\numberwithin{equation}{section}							
\newcommand*{\N}{\mathbb{N}}
\newcommand*{\R}{\mathbb{R}}
\newcommand*{\C}{\mathbb{C}}
\renewcommand*{\epsilon}{\varepsilon}
\renewcommand*{\phi}{\varphi}
\renewcommand*{\kappa}{\varkappa}
\let\temptheta\theta											
\let\theta\vartheta											
\let\vartheta\temptheta										
\let\temprho\rho												
\let\rho\varrho												
\let\varrho\temprho											
\renewcommand*{\tilde}{\widetilde}
\renewcommand*{\hat}{\widehat}
\renewcommand*{\bar}{\overline}
\renewcommand*{\emptyset}{\varnothing}
\renewcommand*{\angle}{\sphericalangle}
\renewcommand*{\d}{\partial}
\renewcommand*{\div}{\mathrm{div}}
\newcommand*{\entspr}{\mathrel{\widehat{=}}}
\newcommand*{\dH}{\, d\mathcal{H}}
\newcommand*{\skp}[2]{\left\langle #1,#2 \right\rangle}
\newcommand*{\norm}[1]{\left\|#1\right\|}
\newcommand*{\mint}{-\!\!\!\!\!\!\int}
\DeclareMathOperator{\sgn}{sgn}
\DeclareMathOperator{\im}{Im}
\DeclareMathOperator{\Vol}{Vol}
\DeclareMathOperator{\id}{Id}
\DeclareMathOperator{\ord}{ord}
\DeclareMathOperator{\WE}{WE}
\theoremstyle{plain}											
\newtheorem{defi}{Definition}[section]
\newtheorem{rem}[defi]{Remark}
\theoremstyle{plain}											
\newtheorem{thm}[defi]{Theorem}
\newtheorem{lemma}[defi]{Lemma}
\newtheorem{cor}[defi]{Corollary}
\theoremstyle{nonumberplain}								
\newtheorem{proof}{\underline{Proof}}
\begin{document}

\begin{titlepage}
\title{Local Well-Posedness for Volume-Preserving Mean Curvature and Willmore Flows with Line Tension}
\author{Helmut Abels\footnote{Fakult\"at f\"ur Mathematik,  
Universit\"at Regensburg,
93040 Regensburg,
Germany, e-mail: {\sf helmut.abels@mathematik.uni-regensburg.de}}\ \ 
Harald Garcke\footnote{Fakult\"at f\"ur Mathematik,  
Universit\"at Regensburg,
93040 Regensburg,
Germany, e-mail: {\sf harald.garcke@mathematik.uni-regensburg.de}}
and Lars M\"uller\footnote{Fakult\"at f\"ur Mathematik,  
Universit\"at Regensburg,
93040 Regensburg,
Germany}}
\end{titlepage}
\maketitle

\begin{abstract}
We show the short-time existence and uniqueness of solutions for the motion of an evolving hypersurface in contact with a solid container driven by the volume-preserving mean curvature flow (MCF) taking line tension effects on the boundary into account. Difficulties arise due to dynamic boundary conditions and due to the contact angle and the non-local nature of the resulting second order, nonlinear PDE. In addition, we prove the same result for the Willmore flow with line tension, which results in a nonlinear PDE of fourth order. For both flows we will use a Hanzawa transformation to write the flows as graphs over a fixed reference hypersurface.
\end{abstract}

\small \textbf{Keywords:} Mean curvature flow, Willmore flow, well-posedness, dynamic boundary conditions, line energy, geodesic curvature flow, maximal regularity

\small \textbf{AMS subject classifcations:} 53C44, 35K35, 35K55

\normalsize

\section{Introduction}\label{sec:Introduction}

This paper is devoted to two-dimensional evolving hypersurfaces $\Gamma$ 
in $\R^3$ that are brought in contact with a solid boundary
and which move under a geometrical flow taking line energy effects at the
boundary into account.
In many physical systems interfaces evolve in order to decrease surface
area or energies involving the curvature of the surface.
If mass is conserved the enclosed volume has to be preserved as for
example in attachment limited kinetics, see \cite{TC95}. If such a
surface gets into contact with some fixed impermeable boundary  $\d
\Omega$
the occurring contact angle is mainly determined by the material constants
which involve the surface energies of the interfaces involved.
But in particular on small length scales a second effect is entering the scenery, namely the line tension (cf. Section 1 of \cite{BLK06}). This effect penalizes long contact curves and forces the drop or bubble to roll off the boundary. Mathematically line tension effects have been studied in the stationary case by Morgan, Taylor and Cook (cf. \cite{Mor94a}, \cite{Mor94b}, \cite{MT91}, \cite{Coo85}).

The geometric evolution law that we want to consider is
\begin{align*}
	V_\Gamma = H_\Gamma - \bar{H},
\end{align*}
which is known as the volume-preserving mean curvature flow (MCF),
see \cite{Hui87}, and is a simplified model for
interface motion under a volume constraint.
 During this motion it is often  unnatural to prescribe the boundary curve or the contact angle since an arbitrary drop or bubble, which is brought in contact with a solid container, will not instantly have a boundary curve or contact angle that is energetically minimal. Instead of doing so, we will impose dynamic boundary conditions to allow the contact angle to change and the boundary curve to move. We will prove in Theorem \ref{thm:ShortTimeExistenceMCF} that for a sufficiently smooth initial droplet there is a small time interval in which we can guarantee that the initial droplet can evolve following the rules of this motion.

Biomembranes, such as the surface of a red blood cell, however, are related to the so called Helfrich energy (cf. \cite{Can70} and \cite{CV12}). The Willmore energy can be viewed as the easiest example of the Helfrich energy. Instead of minimizing their areas these hypersurfaces try to minimize their bending energies, which results in the Willmore flow
\begin{align*}
	V_\Gamma = -\Delta_\Gamma H_\Gamma - \frac{1}{2} H_\Gamma \left(H_\Gamma^2 - 4 K_\Gamma\right).
\end{align*}
Here the motion of a surface proportional to the Laplace-Beltrami operator of the mean curvature plus some lower order curvature related terms. Including the wetting and line tension effect on the boundary we will again show well-posedness for sufficiently short times in Theorem \ref{thm:ShortTimeExistenceWillmore}.

In Section \ref{sec:LinearMCF} we describe the general setting and introduce the notation that will be used. After presenting the preliminaries we will move on to investigate the volume-preserving MCF of an evolving hypersurface with line tension effects on the contact curve. This motion will be governed by nonlinear PDEs of second order, which we will linearize around a fixed reference hypersurface. Section \ref{sec:LocalExistenceMCF} is devoted to the proof of the existence of solutions of the MCF for sufficiently short times. We will achieve this goal by first considering the short-time existence of solutions of the linearized flow and then apply a fixed point argument to prove the same statement for the original nonlinear flow. The non-local nature of the volume-preserving MCF will give rise to some technical difficulties. After the consideration concerning the MCF, we will study the Willmore Flow in the same context in Section \ref{sec:LinearWillmore}. The setting of the evolving hypersurface in contact with a container remains the same as before, but now the rules governing the motion of the hypersurface will be given as the Willmore flow. Again we will include line tension effects and boundary conditions of relaxation type. The resulting nonlinear PDE will be of fourth order, but purely local. The end of the section will be devoted to the linearization of the PDE. Following the same strategy as for the MCF we will prove the short-time existence of solutions for the motion driven by the Willmore flow in Section \ref{sec:LocalExistenceWillmore}.

\section{The volume-preserving MCF and its linearization}\label{sec:LinearMCF}

\subsection{The mean curvature flow}\label{ssec:MCF}

In this section we consider the motion of an evolving hypersurface $\Gamma = (\Gamma(t))_{t \in I}$ in $\R^3$ driven by the volume-preserving mean curvature flow
\begin{align*}
	V_\Gamma(t) = H_\Gamma(t) - \bar{H}(t),
\end{align*}
where $V_\Gamma$ is the normal velocity, $H_\Gamma$ is the mean curvature given as the sum of the principle curvatures and $\bar{H}(t)$ is the mean value of the mean curvature, defined as
\begin{align*}
	\bar{H}(t) := \mint_{\Gamma(t)} H_{\Gamma(t)}(t,p) \dH^2
				  := \frac{1}{\int_{\Gamma(t)}\limits 1 \dH^2} \int_{\Gamma(t)} H_{\Gamma(t)}(t,p) \dH^2.
\end{align*}
Here, $\bar{H}(t)$ is exactly the right choice to make this flow volume preserving. More precisely, by calculating the first variation of the volume functional we see
\begin{align*}
	\frac{d}{dt} \Vol(\Gamma(t)) & = \int_{\Gamma(t)} V_{\Gamma(t)} \dH^2 = \int_{\Gamma(t)} H_{\Gamma(t)} - \bar{H} \dH^2 \\
										  & = \int_{\Gamma(t)}  H_{\Gamma(t)} \dH^2 - \bar{H} \int_{\Gamma(t)} 1 \dH^2 = 0.
\end{align*}

The hypersurface shall evolve inside a container $\Omega$ and remain in contact with the fixed boundary $\d \Omega$. For two parameters $a \in \R$ and $b > 0$ we impose a dynamic boundary condition of relaxation type
\begin{align*}
	v_{\d D}(t) = a + b \kappa_{\d D}(t) + \cos(\alpha(t)),
\end{align*}
where $v_{\d D}$ is the normal boundary velocity of the contact curve, $\kappa_{\d D}$ is its geodesic curvature with respect to $\d \Omega$ and $\cos(\alpha(t)) = \skp{n_\Gamma}{n_D}$ is the contact angle of $\Gamma$ and $D$. The precise assumptions shall be introduced now.

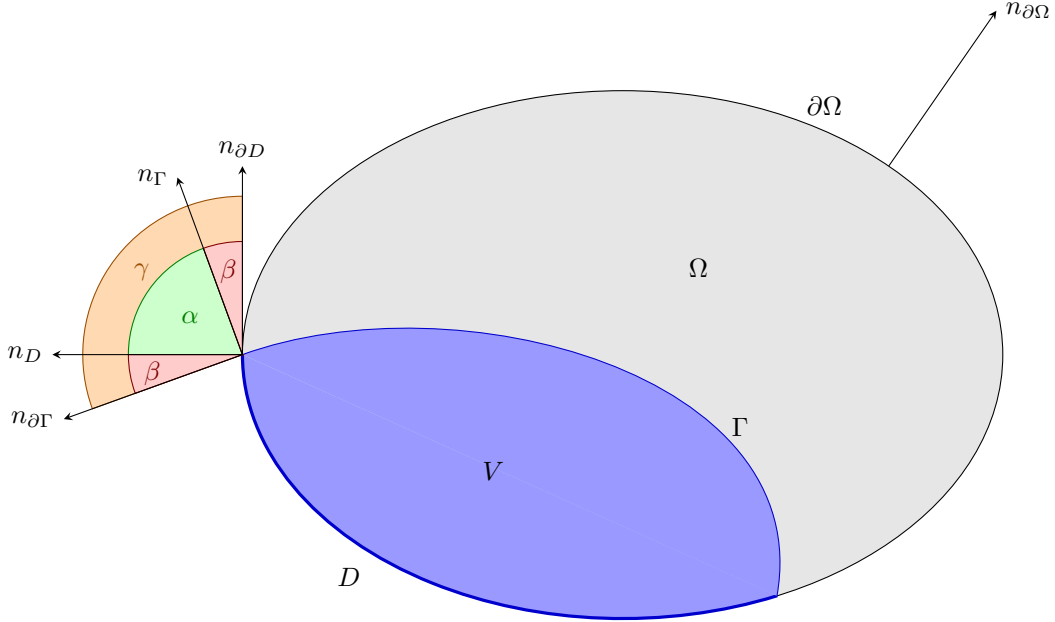
\begin{figure}[htbp]
	\centering
	\begin{tikzpicture}[scale=1,>=stealth]
		\filldraw[fill=black!10!white,draw=black] (0,0) ellipse (5 and 3.5);
		\draw[thin,white] (2.3,3.3) circle (0pt) node[right,black] {$\d \Omega$};
		\draw[thin,black!10!white] (1,1.4) circle (0pt) node[below,black] {$\Omega$};
		\filldraw[fill=blue!40!white,draw=black] (-5,0) arc (180:294:5 and 3.5);
		\draw[very thick,blue!80!black] (-5,0) arc (180:294:5 and 3.5);
		\draw[thin,white] (-3.6,-2.7) circle (0pt) node[below,black] {$D$};
		\filldraw[fill=blue!40!white,draw=blue!80!black] (-5,0) .. controls +(20:3cm) and +(80:3cm) .. (2.03,-3.20) node[near end,above=3pt,black] {$\Gamma$};
		\draw[thin,blue!40!white] (-1.7,-1.3) circle (0pt) node[below,black] {$V$};
		\filldraw[fill=orange!30!white, draw=orange!60!black] (-5,0) -- (-5,2.1) arc (90:200:2.1) -- cycle;
		\draw[thin,white] (-5,0) circle (0pt) node[above=1.1cm,left=1.1cm,color=orange!60!black] {$\gamma$};
		\filldraw[fill=green!20!white, draw=green!50!black] (-5,0) -- +(110:1.5) arc (110:180:1.5) -- cycle;
		\draw[thin,white] (-5,0) circle (0pt) node[above=0.5cm,left=0.45cm,color=green!50!black] {$\alpha$};
		\filldraw[fill=red!20!white, draw=red!50!black] (-5,0) -- (-5,1.5)  arc (90:110:1.5) -- cycle;
		\draw[thin,white] (-5,0) circle (0pt) node[above=1.1cm,left=-0.05cm,color=red!50!black] {$\beta$};
		\filldraw[fill=red!20!white, draw=red!50!black] (-5,0) -- (-6.5,0)  arc (180:200:1.5) -- cycle;
		\draw[thin,white] (-5,0) circle (0pt) node[below=0.25cm,left=0.95cm,color=red!50!black] {$\beta$};
		\draw[->] (-5,0) -- (-5,2.5) node[above] {$n_{\d D}$};
		\draw[->] (-5,0) -- (-7.5,0) node[left] {$n_D$};
		\draw[->] (-5,0) -- +(20:-2.5) node[left] {$n_{\d \Gamma}$};
		\draw[->] (-5,0) -- +(110:2.5) node[left] {$n_\Gamma$};
		\draw[->] (3.5,2.5) -- +(55.5:2.5) node[right] {$n_{\d \Omega}$};
	\end{tikzpicture}
	\caption{General situation and notation}
	\label{fig:Situation}
\end{figure}

Let $\Omega \subseteq \R^3$ be an open, non-empty, connected domain with smooth boundary $\d \Omega$. Furthermore, let $\Gamma \subseteq \Omega$ be a connected, smooth hypersurface with boundary $\d \Gamma$ such that $\Gamma \cup \d \Gamma$ is compact and $\emptyset \neq \d \Gamma \subseteq \d \Omega$. $V \subseteq \Omega$ denotes the region between $\Gamma$ and $\d \Omega$ and $D$ shall be defined as $D := \d V \cap \d \Omega$. In particular, we have $\d D = \d \Gamma$.
For a point $p \in \Gamma$ we denote the exterior normal to $\Gamma$ in $p$ by $n_\Gamma(p)$, where the term ``exterior'' should be understood with respect to $V$. Analogously, for the normal $n_{\d \Omega}(p)$ for $p \in \d \Omega$, which coincides with $n_D(p)$ if $p \in D \subseteq \d \Omega$. Furthermore, for a point $p \in \d \Gamma$ we want to denote by $n_{\d \Gamma}(p)$ and $n_{\d D}(p)$ the outer conormals to $\d \Gamma$ and $\d D$ in $p$. In addition, we define the tangent vector to the curve $\d \Gamma$ by $\vec{\tau}(p) := \frac{c'(t)}{|c'(t)|}$ and its curvature vector by $\vec{\kappa}(p) := \frac{1}{|c'(t)|} \left(\frac{c'(t)}{|c'(t)|}\right)'$, where $c: (t - \epsilon, t + \epsilon) \longrightarrow \d \Gamma$ is a parametrization of $\d \Gamma$ around $p \in \d \Gamma$ with $c(t) = p$. Moreover, we define the angles $\alpha(p) := \angle(n_\Gamma(p), n_D(p))$, $\beta(p) := \angle(n_D(p), n_{\d \Gamma}(p))$ and $\gamma(p) := \angle(n_{\d D}(p),n_{\d \Gamma}(p))$ for $p \in \d \Gamma$. We assume throughout the whole paper
\begin{align}\label{eq:AngleAssumption}
	0 < \alpha(p) < \pi \qquad \text{for all } p \in \d \Gamma,
\end{align}
which will be crucial later on. The whole situation is sketched in Figure \ref{fig:Situation}.

\begin{rem}\label{rem:ONBsAngles}
(i) The two triplets $\left(\vec{\tau}(p), n_{\d \Gamma}(p), n_\Gamma(p)\right)$ and $\left(\vec{\tau}(p), n_D(p), n_{\d D}(p)\right)$ form two right-handed orthonormal bases of $\R^3$ in every point $p \in \d \Gamma$ if we choose the right orientation of $c$. Moreover, we note that $\left\{\vec{\tau}(p), n_{\d \Gamma}(p)\right\}$ and $\left\{\vec{\tau}(p), n_{\d D}(p)\right\}$ are orthonormal bases of $T_p\Gamma$ and $T_pD$, respectively. \\
(ii) We have $\alpha = \frac{\pi}{2} - \beta$ and $\frac{\pi}{2} + \beta = \gamma$, which shows
\begin{align}\label{eq:AngleRelations}
	\skp{n_\Gamma}{n_D} & = \cos(\alpha) \notag \\
	\skp{n_D}{n_{\d \Gamma}} & = \cos(\beta) = \sin(\alpha) \\
	\skp{n_{\d D}}{n_{\d \Gamma}} & = \cos(\gamma) = -\cos(\alpha) \notag,
\end{align}
since all vectors have unit length.
\end{rem}

\begin{figure}[htbp]
	\centering
	\begin{tikzpicture}[scale=1]
		\draw[very thin] (-6:1.5) arc (-6:195:1.5);
		\draw[very thin] (2:2) arc (2:186:2);
		\draw[very thin] (13.5:3) arc (13.5:175:3);
		\draw[very thin] (18:3.5) arc (18:170:3.5);
		\draw[] (0,1.67) --  node[right=15pt,below=-8pt,fill=white]{$\rho(t,q)$} (0,2.5);
		\draw[] (0,1.67) --  (0,2.5);
		\draw[very thick] (8:2.5) arc (8:180:2.5) node[below=2pt] {$\Gamma^*$};
		\draw[thick] (3.05,0.8) .. controls (2,2) and (-3,2) .. (-3.3,0.47) node[below=10pt,left=-5pt] {$\Gamma_\rho(t)$};
		\fill[thick] (0,2.5) node[above] {$q$} circle (2pt);
		\draw[thick] (4,2) arc (-30:-150:5) node[left] {$\d \Omega$};
		\draw[shift={(0.1,-0.5)},<->] (3.35,1.3) arc (-44:-70:5) node[above=10pt,right=30pt] {$w$};
	\end{tikzpicture}
	\caption{The distance function $\rho$}
	\label{fig:Rho}
\end{figure}
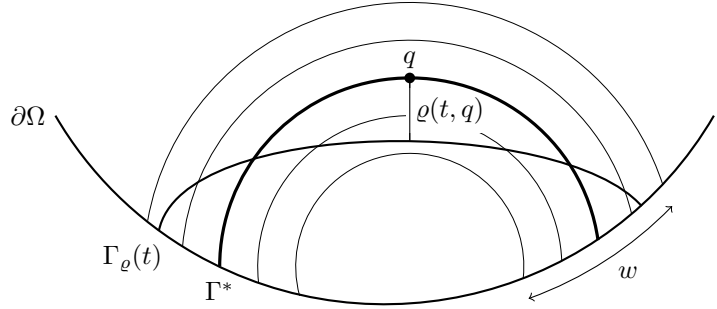

Our first goal is to perform a Hanzawa transformation and write the evolving hypersurface as a family of graphs of a time-dependent distance function $\rho: [0,T] \times \Gamma^* \longrightarrow (-\epsilon_0, \epsilon_0)$ over a fixed reference hypersurface $\Gamma^*$. This reference hypersurface $\Gamma^*$ is supposed to have the same properties as described above. The distance $\rho(t,q)$ of a point $q \in \Gamma^*$ shall be measured in normal direction as indicated in Figure \ref{fig:Rho}, but this is not possible for a boundary point $q \in \d \Gamma^*$. In our situation we need some correction term to ensure that the evolving hypersurface $\Gamma$ neither crosses $\d \Omega$ nor detaches from it.

To this purpose we need to introduce a curvilinear coordinate system $\Psi$ as invented by Vogel \cite{Vog00}. We introduce this coordinate system now because with its help we can write an evolving hypersurface as a graph over the fixed reference surface $\Gamma^*$.

For $q \in \d \Gamma^*$ and $w \in (-\epsilon_0, \epsilon_0)$ with $\epsilon_0 > 0$ sufficiently small there is a smooth function
\begin{align*}
	\tilde{t}: \d \Gamma^* \times (-\epsilon_0, \epsilon_0) \longrightarrow \R: (q,w) \longmapsto \tilde{t}(q,w)
\end{align*}
such that
\begin{align*}
	q + w n_{\Gamma^*}(q) + \tilde{t}(q,w) n_{\d \Gamma^*}(q) \in \d \Omega \qquad \forall \ w \in (-\epsilon_0, \epsilon_0).
\end{align*}
Obviously, $\tilde{t}(q,0) = 0$ and we can extend $\tilde{t}$ smoothly to a function
\begin{align*}
	t: \Gamma^* \times (-\epsilon_0, \epsilon_0) \longrightarrow \R: (q,w) \longmapsto t(q,w)
\end{align*}
such that $t(q,0) = 0$ for all $q \in \Gamma^*$. Next we will use a special coordinate system
\begin{align}\label{eq:CurvilinearCoordinates}
	\Psi: \Gamma^* \times (-\epsilon_0, \epsilon_0) \longrightarrow \Omega: (q,w) \longmapsto \Psi(q,w) := q + w n_{\Gamma^*}(q) + t(q,w) T(q),
\end{align}
where $T: \Gamma^* \longrightarrow \R^3$ is an arbitrary tangential vector field, that coincides with $n_{\d \Gamma^*}$ on $\d \Gamma^*$ and vanishes outside a small neighborhood of $\d \Gamma^*$. By construction this curvilinear coordinate system satisfies $\Psi(q,0) = q$ for all $q \in \Gamma^*$ and $\Psi(q,w) \in \d \Omega$ for all $q \in \d \Gamma^*$ and all $w \in (-\epsilon_0, \epsilon_0)$. Moreover, we can choose $\epsilon_0 > 0$ small enough so that $\Psi$ is a diffeomorphism onto its image. The existence of such a curvilinear coordinate system is guaranteed due to (\ref{eq:AngleAssumption}) which is a result from \cite{Vog00}, where one can also find more technical details concerning this $\Psi$.

We define our evolving hypersurface $\Gamma := (\Gamma_\rho(t))_{t \in I}$ via $\Gamma_\rho(t) := \im(\Psi(\bullet,\rho(t,\bullet)))$ and observe that by our construction of $\Psi$ we have $\Gamma_0(t) = \Gamma^*$ for all $t \in [0,\infty)$. We assume that $\rho$ is smooth enough such that all the upcoming terms are defined.

The precise flow that we want to consider is
\begin{align}\label{eq:Flow1}
	V_\Gamma(\Psi(q,\rho(t,q))) &= H_\Gamma(\Psi(q,\rho(t,q))) - \bar{H}(t) & &\text{in } \Gamma^*, \\ \label{eq:Flow2}
	v_{\d D}(\Psi(q,\rho(t,q))) &= a + b \kappa_{\d D}(\Psi(q,\rho(t,q))) & & \notag \\
										 &+ \skp{n_\Gamma(\Psi(q,\rho(t,q)))}{n_D(\Psi(q,\rho(t,q)))} & &\text{on } \d \Gamma^*.
\end{align}
This flow is motivated by the gradient flow of the energy functional
\begin{align}\label{eq:EnergyFuncVolume}
	E(\Gamma) := \int_\Gamma 1 \dH^2 - a \int_D 1 \dH^2 + b \int_{\d \Gamma} 1 \dH^1,
\end{align}
together with a constant volume constraint, where $a, b \in \R$ with $b > 0$ are given. If we vary the hypersurface by a smooth
\begin{align}\label{eq:Variation}
	\psi\colon \R \times \Gamma \longrightarrow \R^3\colon (t,p) \longmapsto \psi(t,p)
\end{align}
such that $\psi(0,p)=p$ and $\partial_t \psi(0,p)=\zeta(p)$ for all $p\in \Gamma$ for a vector field 
\begin{align}\label{eq:FeasibleSet}
	\zeta \in \mathcal{F}(\Gamma) := \{f \in C^\infty(\Gamma;\R^3) \mid f|_{\d \Gamma} \cdot n_D = 0\}.
\end{align}
and calculate the first variation of (\ref{eq:EnergyFuncVolume}) we end up with
\begin{align}\label{eq:FirstVariation}
	(\delta E(\Gamma))(\zeta) = \int_\Gamma (\lambda - H_\Gamma) (n_\Gamma \cdot \zeta) \dH^2 + \int_{\d \Gamma} (n_{\d \Gamma} - a n_{\d D} - b \vec{\kappa}) \cdot \zeta \dH^1
\end{align}
for some $\lambda \in \R$ and all $\zeta \in \mathcal{F}(\Gamma)$, where we dropped the argument ``$p$'' for a more convenient notation. Searching for necessary conditions that a critical hypersurface of the energy (\ref{eq:EnergyFuncVolume}) has to satisfy we end up with
\begin{align}\label{eq:NecessrayConditions1}
	0 &= H_\Gamma - const. & &\text{on } \Gamma \\ \label{eq:NecessrayConditions2}
	0 &= a + b \kappa_{\d D} + \skp{n_\Gamma}{n_D} & &\text{on } \d \Gamma.
\end{align}
This shows that a stationary hypersurface of the flow (\ref{eq:Flow1})-(\ref{eq:Flow2}) is a critical point for the energy (\ref{eq:EnergyFuncVolume}) and therefore a possible minimizer, for details we refer to \cite{Mue13}.

In the following we want to linearize these two equations around $\rho \equiv 0$. To this end we need some relations concerning the curvilinear coordinate system, which we will derive now. Whenever a star $*$ is added to some of the previous terms, we mean the respective term for the hypersurface.

Let $q \in \d \Gamma^* = \d D^*$ be fixed, $U \subseteq \R^3$ be an open neighborhood of $q$ and assume that $F: \R^3 \longrightarrow \R$ is a smooth function describing $U \cap \d \Omega$ as zero-level-set, i.e.,
\begin{align*}
	U \cap \d \Omega = \{p \in \R^3 \mid F(p) = 0\}.
\end{align*}
Then $\nabla F(q) \perp T_q \d \Omega$ if $\nabla F(a)\neq 0 $ and w.l.o.g. we assume $\frac{\nabla F}{\norm{\nabla F}} = n_{D^*}$ on $D^*$ - otherwise we replace $F$ by $-F$. By the choice of $\Psi$ we obtain for all $q \in \d \Gamma^*$
\begin{align*}
	0 = F(\Psi(q,w)) = F(q + w n_{\Gamma^*}(q) + t(q,w) n_{\d \Gamma^*}(q)) \qquad \forall \ w \in (-\epsilon_0, \epsilon_0).
\end{align*}
Differentiating this equation with respect to $w$ and setting $w = 0$ gives
\begin{align*}
	0 & = \nabla F(\Psi(q,0)) \cdot \d_w \Psi(q,0) = \nabla F(q) \cdot (n_{\Gamma^*}(q) + t_w(q,0) n_{\d \Gamma^*}(q)) \\
	  & = \skp{\norm{\nabla F(q)} n_{D^*}(q)}{n_{\Gamma^*}(q)} + t_w(q,0) \skp{\norm{\nabla F(q)} n_{D^*}(q)}{n_{\d \Gamma^*}(q)}.
\end{align*}
Keeping the assumption (\ref{eq:AngleAssumption}) in mind one can rewrite this identity with the help of (\ref{eq:AngleRelations}) to get
\begin{align*}
	t_w(q,0) = -\frac{\norm{\nabla F(q)} \skp{n_{D^*}(q)}{n_{\Gamma^*}(q)}}{\norm{\nabla F(q)} \skp{n_{D^*}(q)}{n_{\d \Gamma^*}(q)}}
				= -\frac{\cos(\alpha^*(q))}{\sin(\alpha^*(q))} = -\cot(\alpha^*(q)).
\end{align*}
Hence we can write the vector $\d_w \Psi(q,0)$ as
\begin{align}\label{eq:DwPsi}
	\d_w \Psi(q,0) = n_{\Gamma^*}(q) - \cot(\alpha^*(q)) n_{\d \Gamma^*}(q) \qquad \forall \, q \in \d \Gamma^*.
\end{align}
Utilizing (\ref{eq:AngleRelations}) this shows that on the boundary $\d \Gamma^*$ the vector $\d_w \Psi(0)$ has the following coordinates with respect to the two orthonormal bases introduced in Remark \ref{rem:ONBsAngles}:
\begin{align}\label{eq:CoordinatesDwPsi}
	\begin{array}{rllrl}
		\skp{\d_w \Psi(0)}{\vec{\tau}^*}    & = 0,               & \qquad & \skp{\d_w \Psi(0)}{\vec{\tau}^*} & = 0, \\
		\skp{\d_w \Psi(0)}{n_{\d \Gamma^*}} & = -\cot(\alpha^*), & \qquad & \skp{\d_w \Psi(0)}{n_{D^*}}      & = 0, \\
		\skp{\d_w \Psi(0)}{n_{\Gamma^*}}    & = 1,               & \qquad & \skp{\d_w \Psi(0)}{n_{\d D^*}}   & = \frac{1}{\sin(\alpha^*)}.
	\end{array}
\end{align}
We note that the relation $\skp{\d_w \Psi(0)}{n_{\Gamma^*}} = 1$ does also hold in $\Gamma^*$ since
\begin{align}\label{eq:DwPsiNGamma}
	\skp{\d_w \Psi(0)}{n_{\Gamma^*}} = \skp{n_{\Gamma^*} + t_w(w) T}{n_{\Gamma^*}}
		= \underbrace{\skp{n_{\Gamma^*}}{n_{\Gamma^*}}}_{= 1} + t_w(w) \underbrace{\skp{T}{n_{\Gamma^*}}}_{= 0} = 1,
\end{align}
which is a fact used in Lemma \ref{lem:LinearVGamma} below.

\subsection{Linearization of the MCF and the dynamic boundary condition}\label{ssec:LinearMCF}

In this subsection we want to linearize the volume-preserving MCF as given by (\ref{eq:Flow1})-(\ref{eq:Flow2}) around $\rho \equiv 0$, which corresponds to a linearization around $\Gamma^*$. This result will be distributed over several lemmas and uses calculations from \cite{Dep10}.

\begin{lemma}\label{lem:LinearVGamma}
For all $q \in \Gamma^*$ and all $t \in [0,\infty)$ we have
\begin{align*}
	\left.\frac{d}{d\epsilon} V_\Gamma(\Psi(q,\epsilon\rho(t,q)))\right|_{\epsilon=0} = \d_t \rho(t,q).
\end{align*}
\end{lemma}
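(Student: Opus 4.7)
The plan is to expand the left-hand side using the definition of the normal velocity of the evolving graph $\Gamma_\rho(t) = \operatorname{im}(\Psi(\bullet,\rho(t,\bullet)))$, and then exploit the factor of $\epsilon$ that appears in the time derivative of the parametrization to eliminate all but one term at $\epsilon = 0$.

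First I would write $\phi_\epsilon(t,q) := \Psi(q,\epsilon\rho(t,q))$, which parametrizes the rescaled surface $\Gamma_{\epsilon\rho}(t)$. Since the normal velocity of an evolving hypersurface is the normal component of the velocity of any parametrization, I obtain
\begin{align*}
	V_\Gamma(\Psi(q,\epsilon\rho(t,q)))
	= \skp{\d_t \phi_\epsilon(t,q)}{n_{\Gamma_{\epsilon\rho}}(\phi_\epsilon(t,q))}
	= \epsilon\,\d_t\rho(t,q)\,\skp{\d_w\Psi(q,\epsilon\rho(t,q))}{n_{\Gamma_{\epsilon\rho}}(\phi_\epsilon(t,q))},
\end{align*}
because $\d_t\phi_\epsilon(t,q) = \d_w\Psi(q,\epsilon\rho(t,q))\cdot\epsilon\,\d_t\rho(t,q)$.

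Next I differentiate this expression in $\epsilon$ at $\epsilon = 0$. The decisive point is that the scalar prefactor $\epsilon\,\d_t\rho(t,q)$ vanishes at $\epsilon=0$, so the product rule kills the contribution coming from the derivative of the inner product (which involves the $\epsilon$-derivatives of $\d_w\Psi$ and of the normal field $n_{\Gamma_{\epsilon\rho}}$, both of which may be complicated but are finite). Hence only the derivative of the $\epsilon$ factor survives, giving
\begin{align*}
	\left.\frac{d}{d\epsilon}V_\Gamma(\Psi(q,\epsilon\rho(t,q)))\right|_{\epsilon=0}
	= \d_t\rho(t,q)\,\skp{\d_w\Psi(q,0)}{n_{\Gamma^*}(q)}.
\end{align*}

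Finally, I invoke relation~(\ref{eq:DwPsiNGamma}), which asserts that $\skp{\d_w\Psi(q,0)}{n_{\Gamma^*}(q)} = 1$ for every $q \in \Gamma^*$ (both in the interior, where $T$ may vanish, and on $\d\Gamma^*$, where $T = n_{\d\Gamma^*}$ is tangential so the inner product with $n_{\Gamma^*}$ drops out). This yields $\d_t\rho(t,q)$ as claimed. There is no serious obstacle here; the whole argument hinges on recognizing that the factor $\epsilon$ in front of $\d_t\rho$ collapses the product rule to a single term, so the only geometric input needed is the identity $\skp{\d_w\Psi(q,0)}{n_{\Gamma^*}}=1$ already established above.
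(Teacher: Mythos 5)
Your proposal is correct and follows exactly the route the paper intends: the paper's proof of this lemma simply defers to the argument for the normal boundary velocity in Lemma \ref{lem:LinearVDD}, which is precisely your computation — factor out $\epsilon\,\d_t\rho(t,q)$ from $\skp{\d_t\Psi(q,\epsilon\rho)}{n_{\Gamma_{\epsilon\rho}}}$, observe that the product rule leaves only the term where the vanishing prefactor is differentiated, and conclude with the identity $\skp{\d_w\Psi(q,0)}{n_{\Gamma^*}}=1$ from (\ref{eq:DwPsiNGamma}). No gaps.
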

\begin{proof}
This follows in exactly the same manner as in the linearization of the normal boundary velocity in the upcoming lemma (cf. \cite{Mue13} for details).
\end{proof}

\begin{lemma}\label{lem:LinearVDD}
For all $q \in \d \Gamma^*$ and all $t \in [0,\infty)$ we have
\begin{align*}
	\left.\frac{d}{d\epsilon} v_{\d D}(\Psi(q,\epsilon\rho(t,q)))\right|_{\epsilon=0} = \frac{1}{\sin(\alpha^*(q))} \d_t \rho(t,q).
\end{align*}
\end{lemma}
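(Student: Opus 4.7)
My plan is to interpret $v_{\d D}$ as the projection onto $n_{\d D}$ of the velocity of the moving contact curve, then compute this quantity for the one-parameter family of hypersurfaces $\Gamma_{\epsilon\rho}$, and finally differentiate at $\epsilon=0$ using the coordinate values of $\d_w\Psi(q,0)$ established in (\ref{eq:CoordinatesDwPsi}).

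First, I would fix $q \in \d \Gamma^*$ and consider the curve of contact points
\begin{align*}
c_\epsilon(t,q) := \Psi(q, \epsilon \rho(t,q)) \in \d \Gamma_{\epsilon\rho}(t) \subseteq \d \Omega,
\end{align*}
which lies in $\d\Omega$ by the defining property of $\Psi$, namely $\Psi(q,w)\in\d\Omega$ for $q \in \d\Gamma^*$, $w \in (-\epsilon_0,\epsilon_0)$. Since $v_{\d D}$ is by definition the normal boundary velocity of the contact curve as a curve inside $\d\Omega$, we have
\begin{align*}
v_{\d D}(c_\epsilon(t,q)) = \skp{\d_t c_\epsilon(t,q)}{n_{\d D}(c_\epsilon(t,q))}.
\end{align*}
By the chain rule, $\d_t c_\epsilon(t,q) = \epsilon\, \d_t \rho(t,q)\, \d_w \Psi(q,\epsilon\rho(t,q))$, so
\begin{align*}
v_{\d D}(\Psi(q,\epsilon\rho(t,q))) = \epsilon\, \d_t\rho(t,q) \cdot g(\epsilon), \qquad g(\epsilon) := \skp{\d_w\Psi(q,\epsilon\rho(t,q))}{n_{\d D}(\Psi(q,\epsilon\rho(t,q)))}.
\end{align*}

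Next, since $\Psi$ is smooth and $n_{\d D}$ is smooth along the contact curve on $\d\Omega$, the function $g$ is smooth in $\epsilon$. Differentiating the product $\epsilon g(\epsilon)$ at $\epsilon=0$ yields simply $g(0)$, that is,
\begin{align*}
\left.\frac{d}{d\epsilon} v_{\d D}(\Psi(q,\epsilon\rho(t,q)))\right|_{\epsilon=0}
= \d_t\rho(t,q)\, \skp{\d_w\Psi(q,0)}{n_{\d D^*}(q)}.
\end{align*}
Finally, the third line of the right column of (\ref{eq:CoordinatesDwPsi}) gives $\skp{\d_w\Psi(q,0)}{n_{\d D^*}(q)} = 1/\sin(\alpha^*(q))$, which is well-defined thanks to the angle assumption (\ref{eq:AngleAssumption}). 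This concludes the claim.

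The only delicate point is the interpretation: one must recognize that the differentiation variable $\epsilon$ plays the role of turning on the perturbation, whereas the time derivative $\d_t\rho$ comes from the $t$-differentiation of $c_\epsilon$. The explicit factor of $\epsilon$ coming out of the chain rule is what makes the derivative so clean—only the $\epsilon=0$ value of the geometric factor $g$ enters, and this value is precisely the prefactor $1/\sin(\alpha^*)$ pinned down by Vogel's curvilinear coordinate construction.
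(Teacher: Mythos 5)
Your proposal is correct and follows essentially the same route as the paper: both express $v_{\d D}(\Psi(q,\epsilon\rho(t,q)))$ as $\epsilon\,\d_t\rho(t,q)$ times the geometric factor $\skp{\d_w\Psi(q,\epsilon\rho(t,q))}{n_{\d D}(\Psi(q,\epsilon\rho(t,q)))}$, observe via the product rule that only the $\epsilon=0$ value of that factor survives, and evaluate it as $1/\sin(\alpha^*(q))$ from (\ref{eq:CoordinatesDwPsi}). No gaps.
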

\begin{proof}
Since $\Psi(q,\rho(\cdot,q)): (t - \epsilon, t + \epsilon) \longrightarrow \R^3$ is a curve with $\Psi(q,\rho(s,q)) \in \Gamma_\rho(s)$, we observe by the definition of the normal boundary velocity
\begin{align}\label{eq:StructureVDD}
	v_{\d D}(\Psi(q,\rho(t,q))) & = n_{\d D}(\Psi(q,\rho(t,q))) \cdot \frac{d}{dt} \Psi(q,\rho(t,q)) \notag \\
										 & = (n_{\d D}(\Psi(q,\rho(t,q))) \cdot \d_w \Psi(q,\rho(t,q))) \d_t \rho(t,q)
\end{align}
and with the help of the product rule this leads to
\begin{align*}
	\left.\frac{d}{d\epsilon} v_{\d D}(\Psi(q,\epsilon\rho(t,q)))\right|_{\epsilon=0}
		& = \left.\frac{d}{d\epsilon} (n_{\d D}(\Psi(q,\epsilon\rho(t,q))) \cdot \d_w \Psi(q,\epsilon\rho(t,q)))\right|_{\epsilon=0} \underbrace{\left.\d_t \epsilon\rho(t,q)\right|_{\epsilon=0}}_{= 0} \\
		& + \left.(n_{\d D}(\Psi(q,\epsilon\rho(t,q))) \cdot \d_w \Psi(q,\epsilon\rho(t,q)))\right|_{\epsilon=0} \d_t \rho(t,q) \\
		& = (n_{\d D^*}(q) \cdot \d_w \Psi(q,0)) \d_t \rho(t,q) = \frac{1}{\sin(\alpha^*(q))} \d_t \rho(t,q),
\end{align*}
where we used (\ref{eq:CoordinatesDwPsi}) in the last line.
\end{proof}

\begin{lemma}\label{lem:LinearHGamma}
For all $q \in \Gamma^*$ and all $t \in [0,\infty)$ we have
\begin{align*}
	\left.\frac{d}{d\epsilon} H_\Gamma(\Psi(q,\epsilon\rho(t,q)))\right|_{\epsilon=0}
		& = \Delta_{\Gamma^*} \rho(t,q) + |\sigma^*|^2(q) \rho(t,q) \\
		& + \left(\nabla_{\Gamma^*} H_{\Gamma^*}(q) \cdot P\left(\d_w \Psi(q,0)\right)\right) \rho(t,q),
\end{align*}
where $\Delta_{\Gamma^*}$ denotes the Laplace-Beltrami operator and $\nabla_{\Gamma^*}$ the surface gradient of $\Gamma^*$, $|\sigma^*|^2$ is defined as $|\sigma^*|^2 := (\kappa_1^*)^2 + (\kappa_2^*)^2$ with the principal curvatures $\kappa_1^*, \kappa_2^*$ of $\Gamma^*$ and $P$ denotes the orthogonal projection onto the tangent space of $\Gamma^*$.
\end{lemma}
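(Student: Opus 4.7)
The plan is to reduce the computation to the standard first-variation formula for the mean curvature of a hypersurface under a smooth ambient variation, after decomposing the velocity vector of the variation into its components tangential and normal to $\Gamma^*$.

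First I would introduce the variation vector field
\begin{align*}
	X(q) := \left.\frac{d}{d\epsilon}\right|_{\epsilon=0} \Psi(q,\epsilon\rho(t,q)) = \rho(t,q) \, \d_w \Psi(q,0),
\end{align*}
so that the one-parameter family $\epsilon \mapsto \psi_\epsilon(q) := \Psi(q,\epsilon\rho(t,q))$ is a smooth variation of embeddings of $\Gamma^*$ with $\psi_0 = \id$, images $\psi_\epsilon(\Gamma^*) = \Gamma_{\epsilon\rho(t,\cdot)}(t)$, and initial velocity $X$. Identity (\ref{eq:DwPsiNGamma}) yields immediately the pointwise orthogonal decomposition
\begin{align*}
	X = \rho \, n_{\Gamma^*} + \rho \, P\bigl(\d_w \Psi(q,0)\bigr)
\end{align*}
into the scalar normal component $\phi := \rho(t,q)$ and the tangential component $X^T := \rho(t,q) \, P(\d_w \Psi(q,0))$, valid on all of $\Gamma^*$.

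The key analytic ingredient is then the first-variation formula for the mean curvature,
\begin{align*}
	\left.\frac{d}{d\epsilon}\right|_{\epsilon=0} H_{\psi_\epsilon(\Gamma^*)}\bigl(\psi_\epsilon(q)\bigr) = \Delta_{\Gamma^*} \phi + |\sigma^*|^2 \phi + \skp{\nabla_{\Gamma^*} H_{\Gamma^*}(q)}{X^T(q)},
\end{align*}
valid for any smooth variation of $\Gamma^*$ with initial velocity $X = \phi\, n_{\Gamma^*} + X^T$. For a purely normal variation this is the classical identity obtained from the Gauss and Weingarten equations applied to the linearization of the unit normal and the second fundamental form. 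The extra gradient term in the presence of a tangential component reflects that $X^T$ generates, to first order in $\epsilon$, a self-diffeomorphism of $\Gamma^*$; since $H_{\Gamma^*}$ is a geometric scalar, the tracked point $\psi_\epsilon(q)$ slides along $\Gamma^*$ and the pointwise evaluation picks up the gradient contribution $\nabla_{\Gamma^*} H_{\Gamma^*} \cdot X^T$. A clean way to organize the derivation is to factor $\psi_\epsilon$ as a normal flow composed with the flow of a tangential field, differentiate each factor separately, and add the results.

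Substituting $\phi = \rho$ and $X^T = \rho \, P(\d_w \Psi(q,0))$ into the first-variation formula produces exactly the expression claimed in the lemma. The main obstacle is therefore the careful derivation of the general first-variation formula in the presence of a tangential velocity; once it is available (this is the reference made in the excerpt to \cite{Dep10}), the lemma reduces to an algebraic substitution using (\ref{eq:DwPsiNGamma}) and the definition of the projection $P$.
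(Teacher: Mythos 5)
Your proposal is correct and follows essentially the same route as the paper, whose proof is simply a citation of Lemma 3.5 in \cite{Dep10}: that lemma derives precisely the first-variation formula $\Delta_{\Gamma^*}\phi + |\sigma^*|^2\phi + \nabla_{\Gamma^*}H_{\Gamma^*}\cdot X^T$ for a variation with normal part $\phi = \rho$ and tangential part $X^T = \rho\, P(\d_w\Psi(q,0))$, using the decomposition of $\rho\,\d_w\Psi(q,0)$ guaranteed by (\ref{eq:DwPsiNGamma}). The only point the paper adds is that, unlike in \cite{Dep10}, one must retain the transport term $\nabla_{\Gamma^*}H_{\Gamma^*}\cdot X^T$ because $\Gamma^*$ is not assumed stationary, which your argument already does.
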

\begin{proof}
Exactly the same as in Lemma 3.5 of \cite{Dep10}. The difference to the proof there is that we do not use $H_{\Gamma^*} \equiv const.$ in the last step because we did not assume $\Gamma^*$ to be a stationary solution of the energy (\ref{eq:EnergyFuncVolume}).
\end{proof}

\begin{rem}
For a boundary point $q \in \d \Gamma^*$ we express the term $P(\d_w \Psi(q,0))$ using (\ref{eq:CoordinatesDwPsi}) as \begin{align*}
	P(\d_w \Psi(q,0)) = -\cot(\alpha(q)) n_{\d \Gamma^*}(q).
\end{align*}
\end{rem}

\begin{lemma}\label{lem:LinearHMean}
For the linearization of the mean value of the mean curvature we get
\begin{align*}
	\left.\frac{d}{d\epsilon} \bar{H}(\epsilon \rho(t))\right|_{\epsilon=0}
		& = \mint_{\Gamma^*} (\Delta_{\Gamma^*} + |\sigma^*|^2(q) - H_{\Gamma^*}(q)^2 + \bar{H}^* H_{\Gamma^*}(q)) \rho(t,q) \dH^2 \\
		& - \frac{1}{\int_{\Gamma^*}\limits 1 \dH^2} \int_{\d \Gamma^*} \left(H_{\Gamma^*}(q) - \bar{H}^*\right) \cot(\alpha(q)) \rho(t,q) \dH^1.
\end{align*}
\end{lemma}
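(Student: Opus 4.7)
The plan is to write the mean value as a quotient
\[
\bar{H}(\epsilon\rho) = \frac{N(\epsilon)}{D(\epsilon)}, \qquad N(\epsilon) := \int_{\Gamma_{\epsilon\rho}} H_{\Gamma_{\epsilon\rho}}\,d\mathcal{H}^2, \quad D(\epsilon) := \int_{\Gamma_{\epsilon\rho}} 1\,d\mathcal{H}^2,
\]
apply the quotient rule at $\epsilon = 0$, and evaluate $N'(0)$ and $D'(0)$ via the surface transport theorem together with Lemma \ref{lem:LinearHGamma}. The variation vector field of the family $\epsilon \mapsto \Gamma_{\epsilon\rho}$ at $\epsilon = 0$ is
\[
\zeta(q) := \left.\partial_\epsilon \Psi(q,\epsilon\rho(t,q))\right|_{\epsilon=0} = \rho(t,q)\,\d_w\Psi(q,0),
\]
which by (\ref{eq:DwPsiNGamma}) satisfies $\skp{\zeta}{n_{\Gamma^*}} = \rho$ on $\Gamma^*$, and by (\ref{eq:CoordinatesDwPsi}) has tangential part $\rho\,P(\d_w\Psi(q,0))$ in the interior and $\skp{\zeta}{n_{\d\Gamma^*}} = -\cot(\alpha^*)\rho$ on $\d\Gamma^*$.

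First I would compute $D'(0)$ by the first variation of area on a hypersurface with boundary,
\[
D'(0) = \int_{\Gamma^*}\mathrm{div}_{\Gamma^*}(\zeta)\,d\mathcal{H}^2 = -\int_{\Gamma^*} H_{\Gamma^*}\,\rho\,d\mathcal{H}^2 - \int_{\d\Gamma^*}\cot(\alpha^*)\,\rho\,d\mathcal{H}^1,
\]
where the boundary contribution comes from $\skp{\zeta}{n_{\d\Gamma^*}} = -\cot(\alpha^*)\rho$. Next, the transport theorem applied to $N$ yields
\[
N'(0) = \int_{\Gamma^*}\left.\tfrac{d}{d\epsilon} H_\Gamma(\Psi(q,\epsilon\rho))\right|_{\epsilon=0}d\mathcal{H}^2 + \int_{\Gamma^*} H_{\Gamma^*}\,\mathrm{div}_{\Gamma^*}(\zeta)\,d\mathcal{H}^2.
\]
The first integral is supplied by Lemma \ref{lem:LinearHGamma}. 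For the second I would integrate by parts on $\Gamma^*$, splitting $\zeta = \rho\,n_{\Gamma^*} + \rho\,P(\d_w\Psi(q,0))$:
\[
\int_{\Gamma^*} H_{\Gamma^*}\,\mathrm{div}_{\Gamma^*}(\zeta) = -\int_{\Gamma^*}\bigl(\nabla_{\Gamma^*} H_{\Gamma^*}\cdot P(\d_w\Psi)\bigr)\rho - \int_{\Gamma^*} H_{\Gamma^*}^2\,\rho - \int_{\d\Gamma^*} H_{\Gamma^*}\cot(\alpha^*)\rho\,d\mathcal{H}^1.
\]
The first term on the right cancels exactly with the $\nabla_{\Gamma^*} H_{\Gamma^*}\cdot P(\d_w\Psi)\,\rho$ contribution coming from Lemma \ref{lem:LinearHGamma}, leaving
\[
N'(0) = \int_{\Gamma^*}\bigl(\Delta_{\Gamma^*}\rho + |\sigma^*|^2\rho - H_{\Gamma^*}^2\rho\bigr)d\mathcal{H}^2 - \int_{\d\Gamma^*} H_{\Gamma^*}\cot(\alpha^*)\rho\,d\mathcal{H}^1.
\]

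Finally, combining via $\frac{d\bar H}{d\epsilon}\big|_{\epsilon=0} = \frac{N'(0) - \bar H^* D'(0)}{|\Gamma^*|}$ and collecting interior and boundary terms produces precisely the claimed formula, with the boundary coefficient $-(H_{\Gamma^*} - \bar H^*)\cot(\alpha^*)$ arising from the difference of the two boundary contributions. The main technical point is the cancellation of the surface-gradient term from Lemma \ref{lem:LinearHGamma}: since $\Gamma^*$ is not assumed stationary we cannot discard $\nabla_{\Gamma^*} H_{\Gamma^*}$, so the tangential component of $\d_w\Psi$ near $\d\Gamma^*$, which is responsible for the $\cot(\alpha^*)$ boundary terms, must be tracked carefully through the integration by parts; this is exactly what makes the interior $\nabla H\cdot P(\d_w\Psi)$ term disappear while simultaneously generating the $-\cot(\alpha^*)$ boundary integral.
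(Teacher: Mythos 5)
Your proposal is correct and reaches the paper's formula by the same overall scaffold (write $\bar H$ as a quotient, differentiate by the quotient rule, compute the area and curvature integrals via the transport theorem, with the boundary terms coming from $\skp{\zeta}{n_{\d\Gamma^*}}=-\cot(\alpha^*)\rho$). The one place where you genuinely deviate is the evaluation of $N'(0)=\left.\frac{d}{d\epsilon}\int_{\tilde\Gamma(\epsilon)}\tilde H\dH^2\right|_{\epsilon=0}$: the paper invokes Lemma 5.1 of \cite{Dep10}, which delivers the first variation of the total mean curvature directly in the form $\int_{\Gamma^*}(\Delta_{\Gamma^*}V+|\sigma^*|^2V-H_{\Gamma^*}^2V)\dH^2+\int_{\d\Gamma^*}H_{\Gamma^*}v_{\d}\dH^1$, whereas you use the raw transport identity $N'(0)=\int_{\Gamma^*}\bigl(\tfrac{D}{D\epsilon}\tilde H\bigr)\big|_{\epsilon=0}\dH^2+\int_{\Gamma^*}H_{\Gamma^*}\,\mathrm{div}_{\Gamma^*}(\zeta)\dH^2$, feed in Lemma \ref{lem:LinearHGamma} for the material derivative, and integrate by parts. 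Your route is more self-contained and has the virtue of making explicit why the term $(\nabla_{\Gamma^*}H_{\Gamma^*}\cdot P(\d_w\Psi))\rho$ from Lemma \ref{lem:LinearHGamma} — which cannot be discarded since $\Gamma^*$ is not stationary — cancels against the tangential part of $H_{\Gamma^*}\,\mathrm{div}_{\Gamma^*}(\zeta)$ while simultaneously producing the $-\cot(\alpha^*)$ boundary integral; the paper's route is shorter but hides this cancellation inside the cited lemma. Both yield identical expressions for $N'(0)$ and $D'(0)$, and the final assembly is the same.
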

\begin{proof}
Here we rename the surfaces $\Gamma_\rho(t)$ in a way that makes the following calculations easier. We fix a time $t$ and for $\epsilon \in (-\epsilon_0,\epsilon_0)$ we set
\begin{align*}
	\tilde{\Gamma}(\epsilon) := \im(\Psi(\bullet,\epsilon \rho(t,\bullet))).
\end{align*}
Obviously, these new hypersurfaces $\tilde{\Gamma}$ are just renamed versions of the previous $\Gamma_\rho(t)$ because $\tilde{\Gamma}(\epsilon) = \Gamma_{\epsilon \rho}(t)$. But now $\epsilon$ can be considered to be the time parameter of the evolution. We will write $\tilde{H}_{\tilde{\Gamma}(\epsilon)}$, $\tilde{V}_{\tilde{\Gamma}(\epsilon)}$, etc. for the terms of the evolving hypersurface $\left(\tilde{\Gamma}(\epsilon)\right)_{\epsilon \in (-\epsilon_0,\epsilon_0)}$. In particular, an expression related to $\tilde{\Gamma}$ evaluated for $\epsilon = 0$ will result in the respective expression on $\Gamma^*$, because $\tilde{\Gamma}(0) = \Gamma^*$. \\
With this new notation (cf. Lemma 3.5 from \cite{Dep10}) we write
\begin{align*}
	\bar{H}(\epsilon \rho(t)) = \left(\int_{\tilde{\Gamma}(\epsilon)} 1 \dH^2\right)^{-1} \left(\int_{\tilde{\Gamma}(\epsilon)} \tilde{H}_{\tilde{\Gamma}(\epsilon)}(\epsilon,p) \dH^2\right).
\end{align*}
Then we get
\begin{align}\label{eq:LinearHMean}
	\left.\frac{d}{d\epsilon} \bar{H}(\epsilon \rho(t))\right|_{\epsilon=0}
		& = \left(\int_{\Gamma^*} 1 \dH^2\right)^{-1} \left(\left.\frac{d}{d\epsilon} \int_{\tilde{\Gamma}(\epsilon)} \tilde{H}_{\tilde{\Gamma}(\epsilon)}(\epsilon,p) \dH^2\right|_{\epsilon=0}\right) \notag \\
		& - \left(\int_{\Gamma^*} 1 \dH^2\right)^{-1} \left(\left.\frac{d}{d\epsilon} \int_{\tilde{\Gamma}(\epsilon)} 1 \dH^2\right|_{\epsilon=0}\right) \underbrace{\left(\mint_{\Gamma^*} H_{\Gamma^*}(q) \dH^2\right)}_{= \bar{H}(\mathbb{O})}.
\end{align}
With the help of the transport theorem from the appendix of \cite{GW06}, the derivative of the area integral can be written as
\begin{align}\label{eq:DInt1}
	\left.\frac{d}{d\epsilon} \int_{\tilde{\Gamma}(\epsilon)} 1 \dH^2\right|_{\epsilon=0}
		& = -\int_{\tilde{\Gamma}(0)} \tilde{V}_{\tilde{\Gamma}(0)}(0,q) H_{\Gamma^*}(q) \dH^2 + \int_{\d \tilde{\Gamma}(0)} \tilde{v}_{\d \tilde{\Gamma}(0)}(0,q) \dH^1
\end{align}
and we observe
\begin{align*}
	\tilde{V}_{\tilde{\Gamma}(\epsilon)}(\epsilon,\Psi(q,\epsilon\rho(t,q)))
		& = \tilde{n}_{\tilde{\Gamma}(\epsilon)}(\epsilon,\Psi(q,\epsilon\rho(t,q))) \cdot \frac{d}{d\epsilon} \Psi(q,\epsilon\rho(t,q)) \\
		& = \left(\tilde{n}_{\tilde{\Gamma}(\epsilon)}(\epsilon,\Psi(q,\epsilon\rho(t,q))) \cdot \d_w \Psi(q,\epsilon\rho(t,q))\right) \rho(t,q).
\end{align*}
Evaluated in $\epsilon = 0$ we get using (\ref{eq:CoordinatesDwPsi})
\begin{align}\label{eq:VelocityRho}
	\tilde{V}_{\tilde{\Gamma}(0)}(0,q) & = \tilde{V}_{\tilde{\Gamma}(0)}(0,\Psi(q,0)) = (n_{\Gamma^*}(q) \cdot \d_w \Psi(q,0)) \rho(t,q) = \rho(t,q).
\end{align}
Exactly the same calculations with $\tilde{n}_{\d \tilde{\Gamma}}$ instead of $\tilde{n}_{\tilde{\Gamma}}$ show $\tilde{v}_{\d \tilde{\Gamma}(0)}(0,q) = -\cot(\alpha(q)) \rho(t,q)$ for all $q \in \d \Gamma^*$. Hence (\ref{eq:DInt1}) can be rewritten as
\begin{align*}
	\left.\frac{d}{d\epsilon} \int_{\tilde{\Gamma}(\epsilon)} 1 \dH^2\right|_{\epsilon=0} = -\int_{\Gamma^*} H_{\Gamma^*}(q) \rho(t,q) \dH^2 - \int_{\d \Gamma^*} \cot(\alpha(q)) \rho(t,q) \dH^1.
\end{align*}
With the help of the transport theorem of \cite{GW06} and Lemma 5.1 from \cite{Dep10} the derivative of the curvature integral is
\begin{align*}
	&\left.\frac{d}{d\epsilon} \int_{\tilde{\Gamma}(\epsilon)} \tilde{H}_{\tilde{\Gamma}(\epsilon)}(\epsilon,p) \dH^2\right|_{\epsilon=0} \\
	& \hspace*{5mm} = \left.\int_{\tilde{\Gamma}(\epsilon)} \Delta_{\tilde{\Gamma}(\epsilon)} \tilde{V}_{\tilde{\Gamma}(\epsilon)}(\epsilon,p) + |\tilde{\sigma}|^2(\epsilon,p) \tilde{V}_{\tilde{\Gamma}(\epsilon)}(\epsilon,p) \dH^2\right|_{\epsilon=0} \\
	& \hspace*{5mm} - \left.\int_{\tilde{\Gamma}(\epsilon)} \tilde{H}_{\tilde{\Gamma}(\epsilon)}(\epsilon,p)^2 \tilde{V}_{\tilde{\Gamma}(\epsilon)}(\epsilon,p) \dH^2\right|_{\epsilon=0} + \left.\int_{\d \tilde{\Gamma}(\epsilon)} \tilde{H}_{\tilde{\Gamma}(\epsilon)}(\epsilon,p) \tilde{v}_{\d \tilde{\Gamma}(\epsilon)}(\epsilon,p) \dH^1\right|_{\epsilon=0} \\
	& \hspace*{5mm} = \int_{\Gamma^*} \Delta_{\Gamma^*} \rho(t,q) + |\sigma^*|^2(q) \rho(t,q) - H_{\Gamma^*}(q)^2 \rho(t,q) \dH^2 - \int_{\d \Gamma^*} H_{\Gamma^*}(q) \cot(\alpha(q)) \rho(t,q) \dH^1.
\end{align*}
Using these variations we transform equation (\ref{eq:LinearHMean}) as follows
\begin{align*}
	\left.\frac{d}{d\epsilon} \bar{H}(\epsilon \rho(t))\right|_{\epsilon=0}
		& = \mint_{\Gamma^*} \left(\Delta_{\Gamma^*} + |\sigma^*|^2(q) - H_{\Gamma^*}(q)^2 + \bar{H}(\mathbb{O}) H_{\Gamma^*}(q)\right) \rho(t,q) \dH^2 \\
		& + \left(\int_{\Gamma^*} 1 \dH^2\right)^{-1} \left(\int_{\d \Gamma^*} (\bar{H}(\mathbb{O}) - H_{\Gamma^*}(q)) \cot(\alpha(q)) \rho(t,q) \dH^1\right).
\end{align*}
\end{proof}

\begin{lemma}\label{lem:DiffNormedVector}
For a vector-valued $v \in C^1((-\epsilon_0,\epsilon_0))$ with $v(0) \neq 0$ one has
\begin{align*}
	\left.\frac{d}{d\epsilon} \frac{v(\epsilon)}{\norm{v(\epsilon)}}\right|_{\epsilon=0} & = P_{\frac{v(0)}{\norm{v(0)}}}\left(\frac{v'(0)}{\norm{v(0)}}\right),
\end{align*}
where $P_v: \R^n \longrightarrow \R^n: x \longmapsto P_v(x) := x - \skp{x}{v} v$ denotes the projection along $v$.
\end{lemma}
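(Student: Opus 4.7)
The plan is to compute the derivative directly using the quotient rule and the standard formula for the derivative of the Euclidean norm, and then to recognize the resulting expression as the projection described in the statement.

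First I would write $f(\epsilon) := v(\epsilon)/\norm{v(\epsilon)}$ and, since $v(0) \neq 0$, note that by continuity $v(\epsilon) \neq 0$ on some neighborhood of $0$, so $f$ is well-defined and $C^1$ there. An application of the quotient rule gives
\begin{align*}
	f'(\epsilon) = \frac{v'(\epsilon)}{\norm{v(\epsilon)}} - \frac{v(\epsilon)}{\norm{v(\epsilon)}^2} \cdot \frac{d}{d\epsilon}\norm{v(\epsilon)}.
\end{align*}
For the derivative of the norm I would use the identity $\norm{v(\epsilon)}^2 = \skp{v(\epsilon)}{v(\epsilon)}$, which upon differentiation yields $\frac{d}{d\epsilon}\norm{v(\epsilon)} = \skp{v(\epsilon)}{v'(\epsilon)}/\norm{v(\epsilon)}$.

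Substituting this into the expression above and evaluating at $\epsilon = 0$ gives
\begin{align*}
	f'(0) = \frac{v'(0)}{\norm{v(0)}} - \frac{v(0)}{\norm{v(0)}} \skp{\frac{v(0)}{\norm{v(0)}}}{\frac{v'(0)}{\norm{v(0)}}}.
\end{align*}
Setting $u := v(0)/\norm{v(0)}$ and $x := v'(0)/\norm{v(0)}$, the right-hand side is exactly $x - \skp{x}{u} u = P_u(x)$, which is the claimed identity.

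There is no real obstacle here; the computation is routine and the only point requiring a brief justification is the regularity of $f$ near $0$, which follows from $v(0) \neq 0$ and the continuity of $v$. The rest is algebraic manipulation to match the projection formula exactly as stated.
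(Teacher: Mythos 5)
Your computation is correct and is exactly the "elementary calculations" the paper invokes without writing them out: quotient rule plus $\frac{d}{d\epsilon}\norm{v} = \skp{v}{v'}/\norm{v}$, then matching the result to the projection formula. Nothing further is needed.
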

\begin{proof}
Elementary calculations.
\end{proof}

To be able to linearize the boundary condition we need the following lemma.

\begin{lemma}\label{lem:DwPsiRelations}
Let $G \subseteq \bar{\R^2_+}$ be relatively open (i.e., $G$ is open in the subspace topology of $\bar{\R^2_+} \subseteq \R^2$) and let $F: \bar{G} \longrightarrow \Omega$ be the parametrization for a part of $\Gamma^* \cup \d \Gamma^*$ with the properties
\begin{align}\label{eq:Property}
	&\d_1 F(x_0), \d_2 F(x_0) \quad \text{form an orthonormal basis of } \ T_{F(x_0)}\Gamma^* \notag \\
	&\d_1 F(x_0) \times \d_2 F(x_0) = n_{\Gamma^*}(F(x_0)) \\
	&\d_1 F(x_0) = \vec{\tau}^*(F(x_0)) \quad \text{and} \quad \d_2 F(x_0) = n_{\d \Gamma^*}(F(x_0)) \quad \text{on } \ \d \Gamma^* \notag\nomenclature[0.2]{$v \times w$}{Cross product in $\R^n$}
\end{align}
for a fixed $x_0 \in G \cap \d \R^2_+$. Then we have for all $F(x) = q \in \Gamma^*$ and $i \in \{1,2\}$
\begin{align*}
	&\text{(i)}    & \Psi(F(x),0) &= F(x) \qquad \text{ and } \qquad \d_i \Psi(F(x),0) = \d_i F(x)
\end{align*}
and for all $F(x) = q \in \d \Gamma^*$ and $i \in \{1,2\}$
\begin{align*}
	&\text{(ii)}   & \d_w \Psi(F(x),0) &= n_{\Gamma^*}(F(x)) - \cot(\alpha(F(x))) n_{\d \Gamma^*}(F(x)), \\
	&\text{(iii)}  & \skp{\d_i \d_w \Psi(F(x),0)}{n_{\Gamma^*}(F(x))} &= \cot(\alpha(F(x))) \skp{\d_i n_{\Gamma^*}(F(x))}{n_{\d \Gamma^*}(F(x))}
\end{align*}
and for the fixed $F(x_0) = q_0 \in \d \Gamma^*$ and $i \in \{1,2\}$
\begin{align*}
	&\text{(iv)}   & (\d_1 \Psi \times \d_2 \Psi)(q_0,0) &= n_{\Gamma^*}(q_0), \\
	&\text{(v)}    & (\d_w \Psi \times \d_2 \Psi)(q_0,0) &= -\vec{\tau}^*(q_0), \\
	&\text{(vi)}   & (\d_1 \Psi \times \d_w \Psi)(q_0,0) &= -n_{\d \Gamma^*}(q_0) - \cot(\alpha(q_0)) n_{\Gamma^*}(q_0), \\
	&\text{(vii)}  & (\d_1 \Psi \times \d_i \d_w \Psi)(q_0,0) &= \skp{\d_i \d_w \Psi(q_0,0)}{n_{\d \Gamma^*}(q_0)} n_{\Gamma^*}(q_0) \\
	&					& &- \cot(\alpha(q_0)) \skp{\d_i n_{\Gamma^*}(q_0)}{n_{\d \Gamma^*}(q_0)} n_{\d \Gamma^*}(q_0), \\
	&\text{(viii)} & (\d_i \d_w \Psi \times \d_2 \Psi)(q_0,0) &= \skp{\d_i \d_w \Psi(q_0,0)}{\vec{\tau}^*(q_0)} n_{\Gamma^*}(q_0) \\
	&					& &- \cot(\alpha(q_0)) \skp{\d_i n_{\Gamma^*}(q_0)}{n_{\d \Gamma^*}(q_0)} \vec{\tau}^*(q_0).
\end{align*}
\end{lemma}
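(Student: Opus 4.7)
The entire lemma is direct computation from the explicit formula
$\Psi(q,w) = q + w\, n_{\Gamma^*}(q) + t(q,w)\, T(q)$
together with three ingredients already established: $t(q,0) \equiv 0$ on $\Gamma^*$, the boundary identity $t_w(q,0) = -\cot(\alpha^*(q))$ for $q \in \d \Gamma^*$ (the derivation giving (\ref{eq:DwPsi})), and the right-handed ONB relations $\vec{\tau}^* \times n_{\d \Gamma^*} = n_{\Gamma^*}$, $n_{\d \Gamma^*} \times n_{\Gamma^*} = \vec{\tau}^*$, $n_{\Gamma^*} \times \vec{\tau}^* = n_{\d \Gamma^*}$ from Remark \ref{rem:ONBsAngles}. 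I would treat the eight identities in three groups.

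First, (i) follows by substituting $w = 0$ and using $t(q,0) = 0$, then applying the chain rule; (ii) is just (\ref{eq:DwPsi}) restricted to $q = F(x) \in \d \Gamma^*$. For (iv)--(vi), I would substitute the expressions from (i) and (ii), together with the frame choice $\d_1 F(x_0) = \vec{\tau}^*$, $\d_2 F(x_0) = n_{\d \Gamma^*}$ from (\ref{eq:Property}), and expand each cross product bilinearly using the three ONB identities; the only cancellation one needs is $n_{\d \Gamma^*} \times n_{\d \Gamma^*} = 0$.

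The one nontrivial computation is (iii). Starting from $\d_w \Psi(q,w) = n_{\Gamma^*}(q) + t_w(q,w)\, T(q)$, differentiating along $F$ in the direction $\d_i$, and taking the inner product with $n_{\Gamma^*}(F(x))$, the contributions $\skp{\d_i n_{\Gamma^*}}{n_{\Gamma^*}}$ and $\d_i[t_w(F(\cdot),0)] \skp{T}{n_{\Gamma^*}}$ both vanish: the first because $|n_{\Gamma^*}|^2 \equiv 1$ and the second by the global tangentiality $T \perp n_{\Gamma^*}$ on $\Gamma^*$. For the remaining term, differentiating the relation $\skp{T}{n_{\Gamma^*}} \equiv 0$ along $F$ rewrites $\skp{\d_i T}{n_{\Gamma^*}}$ as $-\skp{T}{\d_i n_{\Gamma^*}}$, and at a boundary point $T = n_{\d \Gamma^*}$; combined with $t_w(F(x),0) = -\cot(\alpha^*)$ this produces the stated formula.

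Finally, for (vii) and (viii), I would decompose $\d_i \d_w \Psi(q_0, 0)$ in the ONB $(\vec{\tau}^*, n_{\d \Gamma^*}, n_{\Gamma^*})$, leaving the tangential coefficients as the inner products that appear in the claimed formulas and using (iii) for the $n_{\Gamma^*}$-component. Applying $\vec{\tau}^* \times (\cdot)$ or $(\cdot) \times n_{\d \Gamma^*}$ termwise via the ONB cross-product relations then immediately yields (vii) and (viii). No genuine obstacle arises; the most delicate point, in (iii), is separating the two distinct roles played by $T$: global tangentiality to $\Gamma^*$, which eliminates two of the three terms, and the pointwise boundary identification $T|_{\d \Gamma^*} = n_{\d \Gamma^*}$, which converts the surviving inner product into its final form.
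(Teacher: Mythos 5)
Your proposal is correct and follows essentially the same route as the paper: (i), (ii), (iv)--(vi) by direct substitution and bilinear expansion of the cross products in the right-handed frame, (iii) by a product-rule/orthogonality argument resting on $\skp{\d_w\Psi(0)}{n_{\Gamma^*}}=1$ and the tangentiality of $T$, and (vii)--(viii) by decomposing $\d_i\d_w\Psi$ in the ONB and using (iii) for the normal component. The only cosmetic difference is that in (iii) the paper differentiates the scalar identity $\skp{\d_w\Psi(F(x),0)}{n_{\Gamma^*}(F(x))}=1$ and then expands $\d_i n_{\Gamma^*}$ tangentially, whereas you differentiate the vector formula for $\d_w\Psi$ and contract with $n_{\Gamma^*}$ --- the same computation in a different order.
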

\begin{proof}
(i) The first equation is a property of the curvilinear coordinate system as we constructed it. The second claim we obtain by differentiation. \\
(ii) Equation (\ref{eq:DwPsi}) for $q = F(x) \in \d \Gamma^*$. \\
(iii) Using (ii) and (\ref{eq:DwPsiNGamma}) we get
\begin{align*}
	0 & = \d_i 1 = \d_i \skp{\d_w \Psi(F(x),0)}{n_{\Gamma^*}(F(x))} \\
	  & = \skp{\d_i\d_w \Psi(F(x),0)}{n_{\Gamma^*}(F(x))} + \skp{\d_w \Psi(F(x),0)}{\d_i n_{\Gamma^*}(F(x))}.
\end{align*}
Since $\d_i n_{\Gamma^*}$ lies in the $\vec{\tau}^*$-$n_{\d \Gamma^*}$-plane we see
\begin{align*}
	\skp{\d_i\d_w \Psi(F(x),0)}{n_{\Gamma^*}(F(x))}
		& = -\skp{n_{\d \Gamma^*}(F(x))}{\d_i n_{\Gamma^*}(F(x))} \underbrace{\skp{\d_w \Psi(F(x),0)}{n_{\d \Gamma^*}(F(x))}}_{= -\cot(\alpha(F(x)))} \\
		& - \skp{\vec{\tau}^*(F(x))}{\d_i n_{\Gamma^*}(F(x))} \underbrace{\skp{\d_w \Psi(F(x),0)}{\vec{\tau}^*(F(x))}}_{= 0} \\
		& = \cot(\alpha(F(x))) \skp{n_{\d \Gamma^*}(F(x))}{\d_i n_{\Gamma^*}(F(x))},
\end{align*}
where we used (\ref{eq:CoordinatesDwPsi}). \\
(iv) With (i) and (\ref{eq:Property}) we obtain
\begin{align*}
	(\d_1 \Psi \times \d_2 \Psi)(F(x_0),0) = (\d_1 F \times \d_2 F)(x_0) = n_{\Gamma^*}(F(x_0)).
\end{align*}
(v) By (i), (ii) and (\ref{eq:Property}) we have
\begin{align*}
	(\d_w \Psi \times \d_2 \Psi)(F(x_0),0)
		& = ((n_{\Gamma^*} \circ F) \times \d_2 F)(x_0) - \cot(\alpha(F(x_0))) (\d_2 F \times \d_2 F)(x_0) \\
		& = -\vec{\tau}^*(F(x_0)).
\end{align*}
(vi) Similar to (v), where we use (iv) in addition, we get
\begin{align*}
	(\d_1 \Psi \times \d_w \Psi)(F(x_0),0)
		& = (\d_1 F \times (n_{\Gamma^*} \circ F))(x_0) - \cot(\alpha(F(x_0))) (\d_1 F \times \d_2 F)(x_0) \\
		& = -n_{\d \Gamma^*}(F(x_0)) - \cot(\alpha(F(x_0))) n_{\Gamma^*}(F(x_0)).
\end{align*}
(vii) Using the previous equations one observes
\begin{align*}
	(\d_1 \Psi \times \d_i \d_w \Psi)(F(x_0),0)
		& = (\d_1 F \times (\skp{\d_i \d_w \Psi(F(x_0),0)}{\d_1 F} \d_1 F \\
		& + \skp{\d_i \d_w \Psi(F(x_0),0)}{\d_2 F(x_0)} \d_2 F \\
		& + \skp{\d_i \d_w \Psi(F(x_0),0)}{(n_{\Gamma^*} \circ F)} (n_{\Gamma^*} \circ F)))(x_0) \\
		& = \skp{\d_i \d_w \Psi(F(x_0),0)}{n_{\d \Gamma^*}(F(x_0))} n_{\Gamma^*}(F(x_0)) \\
		& - \cot(\alpha(F(x_0))) \skp{\d_i n_{\Gamma^*}(F(x_0))}{n_{\d \Gamma^*}(F(x_0))} n_{\d \Gamma^*}(F(x_0)).
\end{align*}
(viii) Analogously to (vii).
\end{proof}

\begin{lemma}\label{lem:LinearAngle}
For the linearization of the angle condition we have
\begin{align*}
	& \left.\frac{d}{d\epsilon} \skp{n_\Gamma(t,\Psi(q,\epsilon \rho(t,q)))}{n_D(\Psi(q,\epsilon \rho(t,q)))}\right|_{\epsilon=0} = -\sin(\alpha(q)) (\nabla_{\Gamma^*} \rho(t,q) \cdot n_{\d \Gamma^*}(q)) \\
	& + \cos(\alpha(q)) II_{\Gamma^*}(n_{\d \Gamma^*}(q),n_{\d \Gamma^*}(q)) \rho(t,q)
	  - II_{D^*}(n_{\d D^*}(q),n_{\d D^*}(q)) \rho(t,q),
\end{align*}
where $II_{\Gamma^*}$ and $II_{D^*}$ are the second fundamental forms of $\Gamma^*$ and $D^*$ with respect to the normals $n_{\Gamma^*}$ and $n_{D^*}$, respectively.
\end{lemma}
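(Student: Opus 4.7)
The plan is to work in the boundary parametrization $F\colon\bar G\to\Gamma^*\cup\d\Gamma^*$ provided by Lemma \ref{lem:DwPsiRelations}, centered at an arbitrary $x_0\in G\cap\d\R^2_+$ with $F(x_0)=q_0\in\d\Gamma^*$. Setting $g(x):=\rho(t,F(x))$, the deformed surface $\Gamma_{\epsilon\rho}(t)$ is locally parametrized by $F_\epsilon(x):=\Psi(F(x),\epsilon g(x))$, and its unnormalized outward normal is $N_\epsilon(x):=\d_1F_\epsilon(x)\times\d_2F_\epsilon(x)$. Lemma \ref{lem:DwPsiRelations}(iv) gives $N_0(x_0)=n_{\Gamma^*}(q_0)$, already a unit vector, so Lemma \ref{lem:DiffNormedVector} reduces $\left.\d_\epsilon n_\Gamma\right|_{\epsilon=0}$ at $q_0$ to the tangential projection $P_{n_{\Gamma^*}(q_0)}(N'_0(x_0))$; only the $\vec{\tau}^*(q_0)$- and $n_{\d\Gamma^*}(q_0)$-components need to be tracked.

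Computing $N'_0(x_0)$ is the heart of the argument. The chain rule gives $\left.\d_\epsilon\d_iF_\epsilon\right|_{\epsilon=0}=g(x_0)\,\d_i\d_w\Psi(q_0,0)+(\d_ig)(x_0)\,\d_w\Psi(q_0,0)$, so the product rule applied to $N_\epsilon$ produces exactly the four cross products tabulated in Lemma \ref{lem:DwPsiRelations}(v)--(viii). After substituting these, the $n_{\Gamma^*}$-components are killed by $P_{n_{\Gamma^*}}$, leaving a combination of $\vec{\tau}^*$ and $n_{\d\Gamma^*}$ with coefficients built from $\rho$, the partial derivatives $\d_ig$, and the entries $\skp{\d_in_{\Gamma^*}}{n_{\d\Gamma^*}}$. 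Because $(\d_1F(x_0),\d_2F(x_0))=(\vec{\tau}^*(q_0),n_{\d\Gamma^*}(q_0))$ is an ONB of $T_{q_0}\Gamma^*$, the identity $(\d_1g)\vec{\tau}^*+(\d_2g)n_{\d\Gamma^*}=\nabla_{\Gamma^*}\rho(t,q_0)$ absorbs the gradient terms, and the Weingarten identity $\skp{\d_in_{\Gamma^*}}{n_{\d\Gamma^*}}=-II_{\Gamma^*}(\d_iF,n_{\d\Gamma^*})$ converts the remainder into a contraction of $II_{\Gamma^*}$ with $n_{\d\Gamma^*}$.

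The derivative of $n_D$ is much simpler, because $n_D=n_{\d\Omega}|_D$ is a static field on $\d\Omega$. The chain rule yields $\left.\d_\epsilon n_D(\Psi(q_0,\epsilon\rho))\right|_{\epsilon=0}=(dn_{\d\Omega})_{q_0}(\d_w\Psi(q_0,0))\,\rho(t,q_0)$, and by (\ref{eq:CoordinatesDwPsi}) we have $\d_w\Psi(q_0,0)=\tfrac{1}{\sin\alpha(q_0)}n_{\d D^*}(q_0)\in T_{q_0}\d\Omega$. Decomposing $(dn_{D^*})(n_{\d D^*})$ in the ONB $(\vec{\tau}^*,n_{\d D^*})$ of $T_{q_0}D^*$ writes this in terms of $II_{D^*}(n_{\d D^*},\cdot)$.

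Finally, I assemble the two pieces via the product rule for $\skp{n_\Gamma}{n_D}$ and take the resulting inner products using the expansions $n_{\Gamma^*}=\cos\alpha\,n_{D^*}+\sin\alpha\,n_{\d D^*}$ and $n_{\d\Gamma^*}=\sin\alpha\,n_{D^*}-\cos\alpha\,n_{\d D^*}$, which are immediate from Remark \ref{rem:ONBsAngles}. All $\vec{\tau}^*$-contributions vanish by orthogonality; the factor $\cot\alpha\cdot\sin\alpha=\cos\alpha$ produces the $\cos\alpha\,II_{\Gamma^*}(n_{\d\Gamma^*},n_{\d\Gamma^*})$ term, while $\tfrac{1}{\sin\alpha}\cdot\sin\alpha=1$ produces the $-II_{D^*}(n_{\d D^*},n_{\d D^*})$ term; the tangential gradient contributes only through its $n_{\d\Gamma^*}$-component, yielding $-\sin\alpha\,\skp{\nabla_{\Gamma^*}\rho}{n_{\d\Gamma^*}}$. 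The main obstacle I expect is the bookkeeping in step two: matching the four product-rule terms to the identities of Lemma \ref{lem:DwPsiRelations}(v)--(viii) and verifying that every $n_{\Gamma^*}$-component is correctly discarded by the projection.
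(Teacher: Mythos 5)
Your proposal is correct and follows essentially the same route as the paper's proof: the same product-rule split into the $n_\Gamma$- and $n_D$-derivatives, the same use of Lemma \ref{lem:DiffNormedVector} together with the cross-product identities of Lemma \ref{lem:DwPsiRelations}(iv)--(viii) for the unnormalized normal, and the same final assembly via the decompositions of $n_{\Gamma^*}$ and $n_{\d\Gamma^*}$ in the $(n_{D^*},n_{\d D^*})$-frame. The sign and factor checks ($\cot\alpha\cdot\sin\alpha=\cos\alpha$, $\tfrac{1}{\sin\alpha}\cdot\sin\alpha=1$) match the paper exactly.
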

\begin{proof}
By the product rule we get
\begin{align}\label{eq:LinearAngleProd}
	\left.\frac{d}{d\epsilon} n_\Gamma(\Psi(q,\epsilon \rho(t,q))) \cdot n_D(\Psi(q,\epsilon \rho(t,q)))\right|_{\epsilon=0}
		& = \left.\frac{d}{d\epsilon} n_\Gamma(\Psi(q,\epsilon \rho(t,q)))\right|_{\epsilon=0} \cdot n_{D^*}(q) \notag \\
		& + n_{\Gamma^*}(q) \cdot \left.\frac{d}{d\epsilon} \underbrace{n_D(\Psi(q,\epsilon \rho(t,q)))}_{=: (3)}\right|_{\epsilon=0}
\end{align}
and the normal can be written as
\begin{align*}
	n_\Gamma(\Psi(q,\epsilon \rho(t,q))) = \frac{\d_1 (\Psi(q,\epsilon \rho(t,q))) \times \d_2 (\Psi(q,\epsilon \rho(t,q)))}{\norm{\d_1 (\Psi(q,\epsilon \rho(t,q))) \times \d_2 (\Psi(q,\epsilon \rho(t,q)))}}.
\end{align*}
For the vector
\begin{align*}
	v(\epsilon) & := \d_1 (\Psi(q,\epsilon \rho(t,q))) \times \d_2 (\Psi(q,\epsilon \rho(t,q))) \\
					& = ((\d_1 \Psi)(q,\epsilon \rho(t,q)) + (\d_w \Psi)(q,\epsilon \rho(t,q)) \epsilon \d_1 \rho(t,q)) \\
					& \times ((\d_2 \Psi)(q,\epsilon \rho(t,q)) + (\d_w \Psi)(q,\epsilon \rho(t,q)) \epsilon \d_2 \rho(t,q)) \\
					& = \underbrace{(\d_1 \Psi \times \d_2 \Psi)(q,\epsilon \rho(t,q))}_{=: (1)} \\
					& + \underbrace{(\d_w \Psi \times \d_2 \Psi)(q,\epsilon \rho(t,q)) \epsilon \d_1 \rho(t,q) + (\d_1 \Psi \times \d_w \Psi)(q,\epsilon \rho(t,q)) \epsilon \d_2 \rho(t,q)}_{=: (2)}
\end{align*}
we get using Lemma \ref{lem:DiffNormedVector} and Lemma \ref{lem:DwPsiRelations}(iv)
\begin{align}\label{eq:LinearNormal}
	\left.\frac{d}{d\epsilon} n_\Gamma(\Psi(q,\epsilon \rho(t,q)))\right|_{\epsilon=0} = P_{\frac{v(0)}{\norm{v(0)}}}\left(\frac{v'(0)}{\norm{v(0)}}\right) = P_{n_{\Gamma^*}(q)}(v'(0)).
\end{align}
Inspired by \cite{Dep10} we decompose the term $v'(0)$ as
\begin{align}\label{eq:NormalDecomp}
	\left.\frac{d}{d\epsilon} v(\epsilon)\right|_{\epsilon=0} = \left.\frac{d}{d\epsilon} (1)\right|_{\epsilon=0} + \left.\frac{d}{d\epsilon} (2)\right|_{\epsilon=0}
\end{align}
and consider the terms (1), (2) and (3) separately. With Lemma \ref{lem:DwPsiRelations}(vii)+(viii) we first observe
\begin{align*}
	\left.\frac{d}{d\epsilon} (1)\right|_{\epsilon=0} & = \div_{\Gamma^*}(\d_w \Psi(q,0)) \rho(t,q) n_{\Gamma^*}(q) \\
		& + \cot(\alpha(q)) (II_{\Gamma^*}(\vec{\tau}^*(q),n_{\d \Gamma^*}(q)) \vec{\tau}^*(q) + II_{\Gamma^*}(n_{\d \Gamma^*}(q),n_{\d \Gamma^*}(q)) n_{\d \Gamma^*}(q)) \rho(t,q).
\end{align*}
For the second term we obtain with Lemma \ref{lem:DwPsiRelations}(v)+(vi)
\begin{align*}
	\left.\frac{d}{d\epsilon} (2)\right|_{\epsilon=0} = -\nabla_{\Gamma^*} \rho(t,q) + \skp{\d_w \Psi(q,0)}{\nabla_{\Gamma^*} \rho(t,q)} n_{\Gamma^*}(q).
\end{align*}
Inserting this into (\ref{eq:NormalDecomp}) we get
\begin{align*}
	\left.\frac{d}{d\epsilon} v(\epsilon)\right|_{\epsilon=0} & = -\nabla_{\Gamma^*} \rho(t,q) + \div_{\Gamma^*}(\d_w \Psi(q,0) \rho(t,q)) n_{\Gamma^*}(q) \\
		& + \cot(\alpha(q)) (II_{\Gamma^*}(\vec{\tau}^*(q),n_{\d \Gamma^*}(q)) \vec{\tau}^*(q) + II_{\Gamma^*}(n_{\d \Gamma^*}(q),n_{\d \Gamma^*}(q)) n_{\d \Gamma^*}(q)) \rho(t,q).
\end{align*}
Due to (\ref{eq:LinearNormal}) we have to project this vector along the normal $n_{\Gamma^*}$ and multiplying with $n_{D^*}(q)$ this leads to
\begin{align}\label{eq:LinearAnglePart1}
	\left.\frac{d}{d\epsilon} n_\Gamma(\Psi(q,\epsilon \rho(t,q)))\right|_{\epsilon=0} \cdot n_{D^*}(q)
		& = -\sin(\alpha(q)) (\nabla_{\Gamma^*} \rho(t,q) \cdot n_{\d \Gamma^*}(q)) \notag \\
		& + \cos(\alpha(q)) II_{\Gamma^*}(n_{\d \Gamma^*}(q),n_{\d \Gamma^*}(q)) \rho(t,q).
\end{align}
Finally for part (3) we use the curve
\begin{align*}
	c: [0, \epsilon_0) \longrightarrow \Omega: \epsilon \longmapsto c(\epsilon) := \Psi(q,\epsilon\rho(t,q)).
\end{align*}
Then we have $c(\epsilon) \in \d \Omega$ for all $\epsilon \geq 0$, $c(0) = q$ and $c'(0) = \d_w \Psi(q,0) \rho(t,q)$. Since the vector $\d_w \Psi(q,0) = \frac{1}{\sin(\alpha(q))} n_{\d D^*}(q) \in T_qD^*$ and due to (\ref{eq:CoordinatesDwPsi}) we get
\begin{align*}
	\left.\frac{d}{d\epsilon} (3)\right|_{\epsilon=0}
		& = \left.\frac{d}{d\epsilon} n_D(\Psi(q,\epsilon \rho(t,q)))\right|_{\epsilon=0} = \d_{\d_w \Psi(q,0) \rho(t,q)} n_{D^*}(q) \\
		& = \d_{\frac{1}{\sin(\alpha(q))} n_{\d D^*}(q)} n_{D^*}(q) \rho(t,q) = \frac{1}{\sin(\alpha(q))} \d_{n_{\d D^*}(q)} n_{D^*}(q) \rho(t,q).
\end{align*}
Therefore we obtain
\begin{align*}
	n_{\Gamma^*}(q) \cdot \left.\frac{d}{d\epsilon} (3)\right|_{\epsilon=0} = \frac{1}{\sin(\alpha(q))} \skp{n_{\Gamma^*}(q)}{\d_{n_{\d D^*}(q)} n_{D^*}(q)} \rho(t,q).
\end{align*}
Writing $n_{\Gamma^*} = \sin(\alpha) n_{\d D^*} + \cos(\alpha) n_{D^*}$ and considering the fact that $\d_{n_{\d D^*}(q)} n_{D^*}(q)$ lies in the $n_{\d D^*}$-$\vec{\tau}^*$-plane shows
\begin{align*}
	\skp{n_{\Gamma^*}}{\d_{n_{\d D^*}} n_{D^*}} = -\sin(\alpha) II_{D^*}(n_{\d D^*},n_{\d D^*}).
\end{align*}
In combination this leads to
\begin{align}\label{eq:LinearAnglePart2}
	n_{\Gamma^*}(q) \cdot \left.\frac{d}{d\epsilon} (3)\right|_{\epsilon=0} = -II_{D^*}(n_{\d D^*}(q),n_{\d D^*}(q)) \rho(t,q).
\end{align}
Coupling (\ref{eq:LinearAnglePart1})-(\ref{eq:LinearAnglePart2}) as in (\ref{eq:LinearAngleProd}) we finally arrive at
\begin{align*}
	& \left.\frac{d}{d\epsilon} \skp{n_\Gamma(t,\Psi(q,\epsilon \rho(t,q)))}{n_D(\Psi(q,\epsilon \rho(t,q)))}\right|_{\epsilon=0} = -\sin(\alpha(q)) (\nabla_{\Gamma^*} \rho(t,q) \cdot n_{\d \Gamma^*}(q)) \\
	& + \cos(\alpha(q)) II_{\Gamma^*}(n_{\d \Gamma^*}(q),n_{\d \Gamma^*}(q)) \rho(t,q) - II_{D^*}(n_{\d D^*}(q),n_{\d D^*}(q)) \rho(t,q),
\end{align*}
which is the desired statement.
\end{proof}

\begin{lemma}\label{lem:LinearGeodesicCurvature}
For the linearization of the geodesic curvature we have
\begin{align*}
	\left.\frac{d}{d\epsilon} \kappa_{\d D}(\Psi(q,\epsilon \rho(t,q)))\right|_{\epsilon=0}
		& = \rho_{\sigma\sigma}(t,q) + \kappa_{D^*}(q) II_{D^*}(n_{\d D^*}(q),n_{\d D^*}(q)) \rho(t,q) \\
		& - \kappa_{\d D^*}(q) \skp{\vec{\tau}^*(q)}{(n_{\d D^*})_\sigma(q)} \rho(t,q) \\
		& - \skp{n_{\d D^*}(q)}{(n_{D^*})_\sigma(q)}^2 \rho(t,q),
\end{align*}
where $II_{D^*}$ is the second fundamental form of $D^*$ with respect to the normal $n_{D^*}$, $\kappa_{D^*}$ is the normal curvature of $\d \Gamma^*$ in $D^*$ defined as $\kappa_{D^*}(q) := \skp{\vec{\kappa}^*(q)}{n_{D^*}(q)}$ and $\sigma$ denotes the arc-length-parameter of $\d \Gamma^* = \d D^*$.
\end{lemma}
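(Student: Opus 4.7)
I would prove the lemma by direct computation, analogous to the proof of Lemma \ref{lem:LinearAngle}. Parametrize the common boundary curve $\d \Gamma^*=\d D^*$ by arc length via a smooth $c\colon J\to \d \Gamma^*$ with $c'(\sigma)=\vec\tau^*(c(\sigma))$, and introduce the one-parameter family of perturbed curves
\[
\gamma_\epsilon(\sigma):=\Psi(c(\sigma),\epsilon\rho(t,c(\sigma))),
\]
which by the construction of $\Psi$ in (\ref{eq:CurvilinearCoordinates}) lie on $\d\Omega$ and parametrize $\d D_{\epsilon\rho}(t)\subseteq D^*$. Because $\gamma_\epsilon'\perp n_{\d D}(\gamma_\epsilon)$, the parametrization-free formula for the curvature vector gives
\[
\kappa_{\d D}(\gamma_\epsilon(\sigma))=\frac{\gamma_\epsilon''(\sigma)\cdot n_{\d D}(\gamma_\epsilon(\sigma))}{\norm{\gamma_\epsilon'(\sigma)}^{2}},
\]
and this is the expression I would differentiate in $\epsilon$ at $\epsilon=0$.

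Three ingredients are needed. First, the infinitesimal displacement: by Lemma \ref{lem:DwPsiRelations}(ii) and the basis relations of Remark \ref{rem:ONBsAngles}, $\frac{d}{d\epsilon}\gamma_\epsilon|_{0}=\rho\,\d_w\Psi(c,0)=(\rho/\sin\alpha^*)\,n_{\d D^*}$, i.e.\ purely in the conormal direction of $\d D^*$ within $D^*$. Second, its $\sigma$-derivatives, obtained by the chain rule and $\Psi(q,0)=q$:
\[
\frac{d}{d\epsilon}\gamma_\epsilon'\Big|_{0}=\rho_\sigma\,\d_w\Psi(c,0)+\rho\,\d_\sigma\d_w\Psi(c,0),
\]
\[
\frac{d}{d\epsilon}\gamma_\epsilon''\Big|_{0}=\rho_{\sigma\sigma}\,\d_w\Psi(c,0)+2\rho_\sigma\,\d_\sigma\d_w\Psi(c,0)+\rho\,\d_\sigma^{2}\d_w\Psi(c,0),
\]
together with $\gamma_0=c$, $\gamma_0'=\vec\tau^*$, $\gamma_0''=\vec\kappa^*_{\d\Gamma^*}$. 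Third, the variation of the conormal, $\frac{d}{d\epsilon}n_{\d D}(\gamma_\epsilon)|_0=(\rho/\sin\alpha^*)\,\d_{n_{\d D^*}}n_{\d D^*}$, whose components in the orthonormal basis $(\vec\tau^*,n_{\d D^*},n_{D^*})$ are controlled by the second fundamental form $II_{D^*}$ via $\d_{n_{\d D^*}}n_{D^*}\cdot n_{\d D^*}=-II_{D^*}(n_{\d D^*},n_{\d D^*})$ and by the Darboux-frame twist $\skp{n_{\d D^*}}{(n_{D^*})_\sigma}$.

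Substituting these three items into the differentiated quotient and collecting produces the four summands of the claim: the $\rho_{\sigma\sigma}$-term from $\frac{d}{d\epsilon}\gamma_\epsilon''|_0\cdot n_{\d D^*}$; the term $\kappa_{D^*}II_{D^*}(n_{\d D^*},n_{\d D^*})\rho$ from contracting $\gamma_0''=\vec\kappa^*_{\d\Gamma^*}$, decomposed as $\kappa_{\d D^*}n_{\d D^*}+\kappa_{D^*}n_{D^*}$, with $\frac{d}{d\epsilon}n_{\d D}|_0$; the term $-\kappa_{\d D^*}\skp{\vec\tau^*}{(n_{\d D^*})_\sigma}\rho$ from $\frac{d}{d\epsilon}\norm{\gamma_\epsilon'}^{-2}|_0=-2\vec\tau^*\cdot \frac{d}{d\epsilon}\gamma_\epsilon'|_0$, which by $\vec\tau^*\cdot\d_w\Psi(c,0)=0$ and differentiation in $\sigma$ reduces to $-\vec\kappa^*_{\d\Gamma^*}\cdot\d_w\Psi(c,0)$; and the term $-\skp{n_{\d D^*}}{(n_{D^*})_\sigma}^{2}\rho$ from the remaining contribution $\rho\,\d_\sigma^{2}\d_w\Psi(c,0)\cdot n_{\d D^*}$ after applying Lemma \ref{lem:DwPsiRelations}(iii) and the Weingarten identities to eliminate the unknown normal second derivatives of $\Psi$. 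The principal obstacle is bookkeeping rather than conceptual: numerous contributions appear from iterated product and chain rules, and reducing them to the compact geometric form stated in the lemma requires a systematic decomposition in the frame $(\vec\tau^*,n_{\d D^*},n_{D^*})$ together with careful use of Lemma \ref{lem:DwPsiRelations}. Once this organization is in place the identification of each of the four geometric summands becomes routine.
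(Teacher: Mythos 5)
Your strategy is genuinely different from the paper's: you differentiate the pointwise formula $\kappa_{\d D}(\gamma_\epsilon)=\gamma_\epsilon''\cdot n_{\d D}(\gamma_\epsilon)/\norm{\gamma_\epsilon'}^2$ directly, whereas the paper works with the weak formulation (\ref{eq:WeakFormulation}) of the Darboux decomposition of the curvature vector, differentiates under the integral, tests with $\vec{\eta}=\xi\, n_{\d D^*}$, integrates by parts and invokes the fundamental lemma of the calculus of variations. Your route is admissible, and with the variation field $\zeta=\rho\, n_{\d D^*}$ the three contributions you isolate do recombine into the four stated summands: $\zeta_{\sigma\sigma}\cdot n_{\d D^*}$ gives $\rho_{\sigma\sigma}-\kappa_{\d D^*}^2\rho-\skp{n_{\d D^*}}{(n_{D^*})_\sigma}^2\rho$, the variation of the conormal contracted with $\vec{\kappa}^*=\kappa_{\d D^*}n_{\d D^*}+\kappa_{D^*}n_{D^*}$ gives $\kappa_{D^*}\,II_{D^*}(n_{\d D^*},n_{\d D^*})\rho$, and the variation of $\norm{\gamma_\epsilon'}^{-2}$ gives $2\kappa_{\d D^*}^2\rho$; the sum is the claimed expression because $\skp{\vec{\tau}^*}{(n_{\d D^*})_\sigma}=-\kappa_{\d D^*}$. (Your term-by-term attribution is therefore slightly off -- the net $+\kappa_{\d D^*}^2\rho$ is split between the $\gamma''$- and the $\norm{\gamma'}^{-2}$-variations -- but that is the bookkeeping you already flag.)

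The genuine gap is the normalization of the variation field. You correctly identify the displacement induced by $\Psi$ as $\frac{d}{d\epsilon}\gamma_\epsilon|_0=\rho\,\d_w\Psi(c,0)=(\rho/\sin\alpha^*)\,n_{\d D^*}$, but then assert that the computation yields the four summands with plain $\rho$. The first variation is linear in the variation field and the field enters through its $\sigma$-derivatives, so carrying your displacement through produces the stated formula with $\rho$ replaced by $\rho/\sin\alpha^*$ throughout -- in particular $(\rho/\sin\alpha^*)_{\sigma\sigma}$ in place of $\rho_{\sigma\sigma}$ -- and these differ by first- and zero-order terms in $\rho$ involving $\d_\sigma\alpha^*$ unless $\alpha^*$ is constant along $\d\Gamma^*$. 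The paper's proof takes the variation field to be $\zeta(s)=\rho(t,c(s))\,n_{\d D^*}(c(s))$, i.e. without the factor $1/\sin\alpha^*$. To make your argument land on the stated identity you must either adopt that field or show explicitly where the $\sin\alpha^*$ factors cancel; as written, the concluding step "collecting produces the four summands of the claim" does not follow from the displacement you wrote down.
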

\begin{proof}
First we denote by
\begin{align*}
	c: [0,1] \longrightarrow \d \Gamma^*: s \longmapsto c(s) \quad \text{ with } \quad c(0) = c(1)
\end{align*}
a parametrization of $\d \Gamma^*$ and with
\begin{align*}
	&\tilde{c}: [0,\epsilon_0) \times [0,1] \longrightarrow \R^3: (\epsilon,s) \longmapsto \tilde{c}(\epsilon,s) \\
	&\text{with} \quad \tilde{c}(\epsilon,0) = \tilde{c}(\epsilon,1) \quad \text{and} \quad \tilde{c}(0,s) = c(s) \\
	&\text{and} \quad \tilde{c}_\epsilon(0,s) = \rho(t,c(s)) n_{\d D^*}(c(s)) =: \zeta(s) \quad \forall \ s \in [0,1] 
\end{align*}
we denote a parametrization of $\d \tilde{\Gamma}(\epsilon)$, where $\tilde{\Gamma}(\epsilon)$ is the evolving hypersurface from the proof of Lemma \ref{lem:LinearHMean}. Choosing the orientation of $\tilde{c}$ appropriately, we have
\begin{align*}
	\frac{\tilde{c}_s(\epsilon,s)}{\norm{\tilde{c}_s(\epsilon,s)}} = \vec{\tau}(\epsilon,c(s))
	\qquad \forall \ \epsilon \in [0,\epsilon_0) \ \forall \ s \in [0,1]
\end{align*}
and particularly
\begin{align*}
	\frac{c_s(s)}{\norm{c_s(s)}} = \vec{\tau}^*(c(s)) \qquad \forall \ s \in [0,1].
\end{align*}
Therefore we see
\begin{align*}
	\frac{1}{\norm{\tilde{c}_s(\epsilon,s)}} \left(\frac{\tilde{c}_s(\epsilon,s)}{\norm{\tilde{c}_s(\epsilon,s)}}\right)_s
		= \kappa_{\d \tilde{D}(\epsilon)}(\epsilon,c(s)) n_{\d \tilde{D}(\epsilon)}(\epsilon,c(s))
		+ \kappa_{\tilde{D}(\epsilon)}(\epsilon,c(s)) n_{\tilde{D}(\epsilon)}(\epsilon,c(s)).
\end{align*}
The weak formulation of this reads as
\begin{align}\label{eq:WeakFormulation}
	0 = \int_{\d \tilde{\Gamma}(\epsilon)} \frac{\tilde{c}_s}{\norm{\tilde{c}_s}} \cdot \frac{\vec{\eta}_s}{\norm{\tilde{c}_s}} \dH^1 + \int_{\d \tilde{\Gamma}(\epsilon)} \left(\kappa_{\d \tilde{D}(\epsilon)} n_{\d \tilde{D}(\epsilon)} + \kappa_{\tilde{D}(\epsilon)} n_{\tilde{D}(\epsilon)}\right) \cdot \vec{\eta} \dH^1.
\end{align}
Before we can differentiate the equation with respect to $\epsilon$ we need some auxiliary equations, which are all obtained by simple calculations:
\begin{enumerate}
	\item Let $a(\epsilon,s)$ be a quantity smoothly depending on $\epsilon$ and $s$. Then
			\begin{align*}
				& \left.\frac{d}{d\epsilon} \int_{\d \tilde{\Gamma}(\epsilon)} a(\epsilon,s(p)) \dH^1\right|_{\epsilon=0} \\
				& \qquad = \int_{\d \Gamma^*} a_\epsilon(0,s(q)) \dH^1
				  + \int_{\d \Gamma^*} a(0,s(q)) \left(\underbrace{\frac{c_s(s(q))}{\norm{c_s(s(q))}}}_{= \vec{\tau}^*(c(s(q)))} \cdot \frac{\zeta_s(s(q))}{\norm{c_s(s(q))}}\right) \dH^1.
			\end{align*}
			where $s(q)$ is the abbreviation for $s(q) := c^{-1}(q)$ with $q \in \d \Gamma^*$.
	\item If we denote with $\tilde{P}$ the projection onto the $n_{\d D^*}$-$n_{D^*}$-plane, we get by Lemma \ref{lem:DiffNormedVector}(ii)
			\begin{align*}
				\left.\frac{d}{d\epsilon} \frac{\tilde{c}_s(\epsilon,s)}{\norm{\tilde{c}_s(\epsilon,s)}}\right|_{\epsilon=0}
					= P_{\frac{c_s(s)}{\norm{c_s(s)}}}\left(\frac{\tilde{c}_{s\epsilon}(0,s)}{\norm{c_s(s)}}\right)
					= P_{\vec{\tau}^*(c(s))}\left(\frac{\zeta_s(s)}{\norm{c_s(s)}}\right)
					= \tilde{P}\left(\frac{\zeta_s(s)}{\norm{c_s(s)}}\right).
			\end{align*}
	\item Using the second auxiliary calculation we obtain
			\begin{align*}
				\left.\frac{d}{d\epsilon} \int_{\d \tilde{\Gamma}(\epsilon)} \frac{\tilde{c}_s(\epsilon,s(p))}{\norm{\tilde{c}_s(\epsilon,s(p))}} \cdot \frac{\vec{\eta}_s(s(p))}{\norm{\tilde{c}_s(\epsilon,s(p))}} \dH^1\right|_{\epsilon=0} = \int_{\d \Gamma^*} \tilde{P}\left(\frac{\zeta_s(s(q))}{\norm{c_s(s(q))}}\right) \cdot \frac{\vec{\eta}_s(s(q))}{\norm{c_s(s(q))}} \dH^1.
			\end{align*}
\end{enumerate}
Using these auxiliary equations we can differentiate (\ref{eq:WeakFormulation}) with respect to $\epsilon$ and derive
\begin{align*}
	0 & = \int_{\d \Gamma^*} ((\kappa_{\d D^*})_\epsilon n_{\d D^*} + \kappa_{\d D^*} (n_{\d D^*})_\epsilon + (\kappa_{D^*})_\epsilon n_{D^*} + \kappa_{D^*} (n_{D^*})_\epsilon) \cdot \vec{\eta} \dH^1 \\
	  & + \int_{\d \Gamma^*} (\kappa_{\d D^*} n_{\d D^*} + \kappa_{D^*} n_{D^*}) \cdot \vec{\eta} \left(\vec{\tau}^* \cdot \frac{\zeta_s}{\norm{c_s}}\right) \dH^1
		 + \int_{\d \Gamma^*} \tilde{P}\left(\frac{\zeta_s}{\norm{c_s}}\right) \cdot \frac{\vec{\eta}_s}{\norm{c_s}} \dH^1
\end{align*}
for $\epsilon = 0$. Choosing $\vec{\eta}(s) := \xi(s) n_{\d D^*}(c(s))$ with an arbitrary function $\xi: [0,1] \longrightarrow \R$ we get
\begin{align*}
	0 & = \int_{\d \Gamma^*} (\kappa_{\d D^*})_\epsilon \xi + \kappa_{D^*} \xi ((n_{D^*})_\epsilon \cdot n_{\d D^*}) \dH^1 \\
	  & + \int_{\d \Gamma^*} \kappa_{\d D^*} \xi \left(\vec{\tau}^* \cdot \frac{\zeta_s}{\norm{c_s}}\right) \dH^1
	    + \int_{\d \Gamma^*} \tilde{P}\left(\frac{\zeta_s}{\norm{c_s}}\right) \cdot \frac{(\xi n_{\d D^*})_s}{\norm{c_s}} \dH^1.
\end{align*}
Integration by parts yields the rewritten equation
\begin{align}\label{eq:Rewritten}
	\int_{\d \Gamma^*} (\kappa_{\d D^*})_\epsilon \xi \dH^1
		& = \int_{\d \Gamma^*} \left(\tilde{P}\left(\frac{\zeta_s}{\norm{c_s}}\right)\right)_s \cdot \frac{\xi n_{\d D^*}}{\norm{c_s}} \dH^1 \notag \\
		& - \int_{\d \Gamma^*} \kappa_{D^*} \xi ((n_{D^*})_\epsilon \cdot n_{\d D^*}) + \kappa_{\d D^*} \xi \left(\vec{\tau}^* \cdot \frac{\zeta_s}{\norm{c_s}}\right) \dH^1.
\end{align}
We will now investigate the first integrand. Starting with
\begin{align}\label{eq:ZetaS}
	\zeta_s = \rho_s n_{\d D^*} + \rho (n_{\d D^*})_s
\end{align}
we project onto the $n_{\d D^*}$-$n_{D^*}$-plane and differentiate with respect to $s$, which gives
\begin{align*}
	\left(\tilde{P}\left(\frac{\zeta_s}{\norm{c_s}}\right)\right)_s
		& = \left(\frac{\rho_s}{\norm{c_s}}\right)_s n_{\d D^*} + \frac{\rho_s}{\norm{c_s}} (n_{\d D^*})_s
		  + \left(\frac{1}{\norm{c_s}} \rho \skp{(n_{\d D^*})_s}{n_{D^*}}\right)_s n_{D^*} \\
		& + \frac{1}{\norm{c_s}} \rho \skp{(n_{\d D^*})_s}{n_{D^*}} (n_{D^*})_s.
\end{align*}
This and the fact $\skp{(n_{\d D^*})_s}{n_{D^*}} = -\skp{n_{\d D^*}}{(n_{D^*})_s}$ finally lead to
\begin{align*}
	\left(\tilde{P}\left(\frac{\zeta_s}{\norm{c_s}}\right)\right)_s \cdot \frac{\xi n_{\d D^*}}{\norm{c_s}} = \rho_{\sigma\sigma} \xi - \rho \skp{(n_{D^*})_\sigma}{n_{\d D^*}}^2 \xi.
\end{align*}
In addition, (\ref{eq:ZetaS}) shows that the third integrand in (\ref{eq:Rewritten}) reads as
\begin{align*}
	\left(\vec{\tau}^* \cdot \frac{\zeta_s}{\norm{c_s}}\right) = \rho_\sigma \underbrace{(\vec{\tau}^* \cdot n_{\d D^*})}_{= 0} + \frac{\rho}{\norm{c_s}} \skp{\vec{\tau}^*}{(n_{\d D^*})_s} = \skp{\vec{\tau}^*}{(n_{\d D^*})_\sigma} \rho.
\end{align*}
Inserting these two facts into (\ref{eq:Rewritten}) we obtain
\begin{align*}
	\int_{\d \Gamma^*} (\kappa_{\d D^*})_\epsilon \xi \dH^1
		& = \int_{\d \Gamma^*} \rho_{\sigma\sigma} \xi - \rho \skp{(n_{D^*})_\sigma}{n_{\d D^*}}^2 \xi \dH^1 \\
		& - \int_{\d \Gamma^*} \kappa_{D^*} \skp{(n_{D^*})_\epsilon}{n_{\d D^*}} \xi + \kappa_{\d D^*} \rho \skp{\vec{\tau}^*}{(n_{\d D^*})_\sigma} \xi \dH^1.
\end{align*}
Since $\xi$ was chosen arbitrarily, we can again apply the fundamental lemma of the calculus of variation to end up with
\begin{align*}
	(\kappa_{\d D^*})_\epsilon = \rho_{\sigma\sigma} - \skp{(n_{D^*})_\sigma}{n_{\d D^*}}^2 \rho - \kappa_{D^*} \skp{(n_{D^*})_\epsilon}{n_{\d D^*}} - \kappa_{\d D^*} \skp{\vec{\tau}^*}{(n_{\d D^*})_\sigma} \rho.
\end{align*}
We want to have an equation where $\rho$ is contained in every single term, therefore we rewrite the third term as
\begin{align*}
	(n_{D^*})_\epsilon = \d_{\rho n_{\d D^*}} n_{D^*} = \rho \d_{n_{\d D^*}} n_{D^*},
\end{align*}
which leads to
\begin{align*}
	\skp{(n_{D^*})_\epsilon}{n_{\d D^*}} = \skp{\d_{n_{\d D^*}} n_{D^*}}{n_{\d D^*}} \rho = -II_{D^*}(n_{\d D^*},n_{\d D^*}) \rho.
\end{align*}
Finally, we have the desired expression
\begin{align*}
	(\kappa_{\d D^*})_\epsilon = \rho_{\sigma\sigma} - \skp{(n_{D^*})_\sigma}{n_{\d D^*}}^2 \rho + \kappa_{D^*} II_{D^*}(n_{\d D^*},n_{\d D^*}) \rho - \kappa_{\d D^*} \skp{\vec{\tau}^*}{(n_{\d D^*})_\sigma} \rho.
\end{align*}
\end{proof}

Combining the results from Lemma \ref{lem:LinearVGamma} to Lemma \ref{lem:LinearGeodesicCurvature} the linearization of (\ref{eq:Flow1})-(\ref{eq:Flow2}) is given by
\begin{align}\label{eq:LinearFlow1}
	\d_t \rho(t) & = \Delta_{\Gamma^*} \rho(t) + |\sigma^*|^2 \rho(t)
                  + \left(\nabla_{\Gamma^*} H_{\Gamma^*} \cdot P\left(\d_w \Psi(0)\right)\right) \rho(t) \notag \\
                & - \mint_{\Gamma^*} (\Delta_{\Gamma^*} + |\sigma^*|^2 - H_{\Gamma^*}^2 + \bar{H}(\mathbb{O}) H_{\Gamma^*}) \rho(t) \dH^2 \notag \\
                & + \frac{1}{\int_{\Gamma^*}\limits 1 \dH^2} \int_{\d \Gamma^*} \left(H_{\Gamma^*} - \bar{H}(\mathbb{O})\right) \cot(\alpha) \rho(t) \dH^1 + f(t) \hspace*{20mm} \text{ in } [0,T] \times \Gamma^*, \\ \label{eq:LinearFlow2}
	\d_t \rho(t) & = -\sin(\alpha)^2 (n_{\d \Gamma^*} \cdot \nabla_{\Gamma^*} \rho(t))
                  - \sin(\alpha) II_{D^*}(n_{\d D^*},n_{\d D^*}) \rho(t) \notag \\
                & + \sin(\alpha) \cos(\alpha) II_{\Gamma^*}(n_{\d \Gamma^*},n_{\d \Gamma^*}) \rho(t) + b \sin(\alpha) \rho_{\sigma\sigma}(t) \notag \\
                & + b \sin(\alpha) \kappa_{D^*} II_{D^*}(n_{\d D^*},n_{\d D^*}) \rho(t) - b \sin(\alpha) \kappa_{\d D^*} \skp{\vec{\tau}^*}{(n_{\d D^*})_\sigma} \rho(t) \notag \\
                & - b \sin(\alpha) \skp{n_{\d D^*}}{(n_{D^*})_\sigma}^2 \rho(t) + g_0(t) \hspace*{44.5mm} \text{ on } [0,T] \times \d \Gamma^*, \\ \label{eq:LinearFlow3}
	\rho(0) & = \rho_0 \hspace*{83.5mm} \text{ in } \Gamma^*,
\end{align}
where we have dropped the argument $q$ for a more convenient notation
and introduced $f$ and $g_0$ to compensate for the difference between
the linearized and the nonlinear operator as well as an initial condition. This linearization will be the starting point for the short-time existence of solutions of the MCF in the section to follow.

\section{Local existence of solutions of the volume-preserving MCF with line tension}\label{sec:LocalExistenceMCF}

\subsection{Short-time existence of solutions for the linearized volume-preserving mean curvature flow}\label{ssec:ShortTimeExLinearMCF}

In this section we will show that the flow (\ref{eq:Flow1})-(\ref{eq:Flow2}) has a unique strong solution for short times. Our goal will be achieved by first considering solutions of the linearized flow (\ref{eq:LinearFlow1})-(\ref{eq:LinearFlow3}) and then apply Banach's fixed point argument to transfer the short-time existence to the non-linear flow.

In a first step we want to show that for fixed $T > 0$ the flow
\begin{alignat}{2}\label{eq:LocalFlow1}
	\d_t \rho(t) & = \Delta_{\Gamma^*} \rho(t) + |\sigma^*|^2 \rho(t)
		  + \left(\nabla_{\Gamma^*} H_{\Gamma^*} \cdot P\left(\d_w \Psi(0)\right)\right) \rho(t) + f(t)
			& \ & \text{ in } [0,T] \times \Gamma^*, \\ \label{eq:LocalFlow2}
	\d_t \rho(t) & = -\sin(\alpha)^2 (n_{\d \Gamma^*} \cdot \nabla_{\Gamma^*} \rho(t))
		  - \sin(\alpha) II_{D^*}(n_{\d D^*},n_{\d D^*}) \rho(t) \notag \\
		& + \sin(\alpha) \cos(\alpha) II_{\Gamma^*}(n_{\d \Gamma^*},n_{\d \Gamma^*}) \rho(t)
		  + b \sin(\alpha) \rho_{\sigma\sigma}(t) \notag \\
		& + b \sin(\alpha) \left(\kappa_{D^*} II_{D^*}(n_{\d D^*},n_{\d D^*}) - \kappa_{\d D^*} \skp{\vec{\tau}^*}{(n_{\d D^*})_\sigma}\right) \rho(t) + g_0(t) \notag \\
		& - b \sin(\alpha) \skp{n_{\d D^*}}{(n_{D^*})_\sigma}^2 \rho(t) \hspace*{49mm}&&  \text{ on } [0,T] \times \d \Gamma^*, \\ \label{eq:LocalFlow3}
	\rho(0) & = \rho_0 \hspace*{88mm} &&\text{ in } \Gamma^*,
\end{alignat}
which is (\ref{eq:LinearFlow1})-(\ref{eq:LinearFlow3}) without the non-local part, has a unique solution. To derive such a result we will use results of \cite{DPZ08}.

As a starting point we want to formulate the linearization problem in the notation of \cite{DPZ08} such that (\ref{eq:LocalFlow1})-(\ref{eq:LocalFlow3}) is transformed into a problem of the form
\begin{align*}
	\frac{d}{dt} u(t) + \mathcal{A}(D) u(t) &= f(t) & &\text{ in } J \times \Gamma^*, \\
	\frac{d}{dt} \varrho(t) + \mathcal{B}_0(D) u(t) + \mathcal{C}_0(D_\d) \varrho(t) &= g_0(t) & &\text{ on } J \times \d \Gamma^*, \\
	\mathcal{B}_1(D) u(t) + \mathcal{C}_1(D_\d) \varrho(t) &= g_1(t) & &\text{ on } J \times \d \Gamma^*, \\
	u(0) &= u_0 & &\text{ in } \Gamma^*, \\
	\varrho(0) &= \varrho_0 & &\text{ on } \d \Gamma^*.
\end{align*}
Although the authors only consider domains in $\R^n$ the results carry over to smooth manifolds. We would have to use a partition of unity and local coordinates several times, but for the sake of notation we skip these technicalities.

Dropping the argument $q$ the operators and functions in our case read as
\begin{align*}
	\mathcal{A}(D)   &:= -\Delta_{\Gamma^*} - |\sigma^*|^2 - (\nabla_{\Gamma^*} H_{\Gamma^*} \cdot P(\d_w \Psi(0))) & & \\
	\mathcal{B}_0(D) &:= \sin(\alpha)^2 (n_{\d \Gamma^*} \cdot \nabla_{\Gamma^*}) + \sin(\alpha) II_{D^*}(n_{\d D^*},n_{\d D^*}) & & \\
						  & - \sin(\alpha) \cos(\alpha) II_{\Gamma^*}(n_{\d \Gamma^*},n_{\d \Gamma^*}) & \mathcal{B}_1(D) &:= 1 \\
	\mathcal{C}_0(D_\d) &:= -b \sin(\alpha) \d_\sigma^2 - b \sin(\alpha) \kappa_{D^*} II_{D^*}(n_{\d D^*},n_{\d D^*}) & & \\
						  & + b \sin(\alpha) \kappa_{\d D^*} \skp{\vec{\tau}^*}{(n_{\d D^*})_\sigma} + b \sin(\alpha) \skp{n_{\d D^*}}{(n_{D^*})_\sigma}^2 & \mathcal{C}_1(D_\d) &:= -1 \\
	u(t)          &:= \rho(t) & \varrho(t) &:= \rho(t)|_{\d \Gamma^*}.
\end{align*}

We note that the required condition ``all $\mathcal{B}_j$ and at least one $\mathcal{C}_j$ are non-trivial'' is satisfied. Moreover, in our case we have $E := F := \R$, which are obviously of type $\mathcal{HT}$ since the Hilbert-transform is continuous on $L_2(\R;\R)$. The interval we want to consider is $[0,T]$ denoted by $J$ as in \cite{DPZ08}. Because of
\begin{align*}
	l & := \max\{\ord(\mathcal{C}_0) - \ord(\mathcal{B}_0) + \ord(\mathcal{B}_0), \ord(\mathcal{C}_1) - \ord(\mathcal{B}_1) + \ord(\mathcal{B}_0)\} = 2, \\
	2m & := \ord(\mathcal{A}) = 2
\end{align*}
we have to consider the setting that is called ``case 1'' in \cite{DPZ08}. In our situation the required function spaces simplify to
\begin{align}\label{eq:Spaces}
	X					 & := L_p(J;L_p(\Gamma^*;\R)), \notag \\
	Z_u				 & := W^1_p(J; L_p(\Gamma^*;\R)) \cap L_p(J; W^2_p(\Gamma^*;\R)), \notag \\
	\pi Z_u			 & := W^{2-\frac{2}{p}}_p(\Gamma^*;\R), \notag \\
	Y_0				 & := W^{\frac{1}{2}-\frac{1}{2p}}_p(J; L_p(\d \Gamma^*;\R))
								\cap L_p(J; W^{1-\frac{1}{p}}_p(\d \Gamma^*;\R)), \notag \\
	Y_1				 & := W^{1-\frac{1}{2p}}_p(J; L_p(\d \Gamma^*;\R))
								\cap L_p(J; W^{2-\frac{1}{p}}_p(\d \Gamma^*;\R)), \notag \\
	Z_\varrho		 & := W^{\frac{3}{2}-\frac{1}{2p}}_p(J; L_p(\d \Gamma^*;\R))
								\cap L_p(J; W^{3-\frac{1}{p}}_p(\d \Gamma^*;\R)), \notag \\
	\pi Z_\varrho	 & := W^{3-\frac{3}{p}}_p(\d \Gamma^*;\R), \notag \\
	\pi_1 Z_\varrho & := W^{1-\frac{3}{p}}_p(\d \Gamma^*;\R),\nomenclature[3.2]{$L_p(M)$}{$L_p$-space on $M$}\nomenclature[3.3]{$W^s_p(M)$}{Sobolev-Slobodeckij space of order $s \in \R_+$ on $M$}
\end{align}
where we have to assure $\frac{2}{p} \notin \N$ and $\kappa_0 := 1 - \frac{\ord(\mathcal{B}_0)}{2m} + \frac{1}{2mp} = \frac{1}{2} - \frac{1}{2p} > \frac{1}{p}$ for the trace spaces. Therefore we assume $p > 3$. As the principle parts of the operators we obtain
\begin{align*}
	\mathcal{A}^\sharp(-i\nabla_{\Gamma^*}) &= -\Delta_{\Gamma^*} = (-i\nabla_{\Gamma^*}) \cdot (-i\nabla_{\Gamma^*}), & & \\
	\mathcal{B}_0^\sharp(-i\nabla_{\Gamma^*}) &= i \sin(\alpha(q))^2 (n_{\d \Gamma^*}(q) \cdot (-i\nabla_{\Gamma^*})), & \mathcal{B}_1^\sharp(-i\nabla_{\Gamma^*}) &= 1 ,\\
	\mathcal{C}_0^\sharp(-i\d_\sigma) &= b \sin(\alpha(q)) (-i\d_\sigma)^2, & \mathcal{C}_1^\sharp(t,q,-i\d_\sigma) &= -1.
\end{align*}
To apply the theorems of \cite{DPZ08} we have to check the respective assumptions. We remark that we can ignore the assumptions (LS$^-_\infty$) and (LS$^+_\infty$) due to the case $l = 2m$ and furthermore we can also ignore assumptions (SD), (SB) and (SC), since we have assumed all involved surfaces and operators to be smooth enough. Now we only have to revise the ellipticity assumption (E) and the Lopatinskii-Shapiro condition (LS).

To prove that condition (E) is satisfied let $t \in J$, $q \in \Gamma^*$ and $\xi \in \R^2$ with $\norm{\xi} = 1$. Then we see
\begin{align*}
	\sigma(\mathcal{A}^\sharp(\xi))
		& = \left\{\lambda \in \C \left| \ \lambda - \mathcal{A}^\sharp(\xi) = \lambda - \norm{\xi}^2 = 0\right.\right\} = \{1\} \subseteq \C_+ := \{\lambda \in \C \mid \Re(\lambda) > 0\}.
\end{align*}
In order to check condition (LS), the finite dimension of $E = F = \R$ allows us to prove the equivalent condition that the desired ODE given by
\begin{align*}
	(\lambda + \mathcal{A}^\sharp(\hat{\xi},-i\d_y)) v(y) & = 0, \qquad y>0, \\
	\mathcal{B}_0^\sharp(\hat{\xi},-i\d_y) v(0) + (\lambda + \mathcal{C}_0^\sharp(\hat{\xi})) \sigma & = 0, \\
	\mathcal{B}_1^\sharp(\hat{\xi},-i\d_y) v(0) + \mathcal{C}_1^\sharp(\hat{\xi}) \sigma & = 0
\end{align*}
has only the trivial solution $(v,\sigma) = (0,0)$ in
\begin{align*}
	C_0(\R_+;\R) \times \R := \left\{v: [0,\infty) \longrightarrow \R \left| \ v \text{ is continuous and } \lim_{y \rightarrow \infty} v(y) = 0\right.\right\} \times \R.
\end{align*}
Thus let $\hat{\xi} \in \R$, $\lambda \in \bar{\C_+}$ with $|\hat{\xi}| + |\lambda| \neq 0$. Then the ODE to be considered is
\begin{align*}
	&\text{(I)} & \lambda v(y) + \hat{\xi}^2 v(y) - v''(y) &= 0,\quad y>0, \\
	&\text{(II)} & i \sin(\alpha(q))^2 \left(n_{\d \Gamma^*}(q) \cdot (\hat{\xi} v(0), -i v'(0))^T\right) + \lambda \sigma + b \sin(\alpha(q)) \hat{\xi}^2 \sigma &= 0, \\
	&\text{(III)} & v(0) - \sigma &= 0.
\end{align*}
Equation (I) immediately shows that $v(y) = c_1 e^{\mu y} + c_2 e^{-\mu y}$ with $\mu := \sqrt{\lambda + \hat{\xi}^2} \neq 0$. Since $\mu$ and $-\mu$ appear in $v$ we can w.l.o.g. choose for $\mu$ the complex square root that satisfies $\Re(\mu) > 0$. There is no chance that $\Re(\mu) = 0$ due to the choice of $\lambda$ and $\hat{\xi}$. Since we require $v \in C_0(\R_+;\R)$, one can see that $c_1 = 0$. Now (III) shows $c_2 = v(0) = \sigma$. As demanded in condition (LS) we identify the positive part of the last coordinate axis with the inner normal to $\d \Gamma^*$, i.e., $-n_{\d \Gamma^*} \entspr \left(\begin{smallmatrix} 0 \\ 1 \end{smallmatrix}\right)$. With this identification (II) reads as
\begin{align*}
	0 & = i \sin(\alpha(q))^2 \begin{pmatrix} 0 \\ -1 \end{pmatrix} \cdot \begin{pmatrix} \hat{\xi} v(0) \\ -i v'(0) \end{pmatrix} + \lambda \sigma + b \sin(\alpha(q)) \hat{\xi}^2 \sigma \\
	  & = -\sin(\alpha(q))^2 v'(0) + (\lambda + b \sin(\alpha(q)) \hat{\xi}^2) \sigma.
\end{align*}
This shows that we have
\begin{align*}
	(\lambda + b \sin(\alpha(q)) \hat{\xi}^2) \sigma = \sin(\alpha(q))^2 v'(0) = -\mu \sigma \sin(\alpha(q))^2 e^{-\mu 0} = -\sigma \sin(\alpha(q))^2 \sqrt{\lambda + \hat{\xi}^2},
\end{align*}
which is either the case for $\sigma = 0$ or if
\begin{align}\label{eq:2ndOption}
	\lambda + b \sin(\alpha(q)) \hat{\xi}^2 = -\sin(\alpha(q))^2 \sqrt{\lambda + \hat{\xi}^2}.
\end{align}
This is not possible since $\Re(\lambda + b \sin(\alpha(q)) \hat{\xi}^2) \geq 0$ whereas $-\sin(\alpha(q))^2 \sqrt{\lambda + \hat{\xi}^2}$ has negative real part due to $\Re(\mu) > 0$. This leads to $\sigma = 0$ and hence to $v \equiv 0$, which is the desired condition (LS).

Upon having proved all assumptions of \cite{DPZ08}, we can state our first theorem.

\begin{thm}\label{thm:LocalExistenceGeneral}
Let $3 < p < \infty$, $J := [0,T]$ and the spaces be defined as in (\ref{eq:Spaces}). Then the problem
\begin{align}\label{eq:LocalLinearMCF1}
	\frac{d}{dt} u(t) + \mathcal{A}(D) u(t) &= f(t) & &\text{ in } J \times \Gamma^*, \\ \label{eq:LocalLinearMCF2}
	\frac{d}{dt} \varrho(t) + \mathcal{B}_0(D) u(t) + \mathcal{C}_0(D_\d) \varrho(t) &= g_0(t) & &\text{ on } J \times \d \Gamma^*, \\ \label{eq:LocalLinearMCF3}
	\mathcal{B}_1(D) u(t) + \mathcal{C}_1(D_\d) \varrho(t) &= g_1(t) & &\text{ on } J \times \d \Gamma^*, \\ \label{eq:LocalLinearMCF4}
	u(0) &= u_0 & &\text{ in } \Gamma^*, \\ \label{eq:LocalLinearMCF5}
	\varrho(0) &= \varrho_0 & &\text{ on } \d \Gamma^*
\end{align}
has a unique solution $(u,\varrho) \in Z_u \times Z_\varrho$ if and only if
\begin{align*}
	f \in X, \qquad u_0 \in \pi Z_u, \qquad \varrho_0 \in \pi Z_\varrho, \qquad g_0 \in Y_0, \qquad g_1 \in Y_1, \\
	g_0(0) - \mathcal{B}_0(D) u_0 - \mathcal{C}_0(D_\d) \varrho_0 \in \pi_1 Z_\varrho, \qquad \mathcal{B}_1(D) u_0 + \mathcal{C}_1(D_\d) \varrho_0 = g_1(0).
\end{align*}
\end{thm}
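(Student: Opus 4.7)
The plan is to apply the abstract maximal regularity result of \cite{DPZ08} for parabolic initial-boundary value problems with dynamic boundary conditions. The preceding discussion in this subsection has been preparing exactly the verification of its hypotheses, so the proof is essentially a reading off of the conclusion of that theorem, combined with a localization argument and an identification of the compatibility conditions. The cited theorem covers systems in half-spaces; the transition to the compact smooth manifold-with-boundary $\Gamma^*$ is to be effected by a finite atlas of charts $(U_k,\phi_k)$ with $\phi_k(U_k)\subseteq\R^2$ or $\phi_k(U_k)\subseteq\bar\R^2_+$ together with a subordinate smooth partition of unity, as the authors indicate. Because $\mathcal{A}$, $\mathcal{B}_j$, $\mathcal{C}_j$ are differential operators with smooth coefficients and because the conditions (E) and (LS) involve only the principal symbols, both are invariant under smooth coordinate changes, so the localized problems fall within the scope of \cite{DPZ08}.

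The remaining structural hypotheses of \cite{DPZ08} have already been dispatched above: the type $\mathcal{HT}$ property is trivial since $E=F=\R$; the smoothness assumptions (SD), (SB), (SC) hold since $\Omega$, $\Gamma^*$, $\d\Gamma^*$ and all coefficients of $\mathcal{A}$, $\mathcal{B}_j$, $\mathcal{C}_j$ are smooth; (LS$^-_\infty$) and (LS$^+_\infty$) are vacuous in the ``case $1$'' situation $l=2m=2$; ellipticity was verified through $\sigma(\mathcal{A}^\sharp(\xi))=\{1\}\subseteq\C_+$ for $\norm\xi=1$; and the Lopatinskii-Shapiro condition (LS) was reduced to a scalar ODE whose only $C_0(\R_+;\R)\times\R$ solution was shown to be $(0,0)$, using the standing angle assumption (\ref{eq:AngleAssumption}) in the form $\sin(\alpha)\neq 0$. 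The choice $p>3$ is exactly what ensures the trace-space relation $\kappa_0=\tfrac12-\tfrac{1}{2p}>\tfrac{1}{p}$ required by \cite{DPZ08} and also guarantees that the initial traces in time of $g_0$, $g_1$, and of functions in $Z_u$, $Z_\varrho$ make sense.

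The compatibility conditions then emerge naturally. Taking the time-trace $t=0$ in (\ref{eq:LocalLinearMCF3}) gives $\mathcal{B}_1(D)u_0+\mathcal{C}_1(D_\d)\varrho_0=g_1(0)$, which is meaningful in $W^{2-\tfrac{3}{p}}_p(\d\Gamma^*;\R)$ because $u_0\in\pi Z_u$ has a trace on $\d\Gamma^*$ in $W^{2-\tfrac{3}{p}}_p$, and $g_1\in Y_1$ has a well-defined time-trace at $0$ by the mixed derivative theorem. For (\ref{eq:LocalLinearMCF2}) the situation is more subtle: $\d_t\varrho$ belongs only to $W^{\tfrac12-\tfrac{1}{2p}}_p(J;L_p)\cap L_p(J;W^{1-\tfrac{1}{p}}_p)$, whose elements do not in general have continuous time traces, so one cannot simply set $t=0$; instead the equation $g_0(0)-\mathcal{B}_0(D)u_0-\mathcal{C}_0(D_\d)\varrho_0\in\pi_1 Z_\varrho=W^{1-\tfrac{3}{p}}_p(\d\Gamma^*;\R)$ is the precise statement that the right-hand side lies in the correct trace space of $\d_t\varrho$ at $t=0$. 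These are precisely the necessary conditions; that they are also sufficient is the content of \cite{DPZ08}.

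The only real obstacle is bookkeeping: matching the spaces $X$, $Z_u$, $Y_0$, $Y_1$, $Z_\varrho$, $\pi Z_u$, $\pi Z_\varrho$, $\pi_1 Z_\varrho$ of (\ref{eq:Spaces}) term-by-term with the notation of \cite{DPZ08}, checking that the orders $\ord(\mathcal{A})=2$, $\ord(\mathcal{B}_0)=1$, $\ord(\mathcal{B}_1)=0$, $\ord(\mathcal{C}_0)=2$, $\ord(\mathcal{C}_1)=0$ indeed place us in ``case $1$'' with $l=2m=2$, and confirming that the localization through partition of unity preserves both the maximal-$L_p$-regularity estimate and the compatibility relations. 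With these routine verifications done, Theorem \ref{thm:LocalExistenceGeneral} follows as a direct corollary of the main result of \cite{DPZ08}.
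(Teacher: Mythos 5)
Your proposal is correct and follows the same route as the paper: the result is obtained as a direct application of Theorem 2.1 of \cite{DPZ08}, with all hypotheses — identification of the operators and spaces, the case $l=2m=2$, conditions (E) and (LS), and the transfer from half-space models to the manifold $\Gamma^*$ via charts and a partition of unity — verified in the discussion preceding the theorem statement, exactly as the paper arranges it. Your additional remarks on why the two compatibility conditions take their respective forms (an equality for the static boundary condition, a membership in $\pi_1 Z_\varrho$ for the dynamic one) are consistent with, and slightly more explicit than, what the paper records.
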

\begin{proof}
Follows from Theorem 2.1 in \cite{DPZ08} applied to our specific case.
\end{proof}

From this theorem we deduce a corollary that gives us the existence and uniqueness of solutions of the flow (\ref{eq:LocalFlow1})-(\ref{eq:LocalFlow3}) on each finite interval.

\begin{cor}\label{cor:LocalExistence1}
Let $3 < p < \infty$, $J := [0,T]$ and the spaces be defined as in (\ref{eq:Spaces}). Then (\ref{eq:LocalFlow1})-(\ref{eq:LocalFlow3}) has a unique solution $\rho \in Z_u$ with $\rho|_{\d \Gamma^*} \in Z_\varrho$ if and only if $f \in X$, $g_0 \in Y_0$, $\rho_0 \in \pi Z_u$ and $\rho_0|_{\d \Gamma^*} \in \pi Z_\varrho$.
\end{cor}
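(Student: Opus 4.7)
The plan is to recognize that the system (\ref{eq:LocalFlow1})-(\ref{eq:LocalFlow3}) is precisely the abstract system (\ref{eq:LocalLinearMCF1})-(\ref{eq:LocalLinearMCF5}) of Theorem \ref{thm:LocalExistenceGeneral} specialized to $g_1 \equiv 0$. Setting $u := \rho$ and $\varrho := \rho|_{\d \Gamma^*}$, and using $\mathcal{B}_1(D)=1$ and $\mathcal{C}_1(D_\d)=-1$, the middle equation (\ref{eq:LocalLinearMCF3}) reduces to $u|_{\d \Gamma^*} - \varrho = 0$, which is not an additional boundary condition but simply the trace identification. Hence a solution $\rho$ of (\ref{eq:LocalFlow1})-(\ref{eq:LocalFlow3}) corresponds one-to-one to a pair $(u,\varrho) = (\rho,\rho|_{\d \Gamma^*})$ satisfying (\ref{eq:LocalLinearMCF1})-(\ref{eq:LocalLinearMCF5}) with $g_1 \equiv 0$, so Theorem \ref{thm:LocalExistenceGeneral} can be invoked directly.

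Next, I verify that the compatibility conditions of that theorem collapse to the four hypotheses listed in the corollary. The boundary compatibility at $t=0$ reads $\mathcal{B}_1(D) u_0 + \mathcal{C}_1(D_\d) \varrho_0 = \rho_0|_{\d \Gamma^*} - \rho_0|_{\d \Gamma^*} = 0 = g_1(0)$, which is automatic from the choice $\varrho_0 := \rho_0|_{\d \Gamma^*}$. The remaining regularity compatibility $g_0(0) - \mathcal{B}_0(D) u_0 - \mathcal{C}_0(D_\d) \varrho_0 \in \pi_1 Z_\varrho = W^{1-\frac{3}{p}}_p(\d \Gamma^*;\R)$ must be checked term by term. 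Since $\mathcal{B}_0$ is first order and $\rho_0 \in W^{2-\frac{2}{p}}_p(\Gamma^*;\R)$, standard trace theory yields $\mathcal{B}_0(D) u_0 \in W^{1-\frac{3}{p}}_p(\d \Gamma^*;\R)$; since $\mathcal{C}_0$ is second-order tangential and $\varrho_0 \in W^{3-\frac{3}{p}}_p(\d \Gamma^*;\R)$, one similarly obtains $\mathcal{C}_0(D_\d) \varrho_0 \in W^{1-\frac{3}{p}}_p(\d \Gamma^*;\R)$; finally $g_0(0)$ is the temporal trace of $g_0 \in Y_0$ at $t=0$, which for $p>3$ lies in $W^{1-\frac{3}{p}}_p(\d \Gamma^*;\R)$ as well.

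The most delicate step is this last trace inclusion: one has to identify the initial-time trace space of $Y_0 = W^{\frac{1}{2}-\frac{1}{2p}}_p(J;L_p(\d \Gamma^*;\R)) \cap L_p(J;W^{1-\frac{1}{p}}_p(\d \Gamma^*;\R))$ via real interpolation and compare the resulting smoothness exponent against $1-\frac{3}{p}$; the assumption $p>3$ ensures both that the temporal trace is well-defined and that it embeds into $\pi_1 Z_\varrho$, in line with the trace-space framework used in \cite{DPZ08}. With this compatibility in place, Theorem \ref{thm:LocalExistenceGeneral} produces a unique pair $(u,\varrho) \in Z_u \times Z_\varrho$, equivalently a unique $\rho \in Z_u$ with $\rho|_{\d \Gamma^*} \in Z_\varrho$. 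The converse direction is immediate: reading (\ref{eq:LocalFlow1})-(\ref{eq:LocalFlow3}) off a given solution forces $f \in X$, $g_0 \in Y_0$, $\rho_0 \in \pi Z_u$, and $\rho_0|_{\d \Gamma^*} \in \pi Z_\varrho$.
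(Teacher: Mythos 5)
Your proposal is correct and follows essentially the same route as the paper: apply Theorem \ref{thm:LocalExistenceGeneral} with $g_1 \equiv 0$, observe that the compatibility condition $\mathcal{B}_1(D) u_0 + \mathcal{C}_1(D_\d) \varrho_0 = g_1(0)$ holds automatically from the trace identification $\varrho_0 = \rho_0|_{\d \Gamma^*}$, and verify that $g_0(0) - \mathcal{B}_0(D) u_0 - \mathcal{C}_0(D_\d) \varrho_0 \in \pi_1 Z_\varrho$ is automatic via the orders of $\mathcal{B}_0$, $\mathcal{C}_0$ and the spatial and temporal trace mappings for $p>3$. The paper handles the temporal trace of $Y_0$ by citing (A.25) of \cite{Gru95} rather than sketching the interpolation argument, but the substance is the same.
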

\begin{proof}
Follows from Theorem \ref{thm:LocalExistenceGeneral} with $g_1 \equiv 0$. Then the condition $\mathcal{B}_1(0) u_0 + \mathcal{C}_1(0) \varrho_0 = g_1(0)$ is valid since $u_0|_{\d \Gamma^*} = \rho_0|_{\d \Gamma^*} = \varrho_0$. Moreover,
\begin{align*}
	g_0(0) - \mathcal{B}_0(-i\nabla_{\Gamma^*})\rho_0 - \mathcal{C}_0(-i \d_\sigma) \rho_0|_{\d \Gamma^*} \in \pi_1 Z_\varrho = W^{1-\frac{3}{p}}_p(\d \Gamma^*;\R)
\end{align*}
can be ignored since on the one hand $\rho_0|_{\d \Gamma^*} \in \pi Z_\varrho = W^{3-\frac{3}{p}}_p(\d \Gamma^*;\R)$ and $\mathcal{C}_0$ is of second order and on the other hand $\rho_0 \in \pi Z_u = W^{2-\frac{2}{p}}_p(\Gamma^*;\R)$, $\mathcal{B}_0$ is of first order and the trace operator $\gamma_0$ maps from $W^{1-\frac{2}{p}}_p(\Gamma^*;\R)$ to $W^{1-\frac{3}{p}}_p(\d \Gamma^*;\R)$ for $p > 3$. In addition, $g_0(0) \in \pi_1 Z_\varrho$, since the trace operator $\gamma_0$ maps from $Y_0$ to $\pi_1 Z_\varrho$ as one can see from (A.25) in \cite{Gru95}.
\end{proof}

Now we want to move on to the more important considerations about the non-local part, which we ignored in (\ref{eq:LocalFlow1})-(\ref{eq:LocalFlow3}), but has to be included for the flow (\ref{eq:LinearFlow1})-(\ref{eq:LinearFlow3}). The basic ingredient will be a perturbation result of semigroup theory and the time-independence of the operators $\mathcal{A}$, $\mathcal{B}_0$, $\mathcal{B}_1$, $\mathcal{C}_0$ and $\mathcal{C}_1$.

For a second theorem from \cite{DPZ08} we have to define the operator
\begin{align*}
	A: \mathcal{D}(A) \longrightarrow \mathcal{W}(A): \begin{pmatrix}
																		  \rho \\
																		  \tilde{\rho}
																	  \end{pmatrix} \longmapsto
	\begin{pmatrix}
		\mathcal{A}(-i\nabla_{\Gamma^*}) & \mathbb{O} \\
		\mathcal{B}_0(-i\nabla_{\Gamma^*}) & \mathcal{C}_0(-i\d_\sigma)
	\end{pmatrix} \begin{pmatrix}
						  \rho \\
						  \tilde{\rho}
					  \end{pmatrix},
\end{align*}
where the domain and codomain are
\begin{align*}
	\mathcal{D}(A) & := \left\{\left.(\rho,\tilde{\rho})^T \in W^2_p(\Gamma^*;\R) \times W^{3-\frac{1}{p}}_p(\d \Gamma^*;\R)
									 \right| \rho|_{\d \Gamma^*} = \tilde{\rho}\right\} \\
	\mathcal{W}(A) & := L_p(\Gamma^*;\R) \times W^{1-\frac{1}{p}}_p(\d \Gamma^*;\R).
\end{align*}

\begin{rem}\label{rem:DropConditionMCF}
Note that the condition $\mathcal{B}_0(-i\nabla_{\Gamma^*}) \rho + \mathcal{C}_0(-i \d_\sigma) \tilde{\rho} \in W^{1-\frac{1}{p}}_p(\d \Gamma^*;\R)$ from the original domain in \cite{DPZ08} is automatically satisfied by the same arguments as in the proof of Corollary \ref{cor:LocalExistence1}.
\end{rem}

For this new operator $A$ we get the following statement from \cite{DPZ08}.

\begin{thm}\label{thm:Semigroup1}
Let $3 < p < \infty$. Then the operator $-A$ generates an analytic semigroup in $\mathcal{W}(A)$, which has the property of maximal $L_p$-regularity on each finite interval $J = [0,T]$. Moreover, there is $\omega \geq 0$ such that $-(A + \omega \id)$ has maximal $L_p$-regularity on the half-line $\R_+$.
\end{thm}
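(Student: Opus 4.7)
The plan is to derive Theorem \ref{thm:Semigroup1} from Corollary \ref{cor:LocalExistence1} by recasting the boundary value problem (\ref{eq:LocalLinearMCF1})--(\ref{eq:LocalLinearMCF5}) as an abstract Cauchy problem on $\mathcal{W}(A)$ and then invoking the standard equivalence between $L_p$-maximal regularity and analytic semigroup generation on UMD spaces. Concretely, I would write the system in the form
\begin{align*}
\dot U(t) + A\, U(t) = F(t) \text{ in } J, \qquad U(0) = U_0,
\end{align*}
with $U = (u,\varrho)^T$, $F = (f,g_0)^T$ and $U_0 = (u_0,\varrho_0)^T$. The trace coupling $u|_{\d \Gamma^*} = \varrho$ coming from $\mathcal{B}_1 u + \mathcal{C}_1 \varrho = 0$ is built into $\mathcal{D}(A)$, and Remark \ref{rem:DropConditionMCF} disposes of the seeming extra regularity condition on $\mathcal{B}_0 \rho + \mathcal{C}_0 \tilde\rho$.

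First, I would match the abstract function spaces with the concrete ones from (\ref{eq:Spaces}). By anisotropic Sobolev space theory, the maximal regularity class $W^1_p(J;\mathcal{W}(A)) \cap L_p(J;\mathcal{D}(A))$ is identified with the subspace of $Z_u \times Z_\varrho$ satisfying $u|_{\d \Gamma^*} = \varrho$, its natural initial trace space being (the compatibility-constrained part of) $\pi Z_u \times \pi Z_\varrho$. In the same way the image $L_p(J;\mathcal{W}(A))$ of $\d_t + A$ is identified with $X \times Y_0$, using that $\varrho \in Z_\varrho$ already forces $\d_t \varrho \in Y_0$ by the mixed-derivative characterization of $Z_\varrho$. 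Under these identifications, Corollary \ref{cor:LocalExistence1} is precisely the statement that $A$ has $L_p$-maximal regularity on every finite interval $J = [0,T]$.

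Second, I would invoke the classical correspondence underlying \cite{DPZ08}: on a UMD space, $L_p$-maximal regularity on a bounded interval is equivalent to the operator being (up to a shift) the negative generator of an analytic semigroup enjoying maximal $L_p$-regularity. Since $L_p(\Gamma^*;\R)$ and $W^{1-1/p}_p(\d \Gamma^*;\R)$ are UMD for $1 < p < \infty$, so is their product $\mathcal{W}(A)$, and hence $-A$ generates an analytic semigroup on $\mathcal{W}(A)$ with the finite-interval maximal regularity property.

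Finally, for maximal $L_p$-regularity on $\R_+$, I would choose $\omega \geq 0$ large enough so that $\sigma(A + \omega\, \id) \subseteq \{\lambda\in\C : \Re \lambda > 0\}$, which is possible because $-A$ is sectorial. Then $-(A+\omega\, \id)$ generates an exponentially stable analytic semigroup, and a standard argument (see \cite{DPZ08}) transports the finite-interval maximal regularity to the half-line. The principal obstacle is the bookkeeping in the first step: precisely matching $Z_u, Z_\varrho, X, Y_0, \pi Z_u, \pi Z_\varrho$ with the abstract operator-theoretic spaces and verifying the compatibility conditions at $t = 0$. Once this identification is in place, the semigroup and half-line claims follow directly from the general theory.
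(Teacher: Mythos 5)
Your overall strategy (abstract Cauchy problem on $\mathcal{W}(A)$, then Dore's theorem ``maximal regularity $\Rightarrow$ analytic generation'', then a spectral shift for the half-line) is reasonable in outline, but the first step contains a genuine gap that the rest of the argument cannot repair. The identification of the data space $L_p(J;\mathcal{W}(A))$ with $X\times Y_0$ is false: $L_p(J;\mathcal{W}(A)) = X \times L_p(J;W^{1-\frac1p}_p(\d\Gamma^*;\R))$, whereas $Y_0 = W^{\frac12-\frac1{2p}}_p(J;L_p(\d\Gamma^*;\R)) \cap L_p(J;W^{1-\frac1p}_p(\d\Gamma^*;\R))$ carries additional \emph{time} regularity which a general $g_0 \in L_p(J;W^{1-\frac1p}_p(\d\Gamma^*;\R))$ does not possess. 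Likewise the abstract maximal-regularity solution class $W^1_p(J;\mathcal{W}(A))\cap L_p(J;\mathcal{D}(A))$ is strictly larger than $Z_u\times Z_\varrho$ in its second component: $Z_\varrho$ embeds into $W^1_p(J;W^{1-\frac1p}_p(\d\Gamma^*;\R))\cap L_p(J;W^{3-\frac1p}_p(\d\Gamma^*;\R))$ by the mixed-derivative theorem, but the converse fails, since no interpolation between a time-regularity-one and a time-regularity-zero space can produce the $W^{\frac32-\frac1{2p}}_p(J;L_p)$ component of $Z_\varrho$. Consequently Corollary \ref{cor:LocalExistence1} (equivalently Theorem 2.1 of \cite{DPZ08}) only solves the problem for the \emph{smaller} class of boundary data $g_0\in Y_0$, and does not establish maximal $L_p$-regularity of $A$ on $\mathcal{W}(A)$; without that, the appeal to Dore's theorem has nothing to act on.

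This is precisely why the paper does not argue this way: it invokes Theorem 2.2 of \cite{DPZ08}, which is a separate result formulated for the semigroup (relaxation) setting with $g_0$ merely in $L_p(J;W^{1-\frac1p}_p(\d\Gamma^*;\R))$, and whose hypotheses ((E), (LS), etc.) were verified in Subsection \ref{ssec:ShortTimeExLinearMCF}. Two smaller remarks: the implication ``maximal $L_p$-regularity on a finite interval $\Rightarrow$ $-A$ generates an analytic semigroup'' holds in arbitrary Banach spaces and does not need UMD (UMD enters only in the converse direction); and your half-line argument via a shift $\omega$ making the shifted semigroup exponentially stable is standard and would be fine once finite-interval maximal regularity of $A$ on $\mathcal{W}(A)$ is actually established.
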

\begin{proof}
Adapt Theorem 2.2 from \cite{DPZ08} to the given situation.
\end{proof}

\begin{rem}
For the same reason as in the proof of Corollary \ref{cor:LocalExistence1} we were allowed to erase the three conditions
\begin{align*}
	& \mathcal{B}_0(-i\nabla_{\Gamma^*}) \rho + \mathcal{C}_0(-i \d_\sigma) \rho|_{\d \Gamma^*} \in L_p(J;W^{1-\frac{1}{p}}_p(\d \Gamma^*;\R)), \\
	& \mathcal{B}_0(-i\nabla_{\Gamma^*}) \rho_0 + \mathcal{C}_0(-i \d_\sigma) \rho_0|_{\d \Gamma^*} \in \pi_1 Z_\varrho, \\
	& \mathcal{B}_1(-i\nabla_{\Gamma^*}) \rho_0 + \mathcal{C}_1(-i \d_\sigma) \rho_0|_{\d \Gamma^*} = g_1(0)
\end{align*}
from the original theorem in \cite{DPZ08}.
\end{rem}

\subsection{Short-time existence of solutions for the volume-preserving mean curvature flow}\label{ssec:ShortTimeExMCF}

Now we want to prove the short-time existence of solutions of the non-linear flow
\begin{align}\label{eq:NonLinearMCF1}
	V_\Gamma(u(t))       &= H_\Gamma(u(t)) - \bar{H}(u(t)) & &\text{in } [0,T] \times \Gamma^*, \\ \label{eq:NonLinearMCF2}
	v_{\d D}(\varrho(t)) &= a + b \kappa_{\d D}(\varrho(t)) + \skp{n_\Gamma(u(t))}{n_D(u(t))} & &\text{on } [0,T] \times \d \Gamma^*, \\ \label{eq:NonLinearMCF3}
	u(t)                 &= \varrho(t) & &\text{on } [0,T] \times \d \Gamma^*, \\ \label{eq:NonLinearMCF4}
	u(0)                 &= u_0 & &\text{in } \Gamma^*, \\ \label{eq:NonLinearMCF5}
	\varrho(0)           &= \varrho_0 & &\text{on } \d \Gamma^*,
\end{align}
where we have adopted the structure of the linearized PDE in Theorem \ref{thm:LocalExistenceGeneral}. We will use the contraction mapping principle on the equation $L \Phi = (N \Phi, \Phi_0)$ to prove the desired short-time existence. To this end we define the functions $\Phi := (u,\varrho)$ and $\Phi_0 := (u_0,\varrho_0)$, the spaces
\begin{align*}
	\mathbb{E} & := Z_u \times Z_\varrho, \qquad \qquad \mathbb{F} := X \times Y_0 \times \{0\}, \\
	\mathbb{I} & := \left\{(u_0,\varrho_0) \in \pi Z_u \times \pi Z_\varrho \mid u_0|_{\d \Gamma^*} = \varrho_0\right\}
\end{align*}
the operator $L: \mathbb{E} \longrightarrow \mathbb{F} \times \mathbb{I}$ as the left-hand side of (\ref{eq:LocalLinearMCF1})-(\ref{eq:LocalLinearMCF5}) and for the right-hand side of the contraction mapping principle we define the non-linear operator $N: \mathbb{E} \longrightarrow \mathbb{F}$ as
\begin{align*}
	N(\Phi) := \begin{pmatrix}
					  H_\Gamma(u) - \bar{H}(u) - V_\Gamma(u) + \frac{d}{dt} u + \mathcal{A}(D) u \\
					  a + b \kappa_{\d D}(\varrho) + \skp{n_\Gamma(u)}{n_D(u)} - v_{\d D}(\varrho) + \frac{d}{dt} \varrho + \mathcal{B}_0(D) u + \mathcal{C}_0(D_\d) \varrho \\
					  0
				  \end{pmatrix}.
\end{align*}
In order to apply the contraction mapping principle we need the following technical lemmas.

\begin{lemma}\label{lem:Embeddings}
Let $1 < p < \infty$ and $J = [0,T]$. Then for all $\sigma \in (0,1)$ we have the following embeddings
\begin{align}\label{eq:CrucialEmbedding}
	Z_u & \hookrightarrow W^\sigma_p(J;W^{2(1 - \sigma)}_p(\Gamma^*;\R)), \notag \\
	Z_\varrho & \hookrightarrow W^{\sigma(\frac{3}{2} - \frac{1}{2p})}_p(J;W^{(1 - \sigma)(3 - \frac{1}{p})}_p(\d \Gamma^*;\R)), \notag \\
	Y_0 & \hookrightarrow W^{\sigma(\frac{1}{2} - \frac{1}{2p})}_p(J;W^{(1 - \sigma)(1 - \frac{1}{p})}_p(\d \Gamma^*;\R)), \notag \\
	Z_u & \hookrightarrow BUC(J;W^{2 - \frac{2}{p}}_p(\Gamma^*;\R)) \hookrightarrow BUC(J;BUC^1(\Gamma^*;\R)),
\end{align}
where $BUC^k$ denotes the space of bounded uniformly continuous functions.
\end{lemma}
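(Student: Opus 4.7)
The plan is to treat each of the claimed inclusions as an instance of a standard interpolation/trace theorem for anisotropic parabolic-type spaces of the form $W^\alpha_p(J;X_0)\cap L_p(J;X_1)$, and to identify the resulting intermediate spaces with the concrete fractional Sobolev spaces appearing on the right-hand side. The underlying tool for the first three embeddings is the so-called mixed derivative theorem (the version for intersection spaces due to Grisvard, Sobolevski\u{\i} and, in full generality, Amann): for a compatible couple with $X_1 \hookrightarrow X_0$ (both UMD), any $\sigma \in (0,1)$ and any $\alpha>0$,
\begin{align*}
W^\alpha_p(J;X_0) \cap L_p(J;X_1) \hookrightarrow W^{\sigma\alpha}_p\bigl(J;[X_0,X_1]_{1-\sigma}\bigr),
\end{align*}
so the task reduces to selecting $(\alpha,X_0,X_1)$ correctly and identifying the complex interpolation space.

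For the first line I would take $\alpha=1$, $X_0 = L_p(\Gamma^*;\R)$, $X_1 = W^2_p(\Gamma^*;\R)$; since $[L_p,W^2_p]_{1-\sigma}$ coincides with $W^{2(1-\sigma)}_p$ whenever $2(1-\sigma)\notin\N$ (via the identification with Bessel potential spaces and the fact $H^s_p = W^s_p$ for non-integer $s$), the claimed inclusion for $Z_u$ is immediate. For the second I would take $\alpha = \tfrac{3}{2}-\tfrac{1}{2p}$ with $X_0 = L_p(\d\Gamma^*;\R)$, $X_1 = W^{3-\frac{1}{p}}_p(\d\Gamma^*;\R)$, and analogously for $Y_0$ with $\alpha = \tfrac{1}{2}-\tfrac{1}{2p}$ and $X_1 = W^{1-\frac{1}{p}}_p(\d\Gamma^*;\R)$. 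In all three cases one then reads off the asserted orders $\sigma\alpha$ in time and $(1-\sigma)\beta$ in space.

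The fourth assertion splits naturally in two. The inclusion $Z_u \hookrightarrow BUC(J;W^{2-2/p}_p(\Gamma^*;\R))$ is the classical trace theorem for intersection spaces: the initial-value map on $W^1_p(J;L_p)\cap L_p(J;W^2_p)$ is surjective onto the real interpolation space $(L_p,W^2_p)_{1-1/p,p} = B^{2-2/p}_{p,p} = W^{2-2/p}_p$, and continuity in time in the norm of this trace space follows by density of smooth functions. The second inclusion $W^{2-2/p}_p(\Gamma^*;\R) \hookrightarrow BUC^1(\Gamma^*;\R)$ is a Sobolev embedding on the compact $2$-dimensional manifold $\Gamma^*$, which holds once $2-\tfrac{2}{p} > 1 + \tfrac{2}{p}$, i.e., for $p>4$; this is implicit in the standing assumption on $p$ used later.

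The main obstacle is not a single hard estimate but rather the careful bookkeeping: one has to make sure that the non-integer Sobolev exponents arising from $\alpha$ and $\beta$ stay away from integers so that the identification of complex interpolation spaces with Sobolev--Slobodeckij spaces is valid, and one must transport all scalar statements from $\R^n$ to the compact manifolds $\Gamma^*$ and $\d\Gamma^*$ via a partition of unity and local charts, as the authors have already announced they will suppress. Given the earlier assumption $p>3$ (and in fact $p>4$ for the last embedding), none of these arithmetic conditions cause any trouble.
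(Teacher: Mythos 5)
Your proposal is correct and follows essentially the same route as the paper: the first three embeddings are exactly the mixed derivative theorem for intersection spaces (this is Lemma 4.3 of the cited reference [DSS08], which the paper invokes without restating), and the fourth is the temporal trace embedding into $BUC(J;W^{2-2/p}_p)$ (Lemma 4.4 there) combined with the Sobolev embedding on the two-dimensional $\Gamma^*$, for which you correctly identify the requirement $p>4$ that the paper also flags.
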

\begin{proof}
Note that it is not necessary to distinguish $W^s_p(J;W^r_p(\Gamma^*;\R))$ and $W^s_p(J;W^r_p(K;\R))$. Then the statement follows from Lemma 4.3 and Lemma 4.4. in \cite{DSS08}, the assumption $p > 4$ and usual Sobolev embeddings.
\end{proof}

\begin{rem}
The second embedding in (\ref{eq:CrucialEmbedding}) is only valid for $p > 4$ and will be crucial in the considerations to follow. This is the reason why we are forced to restrict the range of $p$ from $p > 3$ in Theorem \ref{thm:LocalExistenceGeneral} to $p > 4$ in our final Theorem \ref{thm:ShortTimeExistenceMCF} below.
\end{rem}

Later in our most important technical lemma we will deal with quasi-linear differential operators. For this purpose it is helpful to know that the spaces containing the second highest derivatives are Banach algebras (cf. Lemma \ref{lem:AlgebraMCF}).

Since due to Lemma 4.3 of \cite{DSS08} with $\sigma = \frac{1}{2}$, we get $Z_u \hookrightarrow W^{\frac{1}{2}}_p(J;W^1_p(\Gamma^*;\R))$ and observe
\begin{align*}
	\nabla_{\Gamma^*} u \in W^{\frac{1}{2}}_p(J;L_p(\Gamma^*;\R)) \cap L_p(J;W^1_p(\Gamma^*;\R)).
\end{align*}
For $\sigma = \frac{2p - 1}{3p - 1}$, we get $Z_\varrho \hookrightarrow W^{1 - \frac{1}{2p}}_p(J;W^1_p(\d \Gamma^*;\R))$ and arrive at
\begin{align*}
	\d_{\sigma} \varrho \in W^{1 - \frac{1}{2p}}_p(J;L_p(\d \Gamma^*;\R)) \cap L_p(J;W^{2 - \frac{1}{p}}_p(\d \Gamma^*;\R)).
\end{align*}

\begin{lemma}\label{lem:AlgebraMCF}
Let $4 < p < \infty$. Then the spaces
\begin{align*}
	\text{(i)} &\quad & \nabla^1 Z_u &:= W^{\frac{1}{2}}_p(J;L_p(\Gamma^*;\R)) \cap L_p(J;W^1_p(\Gamma^*;\R)) \\
	\text{(ii)} &\quad & \nabla^1 Z_\varrho &:= W^{1 - \frac{1}{2p}}_p(J;L_p(\d \Gamma^*;\R)) \cap L_p(J;W^{2 - \frac{1}{p}}_p(\d \Gamma^*;\R)),
\end{align*}
which contain the first spacial derivatives of $u$ and the first arc-length derivatives of $\varrho$ are Banach algebras up to a constant in the norm estimate of the product.
\end{lemma}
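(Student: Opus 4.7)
The plan is to show $\|fg\|_{\nabla^1 Z_\bullet} \leq C \|f\|_{\nabla^1 Z_\bullet} \|g\|_{\nabla^1 Z_\bullet}$ by estimating each of the two norms that make up the intersection space separately, relying at every step on an $L_\infty$ embedding of $\nabla^1 Z_\bullet$ into $L_\infty(J \times M)$ (with $M = \Gamma^*$ or $M = \d\Gamma^*$) that allows us to move one factor out in sup norm.

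\textbf{Step 1 (embedding into $L_\infty$).} Apply the interpolation inequality from Lemma 4.3 of \cite{DSS08} in the form
\begin{align*}
W^{s}_p(J; L_p(M)) \cap L_p(J; W^{r}_p(M)) \hookrightarrow W^{s(1-\theta)}_p\bigl(J; W^{r\theta}_p(M)\bigr),\quad \theta\in(0,1),
\end{align*}
and then combine with the Sobolev/Morrey embeddings $W^{s(1-\theta)}_p(J;X) \hookrightarrow BUC(J;X)$ when $s(1-\theta)p>1$ and $W^{r\theta}_p(M)\hookrightarrow L_\infty(M)$ when $r\theta p>\dim M$. For (i) this forces $2/p<\theta<1-2/p$, compatible exactly when $p>4$. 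For (ii) it forces $1/(2p-1)<\theta<(2p-3)/(2p-1)$, compatible already for $p>2$. In both cases we obtain $\nabla^1 Z_\bullet \hookrightarrow BUC(J;L_\infty(M))$.

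\textbf{Step 2 (spatial component).} For (i), the Leibniz rule together with $W^1_p(\Gamma^*)\hookrightarrow L_\infty(\Gamma^*)$ ($p>2$, $\dim\Gamma^*=2$) gives the classical tame estimate
\begin{align*}
\|fg\|_{W^1_p(\Gamma^*)} \leq C\bigl(\|f\|_{L_\infty}\|g\|_{W^1_p} + \|g\|_{L_\infty}\|f\|_{W^1_p}\bigr).
\end{align*}
For (ii), since $s=2-\tfrac{1}{p}>1/p$ for $p>1$, the corresponding Kato--Ponce tame estimate yields
\begin{align*}
\|fg\|_{W^{2-1/p}_p(\d\Gamma^*)} \leq C\bigl(\|f\|_{L_\infty}\|g\|_{W^{2-1/p}_p} + \|g\|_{L_\infty}\|f\|_{W^{2-1/p}_p}\bigr).
\end{align*}
Taking $L_p(J)$-norms in time and pulling out the $L_\infty(J;L_\infty(M))$-factor via Step 1 produces $\|fg\|_{L_p(J;W^r_p(M))} \leq C\|f\|_{\nabla^1 Z_\bullet}\|g\|_{\nabla^1 Z_\bullet}$ with $r=1$ or $r=2-1/p$.

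\textbf{Step 3 (temporal component).} Expand the Gagliardo seminorm with $s=1/2$ respectively $s=1-\tfrac{1}{2p}$,
\begin{align*}
[fg]_{W^s_p(J;L_p(M))}^p = \int_J\!\int_J \frac{\|f(t)g(t)-f(\tau)g(\tau)\|_{L_p(M)}^p}{|t-\tau|^{1+sp}}\, d\tau\, dt,
\end{align*}
and split the numerator as $\|f(t)-f(\tau)\|_{L_p(M)}\|g(t)\|_{L_\infty(M)} + \|f(\tau)\|_{L_\infty(M)}\|g(t)-g(\tau)\|_{L_p(M)}$. Using Step 1 to bound the $L_\infty$-factors by $C\|f\|_{\nabla^1 Z_\bullet}$ and $C\|g\|_{\nabla^1 Z_\bullet}$ respectively, we obtain the required product inequality on the time-regularity piece; the $L_p(J;L_p(M))$-part of the full norm is handled directly by $\|fg\|_{L_p(J;L_p(M))} \leq \|f\|_{L_\infty(J\times M)} \|g\|_{L_p(J;L_p(M))}$. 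Adding the two contributions completes both (i) and (ii).

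\textbf{Main obstacle.} The delicate point is Step 2 in case (ii): the space $W^{2-1/p}_p(\d\Gamma^*)$ has fractional smoothness, so the Leibniz-type splitting cannot be obtained by a naïve product rule. One must invoke a Kato--Ponce / Moser product estimate, and verify it is available on the compact one-dimensional manifold $\d\Gamma^*$ (either via local charts and a partition of unity, or by identifying $\d\Gamma^*$ with $\mathbb{S}^1$ and using Fourier-analytic tools). Once this tame estimate is in hand, the remaining assembly of the two norms is routine, and the restriction $p>4$ is seen to come solely from the $L_\infty$-embedding needed in Step 1 for case (i).
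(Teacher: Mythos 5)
Your proposal is correct, and at the structural level it follows the same strategy as the paper: establish a sup-norm embedding for $\nabla^1 Z_\bullet$ and then estimate each component of the intersection norm by a Leibniz/triangle-inequality splitting. The difference lies in part (ii). The paper embeds $\nabla^1 Z_\varrho \hookrightarrow BUC(J;W^{2-\frac{3}{p}}_p(\d \Gamma^*;\R)) \hookrightarrow BUC(J;BUC^1(\d \Gamma^*;\R))$ -- this is where $p>4$ is used in case (ii) as well -- so that both $f$ and $\d_\sigma f$ are controlled in $L_\infty$ uniformly in time, and the classical product rule together with the fact that Lipschitz functions are pointwise multipliers on $W^{1-\frac{1}{p}}_p(\d \Gamma^*)$ settles the spatial norm by ``direct estimates''. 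You instead content yourself with the weaker embedding $\nabla^1 Z_\varrho \hookrightarrow BUC(J;L_\infty(\d\Gamma^*))$ (which, as you correctly note, already holds for $p>2$) and compensate by invoking a Kato--Ponce/paraproduct tame estimate in the fractional space $W^{2-\frac{1}{p}}_p(\d\Gamma^*)$; this is a valid, if heavier, tool (it holds for $B^s_{p,p}$ with $s>0$ and transfers to the compact one-dimensional manifold by charts, exactly as you indicate). Your route buys a cleaner accounting of where $p>4$ is really needed -- only in case (i) -- at the price of importing harmonic-analysis machinery; the paper's route stays entirely within elementary embeddings and the classical product rule but uses $p>4$ in both parts. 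Your treatment of the temporal Gagliardo seminorm by splitting $f(t)g(t)-f(\tau)g(\tau)$ is exactly the ``triangle inequality'' step the paper alludes to.
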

\begin{proof}
(i) First we use Lemma \ref{lem:Embeddings} to obtain
\begin{align*}
	\nabla^1 Z_u \hookrightarrow BUC(J;W^{1 - \frac{2}{p}}_p(\Gamma^*;\R)) \hookrightarrow BUC(J;BUC(\Gamma^*;\R)),
\end{align*}
where we have used $p > 4$ in the second embedding. We use this embedding and the triangle inequality to prove for $f, g \in \nabla^1 Z_u$
\begin{align*}
	\norm{fg}_{\nabla^1 Z_u} \leq \hat{c} \norm{f}_{\nabla^1 Z_u} \norm{g}_{\nabla^1 Z_u}
\end{align*}
in a straight forward manner. \\
(ii) Consider Lemma \ref{lem:Embeddings} to obtain
\begin{align*}
	\nabla^1 Z_\varrho \hookrightarrow BUC(J;W^{2 - \frac{3}{p}}_p(\d \Gamma^*;\R)) \hookrightarrow BUC(J;BUC^1(\d \Gamma^*;\R)),
\end{align*}
where we used $p > 4$ in the second embedding. Combining this with the product rule one proves for $f, g \in \nabla^1 Z_\varrho$ via direct estimates
\begin{align*}
	\norm{fg}_{\nabla^1 Z_\varrho} \leq \tilde{c} \norm{f}_{\nabla^1 Z_\varrho} \norm{g}_{\nabla^1 Z_\varrho}.
\end{align*}
Further details can be found in \cite{Mue13}.
\end{proof}

\begin{lemma}\label{lem:TechnicalPreliminariesMCF}
Let $J := [0,T]$ and $4 < p < \infty$ and $\mathbb{B}^\mathbb{E}_r(\mathbb{O}) := \left\{\Phi \in \mathbb{E} \left| \norm{\Phi}_{\mathbb{E}} < r\right.\right\}$. Then there exists an $r > 0$ such that $N(\mathbb{B}^\mathbb{E}_r(\mathbb{O})) \subseteq \mathbb{F}$. Moreover, $N \in C^1(\mathbb{B}^\mathbb{E}_r(\mathbb{O});\mathbb{F})$ and $\norm{DN[\mathbb{O}]}_{\mathcal{L}(\mathbb{E},\mathbb{F})} \leq c T^{\frac{1}{q}}$ for some $q > p$, where $DN: \mathbb{B}^\mathbb{E}_r(\mathbb{O}) \longrightarrow \mathcal{L}(\mathbb{E},\mathbb{F})$ denotes the Fr\'echet derivative of $N$.
\end{lemma}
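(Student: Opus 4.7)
The plan is to verify the three assertions in turn: that $N$ sends a small ball in $\mathbb{E}$ into $\mathbb{F}$, that $N$ is Fr\'echet $C^1$ on that ball, and that the operator norm of $DN[\mathbb{O}]$ inherits a positive power of $T$.

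First I would express each geometric quantity appearing in the two nontrivial components of $N$---namely $H_\Gamma,\bar H, V_\Gamma, \skp{n_\Gamma}{n_D}, v_{\d D},\kappa_{\d D}$---as the composition, read off via the curvilinear chart $\Psi$, of a smooth function of the low-order tuple $(\rho,\nabla_{\Gamma^*}\rho,\nabla_{\Gamma^*}^2\rho,\d_t\rho)$ respectively $(\varrho,\d_\sigma\varrho,\d_\sigma^2\varrho,\d_t\varrho)$. These representatives are smooth in their arguments by the smoothness of $\Psi$ and the classical formulas for curvature and velocity, and their linearizations at the origin coincide with the expressions of Lemmas~\ref{lem:LinearVGamma}--\ref{lem:LinearGeodesicCurvature}. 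The third (trivial) component of $N$ needs no argument.

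For the first two assertions, Lemma~\ref{lem:Embeddings} places the low-order arguments uniformly in $BUC([0,T];BUC^1(\Gamma^*))$ (for $u\in Z_u$) and analogously on $\d\Gamma^*$ (for $\varrho\in Z_\varrho$), so that shrinking $r$ keeps the arguments inside a neighborhood on which the smooth representations are well defined. The quasi-linear products of second-order derivatives with lower-order coefficients are handled by the Banach-algebra property of $\nabla^1 Z_u$ and $\nabla^1 Z_\varrho$ from Lemma~\ref{lem:AlgebraMCF}, and the non-local term $\bar H(u)$ decomposes as a bounded linear functional on $L_p(\Gamma^*)$ composed with the Nemytskii operator of $H_\Gamma$, which produces a scalar function of time in $L_p([0,T])$; multiplied by $\mathbf 1\in L_p(\Gamma^*)$ it lies in $X$. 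Combining these ingredients yields $N(\mathbb{B}^\mathbb{E}_r(\mathbb{O}))\subseteq\mathbb{F}$ for sufficiently small $r$, and the chain rule for superposition operators applied to the same structure gives $N\in C^1(\mathbb{B}^\mathbb{E}_r(\mathbb{O});\mathbb{F})$.

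The central observation for the $DN[\mathbb{O}]$-estimate is that, by the construction of $\mathcal{A},\mathcal{B}_0,\mathcal{C}_0$, the highest-order local contributions in each component of $N$ cancel exactly against the terms $\d_t u+\mathcal{A}(D)u$ and $\d_t\varrho+\mathcal{B}_0(D)u+\mathcal{C}_0(D_\d)\varrho$ that have been added to define $N$. What remains in $DN[\mathbb{O}]$ consists of strictly lower-order contributions: in the first component only the nonlocal piece $-D\bar H[\mathbb{O}]$ of Lemma~\ref{lem:LinearHMean} survives, while in the second component the careful tracking of the $\sin\alpha$-factors from Lemmas~\ref{lem:LinearVDD}, \ref{lem:LinearAngle}, \ref{lem:LinearGeodesicCurvature} leaves only terms of strictly lower differential order than the principal operator~$L$. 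To extract the $T$-dependence I would invoke the interpolation embedding
\[
    Z_u\hookrightarrow W^\sigma_p([0,T];W^{2(1-\sigma)}_p(\Gamma^*))
\]
from Lemma~\ref{lem:Embeddings} (together with its $Z_\varrho$-counterpart), with $\sigma\in(0,1)$ chosen just small enough that $W^{2(1-\sigma)}_p$ still controls the residual's differential order. Since $p>4$ there follows a Sobolev embedding $W^\sigma_p([0,T])\hookrightarrow L_{\tilde q}([0,T])$ for some $\tilde q>p$, after which H\"older's inequality $\|\cdot\|_{L_p([0,T])}\le T^{1/p-1/\tilde q}\|\cdot\|_{L_{\tilde q}([0,T])}$ produces the factor $T^{1/q}$ with $q:=(1/p-1/\tilde q)^{-1}>p$.

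The main obstacle lies precisely in this last identification step: one has to verify case by case that every surviving summand in $DN[\mathbb{O}]$ is of strictly lower differential order than the matching part of $L$, so that the interpolation embedding really yields the required strict gain in time regularity. Once this gain is isolated, the H\"older--Sobolev step that extracts $T^{1/q}$ is routine, and the intermediate claims follow from standard Nemytskii and chain-rule arguments.
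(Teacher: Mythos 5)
Your treatment of the first two assertions (the mapping property $N(\mathbb{B}^\mathbb{E}_r(\mathbb{O})) \subseteq \mathbb{F}$ and $N \in C^1$) follows essentially the paper's route: term-by-term verification via the embeddings of Lemma \ref{lem:Embeddings}, the Banach-algebra property of $\nabla^1 Z_u$, $\nabla^1 Z_\varrho$, and superposition-operator arguments. The gap is in the third assertion. The paper's key structural observation is that $DN[\mathbb{O}]$ vanishes \emph{identically} except for the non-local piece: the operators $\mathcal{A}$, $\mathcal{B}_0$, $\mathcal{C}_0$ were defined as the full linearizations at $\rho\equiv 0$, lower-order terms included, so nothing of "strictly lower differential order" survives in the boundary component. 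Your version, in which lower-order local residuals remain in the second component and must be estimated in $Y_0$, sets up a problem your own method cannot close: $Y_0$ carries the fractional time regularity $W^{\frac12-\frac{1}{2p}}_p(J;L_p(\d\Gamma^*))$, and the H\"older step $\norm{\cdot}_{L_p(J)}\le T^{1/p-1/\tilde q}\norm{\cdot}_{L_{\tilde q}(J)}$ controls only the $L_p(J;\cdot)$ part of that norm, not the $W^{\frac12-\frac{1}{2p}}_p$-seminorm in time; extracting a positive power of $T$ there is not routine (and is exactly what the exact cancellation lets the paper avoid). The reduction to $\norm{D\bar H[\mathbb{O}]}_{\mathcal{L}(\mathbb{E},X)}$, with target $X=L_p(J;L_p(\Gamma^*))$ carrying \emph{no} time regularity, is what makes the H\"older-in-time device legitimate.

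The second, independent defect concerns the surviving non-local term itself. By Lemma \ref{lem:LinearHMean}, $D\bar H[\mathbb{O}](u,\varrho)$ contains $\mint_{\Gamma^*}\Delta_{\Gamma^*}u\dH^2$. Your interpolation device $Z_u\hookrightarrow W^\sigma_p(J;W^{2(1-\sigma)}_p(\Gamma^*))$ requires $2(1-\sigma)\ge 2$, i.e.\ $\sigma=0$, to control second derivatives, so it yields no surplus time integrability for this term: $\Delta_{\Gamma^*}u$ lives only in $L_p(J;L_p)$ and H\"older cannot produce $T^{1/q}$ times an $L_r(J)$-norm with $r>p$ from it. The paper's essential extra step is Gauss' theorem on $\Gamma^*$, rewriting $\int_{\Gamma^*}\Delta_{\Gamma^*}u\dH^2=\int_{\d\Gamma^*}\nabla_{\Gamma^*}u\cdot n_{\d\Gamma^*}\dH^1$, which lowers the order to one; only then does the chain $\nabla^1Z_u\hookrightarrow W^{\frac12-\frac1p}_p(J;W^{\frac2p}_p(\Gamma^*))\hookrightarrow L_r(J;L_1(\d\Gamma^*))$ with $r>p$, followed by H\"older in time, deliver $cT^{\frac1q}\norm{u}_{Z_u}$. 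You should add both the exact-cancellation argument (so that only $D\bar H[\mathbb{O}]$ remains, valued in $X$) and the integration by parts for the Laplacian inside the mean-curvature average.
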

\begin{proof}
The linearization we calculated in Section \ref{ssec:LinearMCF} is indeed the Fr\'echet derivative as we will see later in this proof. Our first goal is to show
\begin{align*}
	F(u) & := H_\Gamma(u) - \bar{H}(u) - V_\Gamma(u) + \frac{d}{dt} u + \mathcal{A}(q,D) u \in X, \\
	G(u,\varrho) & := a + b \kappa_{\d D}(\varrho) + \skp{n_\Gamma(u)}{n_D(u)} - v_{\d D}(\varrho) + \frac{d}{dt} \varrho + \mathcal{B}_0(q,D) u + \mathcal{C}_0(q,D_\d) \varrho \in Y_0
\end{align*}
for all $(u,\varrho) \in \mathbb{B}^\mathbb{E}_r(\mathbb{O})$. For $r > 0 $ small enough all the terms appearing in $F$ and $G$ are well-defined and the linear parts of $F$ and $G$ can be omitted since
\begin{itemize}
	\item $\mathcal{A}(D) u \in X$ due to $u \in Z_u \subseteq L_p(J;W^2_p(\Gamma^*;\R))$ and $\mathcal{A}$ is of second order in space.
	\item $\frac{d}{dt} u \in X$ due to $u \in Z_u \subseteq W^1_p(J;L_p(\Gamma^*;\R))$ and $\frac{d}{dt}$ is of first order in time.
	\item $\frac{d}{dt} \varrho \in Y_0$ due to $\varrho \in Z_\varrho \subseteq W^{\frac{3}{2} - \frac{1}{2p}}_p(J;L_p(\d \Gamma^*;\R))$ and $\frac{d}{dt}$ is of first order in time as well as $\varrho \in Z_\varrho \hookrightarrow W^1_p(J;W^{1 - \frac{1}{p}}_p(\d \Gamma^*;\R))$ according to Lemma 4.3 of \cite{DSS08} with $\sigma = \frac{2p}{3p-1}$.
	\item $\mathcal{B}_0(D) u \in Y_0$ due to $u \in Z_u \hookrightarrow W^{\frac{1}{2}}_p(J;W^1_p(\Gamma^*;\R))$ because of Lemma \ref{lem:Embeddings}(i) with $\sigma = \frac{1}{2}$ and $\mathcal{B}_0$ is of first order in space. This leads to
			\begin{align*}
				\mathcal{B}_0(D) u \in W^{\frac{1}{2}}_p(J;L_p(\Gamma^*;\R)) \cap L_p(J;W^1_p(\Gamma^*;\R))
			\end{align*}
			and by (A.24) in \cite{Gru95} the trace operator $\gamma_0$ maps as follows
			\begin{align*}
				\gamma_0: W^{\frac{1}{2}}_p(J;L_p(\Gamma^*;\R)) \cap L_p(J;W^1_p(\Gamma^*;\R)) \longrightarrow Y_0.
			\end{align*}
	\item $\mathcal{C}_0(D_\d) \varrho \in Y_0$ due to $\varrho \in Z_\varrho \subseteq L_p(J;W^{3 - \frac{1}{p}}_p(\d \Gamma^*;\R))$ and $\mathcal{C}_0$ is of second order in space as well as $\varrho \in Z_\varrho \hookrightarrow W^{\frac{1}{2} - \frac{1}{2p}}_p(J;W^2_p(\d \Gamma^*;\R))$ due to Lemma 4.3 of \cite{DSS08} with $\sigma = \frac{p-1}{3p-1}$ and $\mathcal{C}_0$ is of second order in space.
\end{itemize}
Next we want to turn our attention to the two velocities in $F$ and $G$. We first remark
\begin{align*}
	\sup_{t \in J} \sup_{q \in \Gamma^*} \left|n_{\Gamma^*}(u) \cdot \d_w \Psi(u)\right| \leq \sup_{t \in J} \sup_{q \in \Gamma^*} \norm{n_{\Gamma^*}(u)} \norm{\d_w \Psi(u)} = \sup_{t \in J} \sup_{q \in \Gamma^*} \norm{\d_w \Psi(u)} =: c < \infty
\end{align*}
since $J$ is compact, $\d_w \Psi$ is continuous up to the boundary and since we have assumed $0 < \alpha < \pi$. Hence we obtain
\begin{align*}
	\norm{V_\Gamma(u)}_X \leq c \norm{\d_t u(t,q)}_{L_p(J;L_p(\Gamma^*;\R))} \leq c \norm{u(t,q)}_{W^1_p(J;L_p(\Gamma^*;\R))} < \infty.
\end{align*}
Similarly we get for the normal boundary velocity $\norm{v_{\d \Gamma}(\varrho)}_{W^{\frac{1}{2} - \frac{1}{2p}}_p(J;L_p(\d \Gamma^*;\R))} < \infty$ due to $\varrho \in Z_\varrho \hookrightarrow W^{\frac{1}{2} - \frac{1}{2p} + \epsilon}_p(J;C^1(\d \Gamma^*;\R))$ for some $\epsilon > 0$ by Lemma \ref{lem:Embeddings}. This proves $\norm{v_{\d \Gamma}(\varrho)}_{Y_0} < \infty$. \\
Now we consider the angle term and the constant $a$. Since $J$, $\d \Gamma^*$ and $\Gamma^*$ are bounded and
\begin{align*}
	\sup_{t \in J} \sup_{q \in \Gamma^*} \left|\skp{n_\Gamma(u)}{n_D(u)}\right| \leq \sup_{t \in J} \sup_{q \in \Gamma^*} \norm{n_\Gamma(u)} \norm{n_D(u)} = \sup_{t \in J} \sup_{q \in \Gamma^*} 1 = 1 < \infty
\end{align*}
we conclude $\norm{\skp{n_\Gamma(u)}{n_D(u)}}_{Y_0} < \infty$ which follows from the same estimates and embeddings as in Lemma \ref{lem:AlgebraMCF}(ii) and $\norm{a}_{Y_0} < \infty$. \\
Finally, we look at the remaining curvature terms. Due to Lemma \ref{lem:Embeddings} we see that $|u(t,q)|$ and $|\nabla_{\Gamma^*} u(t,q)|$ remain bounded. This shows that for a maybe even smaller $r$ the first fundamental form of all the hypersurfaces in the family $\left(\Gamma_{\rho}(t)\right)_{t \in J}$ is not degenerated. Because of the facts that on the one hand $H_\Gamma(u)$ depends linearly on the second space derivatives of $u$ and on the other hand the coefficients, that involve only $u$ and its first derivatives, are bounded as seen above, we get
\begin{align*}
	\norm{H_\Gamma(u)}_X \leq c \left(\norm{\nabla_{\Gamma^*}^2 u}_X + 1\right) \leq c \left(\norm{u}_{L_p(J;W^2_p(\Gamma^*;\R))} + 1\right) \leq c \left(\norm{u}_{Z_u} + 1\right) < \infty.
\end{align*}
For the non-local mean integral we first take a look at the area. Surely, $\int_{\Gamma(t)}\limits 1 \dH^2$ depends continuously on $t \in [0,T]$. Therefore, we obtain $0 < c \leq \int_{\Gamma(t)}\limits 1 \dH^2 \leq C$. This leads to
\begin{align*}
	\left|\bar{H}(u(t))\right| = \frac{1}{\int_{\Gamma(t)}\limits 1 \dH^2} \left|\int_{\Gamma(t)} H_{\Gamma(t)} \dH^2\right| \leq \frac{1}{c} \left|\int_{\Gamma^*} H_\Gamma(u(t)) J(u(t), \nabla_{\Gamma^*} u(t)) \dH^2\right|,
\end{align*}
where $J$ is some determinant term that includes no second or higher order derivatives of $u$. Thus, the integrand depends linearly on the second space derivatives of $u$ and the coefficients are bounded. The same argumentation as above finally leads to
\begin{align*}
	\norm{\bar{H}(u)}_X & = \left(\int_J \int_{\Gamma^*} |\bar{H}(u)|^p \dH^2 dt\right)^{\frac{1}{p}}
		  \leq C^{\frac{1}{p}} \left(\int_J |\bar{H}(u)|^p dt\right)^{\frac{1}{p}} \\
		& \leq \frac{C^{\frac{1}{p}}}{c} \left(\int_J \left|\int_{\Gamma^*} H_\Gamma(u) J(u, \nabla_{\Gamma^*} u) \dH^2\right|^p dt\right)^{\frac{1}{p}} \\
		& \leq \hat{c} \norm{H_\Gamma(u) J(u, \nabla_{\Gamma^*} u)}_X \leq c \left(\norm{\nabla_{\Gamma^*}^2 u}_X + 1\right)
		  \leq c \left(\norm{u}_{Z_u} + 1\right) < \infty.
\end{align*}
To estimate the geodesic curvature we observe $Z_\varrho \hookrightarrow W^{\frac{1}{p}}_p(J;W^{3 - \frac{3}{p}}_p(\d \Gamma^*;\R))$ if we choose $\sigma = \frac{2}{3p - 1}$ in Lemma 4.3 of \cite{DSS08}. By usual Sobolev embeddings and the assumption $p > 4$ we get in addition
\begin{align*}
	Z_\varrho \hookrightarrow W^{\frac{1}{p}}_p(J;W^{3 - \frac{3}{p}}_p(\d \Gamma^*;\R)) \hookrightarrow BUC(J;W^{3 - \frac{3}{p}}_p(\d \Gamma^*;\R)) \hookrightarrow BUC(J;BUC^2(\d \Gamma^*;\R))
\end{align*}
and hence $|\varrho(t,q)|$, $|\d_{\sigma} \varrho(t,q)|$ and $|\d_{\sigma}^2 \varrho(t,q)|$ remain bounded. The smooth dependence of $\kappa_{\d D}(\varrho)$ on $\varrho$ shows $\norm{\kappa_{\d D}(\varrho)}_{Y_0} < \infty$ in the same way as for $v_{\d \Gamma}(\varrho)$. This finally completes the proof of $N(\mathbb{B}^\mathbb{E}_r(\mathbb{O})) \subseteq \mathbb{F}$. \\
It remains to prove $N \in C^1(\mathbb{B}^\mathbb{E}_r(\mathbb{O});\mathbb{F})$. Here we note that we have calculated in Section \ref{ssec:LinearMCF} the first variations of the parts of $N$. We will denote these variations by the prefix $\delta$. Assuming the Lipschitz continuity of $\delta_\Phi N$ leads us to the Fr\'echet differentiability. Therefore for $N \in C^1(\mathbb{B}^\mathbb{E}_r(\mathbb{O});\mathbb{F})$ the Lipschitz continuity of $\delta_\Phi N$ remains to be proven. To simplify the formulas we look at each term in each component of $N$ separately. \\
Starting with $H_\Gamma$ we know that we can write
\begin{align*}
	H_\Gamma(u) = \sum_{|\alpha|=2} a_\alpha(u, \nabla_{\Gamma^*} u) \d^\alpha u + b(u, \nabla_{\Gamma^*} u)
\end{align*}
with $a_\alpha, b \in C^3(U)$ and $U \subseteq \R \times \R^2$ a closed neighborhood of $0$. Linearizing this we obtain
\begin{align*}
	(\delta_u H_\Gamma(u))(v) & = \sum_{|\alpha|=2} \left(\d_1 a_\alpha(u, \nabla_{\Gamma^*} u) (\d^\alpha u) v + \d^\alpha u \left(\d_2 a_\alpha(u,\nabla_{\Gamma^*} u) \cdot \nabla_{\Gamma^*} v\right)\right. \\
		& \hspace*{12mm} \left.+ a_\alpha(u, \nabla_{\Gamma^*} u) \d^\alpha v\right) + \d_1 b(u, \nabla_{\Gamma^*} u) v + \d_2 b(u, \nabla_{\Gamma^*} u) \cdot \nabla_{\Gamma^*} v.
\end{align*}
Due to $a_\alpha, b \in C^3(U)$ the coefficients $a_\alpha, \d_1 a_\alpha, \d_2 a_\alpha, \d_1 b$ and $\d_2 b$ satisfy a Lipschitz condition on $\bar{B_r(0)} \subseteq \nabla^1 Z_u$, i.e.,
\begin{align*}
	\norm{\d_1 a_\alpha(u, \nabla_{\Gamma^*} u) - \d_1 a_\alpha(\tilde{u}, \nabla_{\Gamma^*} \tilde{u})}_{\nabla^1 Z_u} \leq c \left(\norm{u - \tilde{u}}_{\nabla^1 Z_u} + \norm{\nabla_{\Gamma^*} u - \nabla_{\Gamma^*} \tilde{u}}_{\nabla^1 Z_u}\right)
\end{align*}
for $u, \tilde{u} \in \bar{B_r(0)}$. This Lipschitz condition can be seen via
\begin{align*}
	\norm{f(u) - f(\tilde{u})}_{\nabla^1 Z_u} & = \norm{\int^1_0 \frac{d}{dt} f(\tilde{u} + t(u - \tilde{u})) dt}_{\nabla^1 Z_u} \\
		& \leq c \underbrace{\norm{\int^1_0 f'(\tilde{u} + t(u - \tilde{u})) dt}_{\nabla^1 Z_u}}_{\leq c} \norm{u - \tilde{u}}_{\nabla^1 Z_u} \leq c \norm{u - \tilde{u}}_{\nabla^1 Z_u},
\end{align*}
where we have used Lemma \ref{lem:AlgebraMCF}. Because of $\norm{\bullet}_{\nabla^1 Z_u} \leq c \norm{\bullet}_{Z_u}$, any functions $u, \tilde{u} \in \bar{B_r(0)} \subseteq Z_u$ are also in $\bar{B_{cr}(0)} \subseteq \nabla^1 Z_u$. Hence for $u, \tilde{u} \in \bar{B_r(0)} \subseteq Z_u$ we get
\begin{align*}
	\norm{\delta_u H_\Gamma(u) - \delta_u H_\Gamma(\tilde{u})}_{\mathcal{L}(Z_u,X)}
	& \leq \sum_{|\alpha|=2} \left(\norm{\d_1 a_\alpha(u, \nabla_{\Gamma^*} u) \d^\alpha u - \d_1 a_\alpha(u, \nabla_{\Gamma^*} u) \d^\alpha \tilde{u}}_X\right. \\
	& + \left.\norm{\d_1 a_\alpha(u, \nabla_{\Gamma^*} u) \d^\alpha \tilde{u} - \d_1 a_\alpha(\tilde{u}, \nabla_{\Gamma^*} \tilde{u}) \d^\alpha \tilde{u}}_X\right) \norm{\id}_{\mathcal{L}(Z_u,X)} \\
	& + \sum_{|\alpha|=2}\left(\norm{\d^\alpha u \d_2 a_\alpha(u, \nabla_{\Gamma^*} u) - \d^\alpha \tilde{u} \d_2 a_\alpha(u, \nabla_{\Gamma^*} u)}_X\right. \\
	& + \sum_{|\alpha|=2}\left.\norm{\d^\alpha \tilde{u} \d_2 a_\alpha(u, \nabla_{\Gamma^*} u) - \d^\alpha \tilde{u} \d_2 a_\alpha(\tilde{u}, \nabla_{\Gamma^*} \tilde{u})}_X\right) \norm{\nabla_{\Gamma^*}}_{\mathcal{L}(Z_u,X)} \\
	& + \sum_{|\alpha|=2}\norm{a_\alpha(u, \nabla_{\Gamma^*} u) - a_\alpha(\tilde{u}, \nabla_{\Gamma^*} \tilde{u})}_X \norm{\d^\alpha}_{\mathcal{L}(Z_u,X)} \\
	& + \norm{\d_1 b(u, \nabla_{\Gamma^*} u) - \d_1 b(\tilde{u}, \nabla_{\Gamma^*} \tilde{u})}_X \norm{\id}_{\mathcal{L}(Z_u,X)} \\
	& + \norm{\d_2 b(u, \nabla_{\Gamma^*} u) - \d_2 b(\tilde{u}, \nabla_{\Gamma^*} \tilde{u})}_X \norm{\nabla_{\Gamma^*}}_{\mathcal{L}(Z_u,X)}.
\end{align*}
Since $\nabla^1 Z_u \hookrightarrow L_\infty(J;L_\infty(\Gamma^*;\R))$ we have $\norm{\bullet}_{L_\infty(J;L_\infty(\Gamma^*;\R))} \leq c \norm{\bullet}_{\nabla^1 Z_u}$. Moreover, in the same manner as for the linear parts $\mathcal{A}, \mathcal{B}_0, \mathcal{B}_1, \mathcal{C}_0$ and $\mathcal{C}_1$ we prove $\norm{\id}_{\mathcal{L}(Z_u,X)} < \infty$, $\norm{\nabla_{\Gamma^*}}_{\mathcal{L}(Z_u,X)} < \infty$ and $\norm{\d^\alpha}_{\mathcal{L}(Z_u,X)} < \infty$ for $|\alpha| = 2$, which enables us to continue the inequality above as follows
\begin{align*}
	\norm{\delta_u H_\Gamma(u) - \delta_u H_\Gamma(\tilde{u})}_{\mathcal{L}(Z_u,X)}
		& \leq c \sum_{|\alpha|=2} \underbrace{\norm{\d_1 a_\alpha(u, \nabla_{\Gamma^*} u)}_{L_\infty(J;L_\infty(\Gamma^*))}}_{\leq \tilde{c} \norm{\d_1 a_\alpha(u, \nabla_{\Gamma^*} u)}_{\nabla^1 Z_u} \leq c(r)} \underbrace{\norm{\d^\alpha (u - \tilde{u})}_X}_{\leq \norm{u - \tilde{u}}_{Z_u}} \\
		& + \sum_{|\alpha|=2}\underbrace{\norm{\d_1 a_\alpha(u, \nabla_{\Gamma^*} u) - \d_1 a_\alpha(\tilde{u}, \nabla_{\Gamma^*} \tilde{u})}_{L_\infty(J;L_\infty(\Gamma^*))}}_{\leq \tilde{c} \norm{\ldots}_{\nabla^1 Z_u} \leq \hat{c} \left(\norm{u - \tilde{u}}_{\nabla^1 Z_u} + \norm{\nabla_{\Gamma^*} u - \nabla_{\Gamma^*} \tilde{u}}_{\nabla^1 Z_u}\right)} \underbrace{\norm{\d^\alpha \tilde{u}}_X}_{\leq r} \\
		& + \sum_{|\alpha|=2}\norm{\d_2 a_\alpha(u, \nabla_{\Gamma^*} u)}_{L_\infty(J;L_\infty(\Gamma^*))} \norm{\d^\alpha (u - \tilde{u})}_X \\
		& +\sum_{|\alpha|=2} \norm{\d_2 a_\alpha(u, \nabla_{\Gamma^*} u) - \d_2 a_\alpha(\tilde{u}, \nabla_{\Gamma^*} \tilde{u})}_{L_\infty(J;L_\infty(\Gamma^*))} \norm{\d^\alpha \tilde{u}}_X \\
		& + 3c \Big(\underbrace{\norm{u - \tilde{u}}_{\nabla^1 Z_u}}_{\leq \tilde{c} \norm{u - \tilde{u}}_{Z_u}} + \underbrace{\norm{\nabla_{\Gamma^*} u - \nabla_{\Gamma^*} \tilde{u}}_{\nabla^1 Z_u}}_{\leq \tilde{c} \norm{u - \tilde{u}}_{Z_u}}\Big) \\
		& \leq c(r) \norm{u - \tilde{u}}_{Z_u}.
\end{align*}
This shows Lipschitz continuity of $\delta_u H_\Gamma: \bar{B_r(0)} \subseteq Z_u \longrightarrow \mathcal{L}(Z_u,X)$ and hence $H_\Gamma \in C^1(\bar{B_r(0)},X)$. \\
Similar considerations can be made for $\kappa_{\d D}$, $V_\Gamma$ and $v_{\d D}$. Also in the same manner we see that the angle term $W(u) := \skp{n_\Gamma(u)}{n_D(u)}$ satisfies $W \in C^1(\bar{B_r(0)},\nabla^1 Z_u)$. Taking equation (A.24) from \cite{Gru95} into account we have for the trace operator
\begin{align*}
	\gamma_0: W^{\frac{1}{2}}_p(J;L_p(\Gamma^*)) \cap L_p(J;W^1_p(\Gamma^*)) \longrightarrow W^{\frac{1}{2}-\frac{1}{2p}}_p(J;L_p(\d \Gamma^*)) \cap L_p(J;W^{1-\frac{1}{p}}_p(\d \Gamma^*)),
\end{align*}
which is equivalent to $\gamma_0: \nabla^1 Z_u \longrightarrow Y_0$ and proves $\gamma_0 \circ W \in C^1(\bar{B_r(0)},Y_0)$. \\
As seen before the integral mean of the mean curvature has the form
\begin{align*}
	\bar{H}(u(t)) = \frac{1}{\int_{\Gamma^*}\limits J(u(t), \nabla_{\Gamma^*} u(t)) \dH^2} \int_{\Gamma^*} H_\Gamma(u) J(u(t), \nabla_{\Gamma^*} u(t)) \dH^2,
\end{align*}
where $J$ is some determinant term. Hence we can write
\begin{align*}
	\bar{H}(u) = \int_{\Gamma^*} \sum_{|\alpha|=2} a_\alpha(u, \nabla_{\Gamma^*} u) \d^\alpha u + b(u, \nabla_{\Gamma^*} u) \dH^2
\end{align*}
with $a_\alpha$ and $b$ similar to the considerations for $H_\Gamma$, simply including the terms $J(u, \nabla_{\Gamma^*} u)$ and $\left(\int_{\Gamma^*}\limits J(u(t), \nabla_{\Gamma^*} u(t)) \dH^2\right)^{-1}$. With the same estimates as for $H_\Gamma$ we obtain
\begin{align*}
	\norm{\delta_u \bar{H}(u) - \delta_u \bar{H}(\tilde{u})}_{\mathcal{L}(Z_u,X)}
		& \leq \ldots \leq \int_{\Gamma^*} c(r) \norm{u - \tilde{u}}_{Z_u} \dH^2 \leq \tilde{c}(r) \norm{u - \tilde{u}}_{Z_u}
\end{align*}
showing $\delta_u \bar{H}: \bar{B_r(0)} \subseteq Z_u \longrightarrow \mathcal{L}(Z_u,X)$ is Lipschitz continuous and $\bar{H} \in C^1(\bar{B_r(0)},X)$. All of these continuity statements show $N \in C^1(\mathbb{B}^\mathbb{E}_r(\mathbb{O});\mathbb{F})$ if we choose the radius $r$ appropriately. \\
Now we consider the remaining statement $\norm{DN[\mathbb{O}]}_{\mathcal{L}(\mathbb{E},\mathbb{F})} \leq c T^{\frac{1}{q}}$. First we remark that without the $\bar{H}$-term in $N$ we would obtain $DN[\mathbb{O}] = 0$. Therefore, it suffices to consider $\norm{D\bar{H}[\mathbb{O}]}_{\mathcal{L}(\mathbb{E},X)}$. We estimate with the help of Lemma \ref{lem:LinearHMean}
\begin{align*}
	\norm{D\bar{H}[\mathbb{O}](u,\varrho)}_X
		& \leq \norm{\frac{1}{\int_{\Gamma^*} 1 \dH^2} \int_{\Gamma^*} (\Delta_{\Gamma^*} + |\sigma^*|^2 - H_{\Gamma^*}^2 + \bar{H}(\mathbb{O}) H_{\Gamma^*}) u \dH^2}_X \\
		& + \norm{\frac{1}{\int_{\Gamma^*} 1 \dH^2} \int_{\d \Gamma^*} (H_{\Gamma^*} - \bar{H}(\mathbb{O})) \cot(\alpha) \varrho \dH^1}_X \\
		& \leq A(\Gamma^*)^{\frac{1}{p} - 1} \norm{\int_{\Gamma^*} (\Delta_{\Gamma^*} + \underbrace{|\sigma^*|^2 - H_{\Gamma^*}^2 + \bar{H}(\mathbb{O}) H_{\Gamma^*}}_{\text{bounded}}) u \dH^2}_{L_p(J)} \\
		& + A(\Gamma^*)^{\frac{1}{p} - 1} \norm{\int_{\d \Gamma^*} \underbrace{(H_{\Gamma^*} - \bar{H}(\mathbb{O})) \cot(\alpha)}_{\text{bounded}} \varrho \dH^1}_{L_p(J)} \\
		& \leq \tilde{c} \left(\norm{\int_{\Gamma^*} \Delta_{\Gamma^*} u \dH^2}_{L_p(J)} + \norm{\int_{\Gamma^*} u \dH^2}_{L_p(J)} + \norm{\int_{\d \Gamma^*} \varrho \dH^1}_{L_p(J)}\right).
\end{align*}
Using Gauss' theorem for hypersurfaces we can write the first integral as a lower order boundary integral. Choosing $q > p$ arbitrarily and $r > p$ such that $\frac{1}{p} = \frac{1}{q} + \frac{1}{r}$ and utilizing H\"older's inequality we can continue the estimate above as follows
\begin{align*}
	\norm{D\bar{H}[\mathbb{O}](u,\varrho)}_X
		& \leq \tilde{c} \left(\norm{\int_{\d \Gamma^*} \nabla_{\Gamma^*} u \cdot n_{\d \Gamma^*} \dH^1}_{L_p(J)} + \norm{\int_{\Gamma^*} u \dH^2}_{L_p(J)} + \norm{\int_{\d \Gamma^*} \varrho \dH^1}_{L_p(J)}\right) \\
		& \leq \hat{c} \left(\norm{1}_{L_q(J)} \norm{\int_{\d \Gamma^*} \nabla_{\Gamma^*} u \cdot n_{\d \Gamma^*} \dH^1}_{L_r(J)} + \norm{1}_{L_q(J)} \norm{\int_{\Gamma^*} u \dH^2}_{L_r(J)}\right. \\
		& + \left.\norm{1}_{L_q(J)} \norm{\int_{\d \Gamma^*} \varrho \dH^1}_{L_r(J)}\right) \\
		& \leq c T^{\frac{1}{q}} \left(\norm{\nabla_{\Gamma^*} u}_{L_r(J;L_1(\d \Gamma^*))} + \norm{u}_{L_r(J;L_1(\Gamma^*))} + \norm{\varrho}_{L_r(J;L_1(\d \Gamma^*))}\right).
\end{align*}
As seen before we have $\nabla_{\Gamma^*} u \in \nabla^1 Z_u$ and by Lemma 4.3 of \cite{DSS08} with $\sigma = 1 - \frac{2}{p}$, the trace operator $\gamma_0$ and $r > p > 0$ we obtain the embeddings
\begin{align*}
	\nabla^1 Z_u & = W^{\frac{1}{2}}_p(J;L_p(\Gamma^*)) \cap L_p(J;W^1_p(\Gamma^*)) \hookrightarrow W^{\frac{1}{2}-\frac{1}{p}}_p(J;W^{\frac{2}{p}}_p(\Gamma^*)) \\
					 & \hookrightarrow W^{\frac{1}{2}-\frac{1}{p}}_p(J;W^{\frac{1}{p}}_p(\d \Gamma^*)) \hookrightarrow L_r(J;L_1(\d \Gamma^*)),
\end{align*}
which shows $\norm{\nabla_{\Gamma^*} u}_{L_r(J;L_1(\d \Gamma^*))} \leq c \norm{\nabla_{\Gamma^*} u}_{\nabla^1 Z_u} \leq \hat{c} \norm{u}_{Z_u}$. Without the trace operator in the estimate above we show $\norm{u}_{L_r(J;L_1(\Gamma^*))} \leq \norm{u}_{\nabla^1 Z_u} \leq \hat{c} \norm{u}_{Z_u}$. Finally $\norm{\varrho}_{L_r(J;L_1(\d \Gamma^*))} \leq c \norm{\varrho}_{Z_\varrho}$, because of
\begin{align*}
	Z_\varrho = W^{\frac{3}{2}-\frac{1}{2p}}_p(J;L_p(\d \Gamma^*)) \cap L_p(J;W^{3-\frac{1}{p}}_p(\Gamma^*)) \subseteq W^{\frac{3}{2}-\frac{1}{2p}}_p(J;L_p(\d \Gamma^*)) \hookrightarrow L_r(J;L_1(\d \Gamma^*)).
\end{align*}
Using these three facts we see
\begin{align*}
	\norm{D\bar{H}[\mathbb{O}](u,\varrho)}_X \leq \hat{c} T^{\frac{1}{q}} \left(2 \norm{u}_{Z_u} + \norm{\varrho}_{Z_\varrho}\right) \leq c T^{\frac{1}{q}} \norm{(u,\varrho)}_\mathbb{E}
\end{align*}
proving the desired estimate $\norm{D\bar{H}[\mathbb{O}]}_{\mathcal{L}(\mathbb{E},X)} \leq c T^{\frac{1}{q}}$. For more details on this proof we refer to \cite{Mue13}.
\end{proof}

\begin{rem}
(i) An important fact for the following considerations is that $L$ is an isomorphism. We do not need to consider the condition $g_0(0) - \mathcal{B}_0(D) u_0 - \mathcal{C}_0(D_\d) \varrho_0 \in \pi_1 Z_\varrho$ in Theorem \ref{thm:LocalExistenceGeneral} due to the same arguments as in the proof of Corollary \ref{cor:LocalExistence1}. Moreover, $\mathcal{B}_1(D) u_0 + \mathcal{C}_1(D_\d) \varrho_0 = g_1(0)$ can be dropped, because $g_1 \equiv 0$ and $(u_0,\varrho_0) \in \mathbb{I}$. Due to Theorem \ref{thm:LocalExistenceGeneral} $L$ is an isomorphism between $\mathbb{E}$ and $\mathbb{F} \times \mathbb{I}$. \\
(ii) Although we have not indicated this dependence so far, the spaces $\mathbb{E}$ and $\mathbb{F}$ actually depend on $T$ and should have been better denoted by $\mathbb{E}_T$ and $\mathbb{F}_T$. The same is true for the operators $L$ and $N$. The justification for this notational inexactness will be given in the following Lemma. This will be the first and also last segment where we will use the exact notation to indicate the dependence on $T$.
\end{rem}

In order to get the norm estimates uniformly bounded in $T$, we choose the norms on $\mathbb{E}_T,Y_T,\mathbb{F}_T$ as follows:
\begin{align*}
  \|(u,\rho)\|_{\mathbb{E}_T} &= \|u\|_{Z_u}+ \|\rho\|_{Z_\rho} + \|u|_{t=0}\|_{\pi Z_u}+ \|\rho|_{t=0}\|_{\pi Z_\rho},\\
  \| g\|_{Y_0} &= \|g\|_{W^{\frac12-\frac1{2p}}_p(0,T;L_p(\partial\Gamma^\ast))} +\|g\|_{L_p(0,T;W^{1-\frac1p}_p(\partial\Gamma^\ast))}+ \|g|_{t=0}\|_{\pi_1 Z_\rho},\\
  \|(f,g,0)\|_{\mathbb{F}_T} &= \|f\|_{X}+ \|g\|_{Y_0},
\end{align*}
where $X$ is normed in the standard way.

\begin{lemma}\label{lem:UniformlyMCF}
Let $T_0 > 0$ be fixed and $T \in (0,T_0)$ arbitrary. \\
(i) There exists a bounded extension operator from $\mathbb{F}_T$ to $\mathbb{F}_{T_0}$, i.e., for all $f \in \mathbb{F}_T$ there is a $\tilde{f} \in \mathbb{F}_{T_0}$ with $\left.\tilde{f}\right|_{[0,T]} = f$ and $\norm{\tilde{f}}_{\mathbb{F}_{T_0}} \leq c(T_0) \norm{f}_{\mathbb{F}_T}$. \\
(ii) The operator norm of $L_T^{-1}: \mathbb{F}_T \times \mathbb{I} \longrightarrow \mathbb{E}_T$ is uniformly bounded in $T$. \\
(iii) There exists a bounded extension operator from $\mathbb{E}_T$ to $\mathbb{E}_{T_0}$, i.e., for all $\Phi \in \mathbb{E}_T$ there is some $\tilde{\Phi} \in \mathbb{E}_{T_0}$ with $\left.\tilde{\Phi}\right|_{[0,T]} = \Phi$ and $\norm{\tilde{\Phi}}_{\mathbb{E}_{T_0}} \leq c(T_0) \norm{\Phi}_{\mathbb{E}_T}$. \\
(iv) The uniform estimate $\norm{DN_T[\Phi] - DN[\mathbb{O}]}_{\mathcal{L}(\mathbb{E}_T,\mathbb{F}_T)} \leq c(T_0) \norm{\Phi}_{\mathbb{E}_T} < \infty$ holds for all $\Phi \in B^{\mathbb{E}_T}_r(0)$.
\end{lemma}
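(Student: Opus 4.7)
The plan is to prove (i) and (iii) by constructing explicit universal extension-in-time operators, then to deduce (ii) from the isomorphism statement in Theorem \ref{thm:LocalExistenceGeneral} on the fixed interval $[0,T_0]$, and finally to reduce (iv) to the Lipschitz continuity of $DN$ already established in Lemma \ref{lem:TechnicalPreliminariesMCF}, once again by lifting everything to $[0,T_0]$.

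For (i) and (iii) I fix once and for all a cutoff $\eta\in C_c^\infty(\R)$ with $\eta\equiv 1$ on $[0,T_0]$ and $\supp\eta\subseteq(-1,T_0+1)$, and send a scalar time-dependent component $h$ defined on $[0,T]$ to $E_T h$, equal to $\eta(t)h(t)$ on $[0,T]$, to $\eta(t)h(2T-t)$ on $[T,\min\{2T,T_0+1\}]$, and to zero otherwise on $[0,T_0]$. This is an even reflection across $t=T$ followed by a smooth cutoff; reflection preserves $L_p$ and $W^\sigma_p$ for $\sigma\in[0,1]$ on the finite interval $[0,2T]\subseteq[0,2T_0]$, and multiplication by $\eta$ contributes only $T_0$-dependent constants, so componentwise application yields bounded extension operators on the time-anisotropic intersection spaces entering the definitions of $X$, $Y_0$, $Z_u$, $Z_\varrho$ with operator norm at most some $c(T_0)$. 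Since $E_T$ equals the identity on $[0,T]$, traces at $t=0$ are preserved, so the trace pieces of the $\mathbb{F}_T$ and $\mathbb{E}_T$ norms are handled automatically and both (i) and (iii) follow.

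For (ii) I take $(f,g_0,0,u_0,\varrho_0)\in\mathbb{F}_T\times\mathbb{I}$, extend $(f,g_0)$ to $[0,T_0]$ via (i), and apply Theorem \ref{thm:LocalExistenceGeneral} (via Corollary \ref{cor:LocalExistence1}) on $[0,T_0]$. This produces a unique $(\tilde u,\tilde\varrho)\in\mathbb{E}_{T_0}$ whose norm is at most $C(T_0)$ times the extended data norm, hence at most $C(T_0)c(T_0)\,\norm{(f,g_0,0,u_0,\varrho_0)}_{\mathbb{F}_T\times\mathbb{I}}$. Since $L$ is local in time, the restriction of $(\tilde u,\tilde\varrho)$ to $[0,T]$ solves the linear system on $[0,T]$ with the original right-hand side and, by the uniqueness part of Corollary \ref{cor:LocalExistence1}, equals $L_T^{-1}(f,g_0,0,u_0,\varrho_0)$. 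Since restriction is a contraction $\mathbb{E}_{T_0}\to\mathbb{E}_T$, the uniform bound $\norm{L_T^{-1}}_{\mathcal{L}(\mathbb{F}_T\times\mathbb{I},\mathbb{E}_T)}\le C(T_0)c(T_0)$ follows.

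For (iv) the decisive observation is that $N$ is local in time (the non-locality of the mean-curvature term $\bar H$ is purely spatial), so $N_{T_0}(\tilde\Phi)|_{[0,T]}=N_T(\tilde\Phi|_{[0,T]})$ for every $\tilde\Phi\in\mathbb{E}_{T_0}$ and, by differentiation in $\tilde\Phi$, the same identity holds for $DN$. Given $\Phi\in\mathbb{B}^{\mathbb{E}_T}_r(\mathbb{O})$ and a test vector $\Psi\in\mathbb{E}_T$, I extend both to $\tilde\Phi,\tilde\Psi\in\mathbb{E}_{T_0}$ via (iii) with $\norm{\tilde\Phi}_{\mathbb{E}_{T_0}}\le c(T_0)\norm{\Phi}_{\mathbb{E}_T}$ and analogously for $\Psi$. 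Shrinking $r$ so that $c(T_0)\,r$ remains within the ball on which Lemma \ref{lem:TechnicalPreliminariesMCF}, applied at the $T_0$-level, has proved Lipschitz continuity of $DN_{T_0}$ with some constant $L(T_0)$, we obtain
\[\norm{DN_{T_0}[\tilde\Phi]-DN_{T_0}[\mathbb{O}]}_{\mathcal{L}(\mathbb{E}_{T_0},\mathbb{F}_{T_0})}\le L(T_0)\norm{\tilde\Phi}_{\mathbb{E}_{T_0}}\le L(T_0)c(T_0)\norm{\Phi}_{\mathbb{E}_T}.\]
Evaluating on $\tilde\Psi$, restricting to $[0,T]$, exploiting that restriction is a contraction $\mathbb{F}_{T_0}\to\mathbb{F}_T$, and taking the supremum over $\Psi$ in the unit ball of $\mathbb{E}_T$ yields (iv) with the $T$-independent constant $L(T_0)c(T_0)^2$. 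The main obstacle is the uniform control of the reflection extension on the fractional-order Sobolev-Slobodeckij spaces $W^{\frac12-\frac1{2p}}_p$, $W^{1-\frac1{2p}}_p$, $W^{\frac32-\frac1{2p}}_p$ in time that enter $Y_0$ and $Z_\varrho$; bounding the reflected-side contribution in the Slobodeckij double integral directly shows that reflection on $[0,2T]$ is bounded with a constant independent of $T\in(0,T_0)$, after which the remaining pieces are bookkeeping.
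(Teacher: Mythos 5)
Your reduction of (ii) to (i), and of (iv) to (iii) via the time-locality of $L$ and $N$, uniqueness of solutions, and the Lipschitz bound from Lemma \ref{lem:TechnicalPreliminariesMCF}, is exactly the paper's argument. The gap is in the construction of the extension operators themselves. First, for (iii): the component $Z_\varrho$ carries the time regularity $W^{\frac{3}{2}-\frac{1}{2p}}_p(J;L_p(\d\Gamma^*))$, and since $\frac{3}{2}-\frac{1}{2p} > 1+\frac{1}{p}$ for $p>3$, the even reflection across $t=T$ does not land in this space: it generically produces a jump of size $2|\d_t\varrho(T)|$ in the first time derivative at $t=T$, and such a corner is admissible in $W^s_p$ only for $s<1+\frac{1}{p}$. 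Your own caveat that reflection preserves $W^\sigma_p$ only for $\sigma\in[0,1]$ already excludes $Z_\varrho$. Second, even for the components with $\sigma\in(\frac{1}{p},1)$ (namely $Y_0$), the construction breaks down when $2T<T_0$: the reflected function equals $f_2(0)\neq 0$ at $t=2T$, your cutoff $\eta$ is identically $1$ on $[0,T_0]$ and hence does nothing there, and setting the extension to zero beyond $2T$ creates a jump, which is not in $W^{\frac{1}{2}-\frac{1}{2p}}_p$ because $\frac{1}{2}-\frac{1}{2p}>\frac{1}{p}$. Replacing $\eta$ by a cutoff adapted to the length-$T$ reflected interval does not rescue the argument, since its multiplier norm on $W^\sigma_p$ blows up like $T^{\frac{1}{p}-\sigma}$ as $T\to 0$. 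This truncation issue is precisely the non-trivial content of the lemma; it is not bookkeeping.

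The paper circumvents both problems. For (i) it continues $f_2$ past $t=T$ not by reflection-and-truncation but by the solution $\hat g$ of a heat equation with initial value $f_2(T)$, which is globally defined on the remaining interval, matches the trace at $t=T$, and is controlled by $\norm{f_2(T)}_{\pi_1 Z_\varrho}\le c\norm{f_2}_{Y_0^T}$ uniformly in $T$ by Amann's trace theory. And it never constructs a direct extension on $Z_u\times Z_\varrho$: part (iii) is obtained by applying $L_T$, extending the resulting data via (i), and solving on $[0,T_0]$, so that (ii) and (iii) both follow from (i) together with maximal regularity and uniqueness. If you insist on a direct extension for (iii), you would need at least a higher-order (Hestenes-type) reflection matching the first time derivative, and you would still have to resolve the continuation past $t=2T$ without a $T$-dependent constant.
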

\begin{proof}
(i) Let $(f_1, f_2, 0) \in \mathbb{F}_T$. To define the extension we solve
\begin{align}\label{eq:ExtensionEq}
	\frac{d}{dt} \hat{g}(t) - \d_\sigma^2 \hat{g}(t) &= 0		  & \hspace*{-9mm} &\text{on } [0,T_0] \times \d \Gamma^*, \\
													  \hat{g}(0) &= f_2(T) & \hspace*{-9mm} &\text{on } \d \Gamma^*, \notag
\end{align}
where the trace in $t = T$ of a function $f_2 \in \tilde{Y}_0^T$ is an element of $\pi_1 Z_\varrho$ (cf. (A.25) of \cite{Gru95}). We obtain a unique
\begin{align*}
	\hat{g} \in \tilde{Y}_0^{T_0} := W^1_p((0,T_0); W^{-1 - \frac{1}{p}}_p(\d \Gamma^*;\R)) \cap L_p((0,T_0); W^{1 - \frac{1}{p}}_p(\d \Gamma^*;\R))\hookrightarrow Y_0^{T_0}
\end{align*}
with $\norm{\hat{g}}_{\tilde{Y}_0^{T_0}} \leq c(T_0) \norm{f_2(T)}_{\pi_1 Z_\varrho}$. We define the extension $(\tilde{f}_1, \tilde{f}_2, 0) \in \mathbb{F}_{T_0}$ by
\begin{align*}
	(\tilde{f}_1, \tilde{f}_2, 0) := \begin{cases}
													(f_1, f_2, 0)          & \text{for } t \in [0,T] \\
													(0, \hat{g}(t - T), 0) & \text{for } t \in (T,T_0]
												\end{cases}
\end{align*}
and have the estimate
\begin{align*}
	\norm{(\tilde{f}_1, \tilde{f}_2, 0)}_{\mathbb{F}_{T_0}}
		& \leq \norm{(f_1, f_2, 0)}_{\mathbb{F}_T} + \norm{(0, \hat{g}, 0)}_{\mathbb{F}_{T_0}} = \norm{(f_1, f_2, 0)}_{\mathbb{F}_T} + \norm{\hat{g}}_{Y_0^{T_0}} \\
		& \leq \norm{(f_1, f_2, 0)}_{\mathbb{F}_T} + c(T_0) \norm{f_2(T)}_{\pi_1 Z_\varrho} \leq \hat{c}(T_0) \norm{(f_1, f_2, 0)}_{\mathbb{F}_T}
\end{align*}
where the uniform estimate $\norm{f_2(T)}_{\pi_1 Z_\varrho} \leq c \norm{f_2}_{{Y}_0^T}$ follows from Theorem III.4.10.2 of \cite{Ama95} and Lemma 7.2 of \cite{Ama05}. \\[1ex]
(ii) We know that for $(f,\Phi_0) \in \mathbb{F}_T \times \mathbb{I}$ there is a unique solution $\Phi \in \mathbb{E}_T$ of $L_T \Phi = (f,\Phi_0)$. We use the extension $\tilde{f} \in \mathbb{F}_{T_0}$ from (i) to obtain a unique solution $\tilde{\Phi} \in \mathbb{E}_{T_0}$ such that $L_{T_0} \tilde{\Phi} = (\tilde{f},\Phi_0)$. Comparing $\Phi$ and $\tilde{\Phi}$ we see by the uniqueness that $\left.\tilde{\Phi}\right|_{[0,T]} = \Phi$ holds. Therefore we obtain the following estimate
\begin{align*}
	\norm{L_T^{-1} (f,\Phi_0)}_{\mathbb{E}_T} & = \norm{\Phi}_{\mathbb{E}_T} \leq \norm{\tilde{\Phi}}_{\mathbb{E}_{T_0}} = \norm{L_{T_0}^{-1} (\tilde{f}, \Phi_0)}_{\mathbb{E}_{T_0}} \\
												& \leq c(T_0) \left(\norm{\tilde{f}}_{\mathbb{F}_{T_0}} + \norm{\Phi_0}_{\mathbb{I}}\right) \leq \tilde{c}(T_0) \left(\norm{f}_{\mathbb{F}_T} + \norm{\Phi_0}_{\mathbb{I}}\right).
\end{align*}
This proves
\begin{align*}
	\sup_{T \in (0,T_0]} \norm{L_T^{-1}}_{\mathcal{L}(\mathbb{F}_T \times \mathbb{I},\mathbb{E}_T)} \leq \tilde{c}(T_0) < \infty,
\end{align*}
where it is important to note that $\tilde{c}(T_0)$ only depends on the fixed $T_0$ but not on $T$. \\[1ex]
(iii) For $\Phi \in \mathbb{E}_T$ we define $(f, \Phi_0) := L_T \Phi \in \mathbb{F}_T \times \mathbb{I}$ and use the extension from (i) to obtain $(\tilde{f},\Phi_0) \in \mathbb{F}_{T_0} \times \mathbb{I}$. Solving $L_{T_0} \tilde{\Phi} = (\tilde{f},\Phi_0)$ on $[0,T_0]$ leads to a unique $\tilde{\Phi} \in \mathbb{E}_{T_0}$. Due to the uniqueness on $[0,T]$ we have $\left.\tilde{\Phi}\right|_{[0,T]} = \Phi$. Moreover, we use $\norm{\tilde{f}}_{\mathbb{F}_{T_0}} \leq c(T_0) \norm{f}_{\mathbb{F}_T}$ from (i) to end up with
\begin{align*}
	\norm{\tilde{\Phi}}_{\mathbb{E}_{T_0}} & \leq c(T_0) \left(\norm{\tilde{f}}_{\mathbb{F}_{T_0}} + \norm{\Phi_0}_{\mathbb{I}}\right) \leq \hat{c}(T_0) \left(\norm{f}_{\mathbb{F}_T} + \norm{\Phi_0}_{\mathbb{I}}\right) \\
		& = \hat{c}(T_0) \norm{(f, \Phi_0)}_{\mathbb{F}_T \times \mathbb{I}} = \hat{c}(T_0) \norm{L_T \Phi}_{\mathbb{F}_T \times \mathbb{I}} \leq \tilde{c}(T_0) \norm{L_T} \norm{\Phi}_{\mathbb{E}_T},
\end{align*}
where $\norm{L_T} \leq c$ follows from direct estimates since the coefficients are uniformly bounded. \\[1ex]
(iv) Via the extension from (iii) we see
\begin{align*}
	N_T(\Phi) = N_T\left(\left.\tilde{\Phi}\right|_{[0,T]}\right) = \left.N_{T_0}\left(\tilde{\Phi}\right)\right|_{[0,T]}
\end{align*}
and hence $DN_T[\Phi](v) = \left.DN_{T_0}[\tilde{\Phi}](\tilde{v})\right|_{[0,T]}$. Therefore we get for the norms the following estimate
\begin{align*}
	\norm{DN_T[\Phi](v) - DN_T[\mathbb{O}](v)}_{\mathbb{F}_T} & \leq \norm{DN_{T_0}[\tilde{\Phi}](\tilde{v}) - DN_{T_0}[\mathbb{O}](\tilde{v})}_{\mathbb{F}_{T_0}} \\
		& \leq \norm{DN_{T_0}[\tilde{\Phi}] - DN_{T_0}[\mathbb{O}]}_{\mathcal{L}(\mathbb{E}_{T_0},\mathbb{F}_{T_0})} \norm{\tilde{v}}_{\mathbb{E}_{T_0}} \\
		& \leq c(T_0) \norm{DN_{T_0}[\tilde{\Phi}] - DN_{T_0}[\mathbb{O}]}_{\mathcal{L}(\mathbb{E}_{T_0},\mathbb{F}_{T_0})} \norm{v}_{\mathbb{E}_T} \\
		& \leq c(T_0) \tilde{c}(T_0) \norm{\tilde{\Phi}}_{\mathbb{E}_{T_0}} \norm{v}_{\mathbb{E}_T} \\
		& \leq \underbrace{c(T_0)^2 \tilde{c}(T_0)}_{=: \hat{c}(T_0)} \norm{\Phi}_{\mathbb{E}_T} \norm{v}_{\mathbb{E}_T}.
\end{align*}
This leads to $\norm{DN_T[\Phi]- DN_T[\mathbb{O}]}_{\mathcal{L}(\mathbb{E}_T,\mathbb{F}_T)} \leq \hat{c}(T_0) \norm{\Phi}_{\mathbb{E}_T} < \infty$.
\end{proof}

The two recently proven lemmas are the main tools for the application of the contraction mapping principle.

\begin{lemma}\label{lem:FixpointNonlinearMCF}
Let $4 < p < \infty$ and $J := [0,T]$ where $T > 0$ must be chosen sufficiently small. Then there exists some $\epsilon > 0$ such that for each $\Phi_0 \in \mathbb{I}$ with $\norm{\Phi_0}_\mathbb{I} < \epsilon$ there exists a unique solution $\Phi = (u, \varrho) \in \mathbb{E}$ of the equation $L \Phi = (N(\Phi),\Phi_0)$.
\end{lemma}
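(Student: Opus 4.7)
The plan is to rewrite $L\Phi = (N(\Phi),\Phi_0)$ as a fixed-point equation
\begin{align*}
\Phi = K(\Phi) := L^{-1}(N(\Phi),\Phi_0)
\end{align*}
and to apply Banach's contraction mapping principle on a closed ball $\overline{B^\mathbb{E}_R(\mathbb{O})}$. Since $L\colon \mathbb{E} \to \mathbb{F}\times \mathbb{I}$ is an isomorphism (see the remark following Lemma \ref{lem:TechnicalPreliminariesMCF}), $K$ is well-defined as soon as $R$ is smaller than the $C^1$-radius supplied by Lemma \ref{lem:TechnicalPreliminariesMCF}, and its fixed points correspond precisely to solutions of the nonlinear system (\ref{eq:NonLinearMCF1})--(\ref{eq:NonLinearMCF5}).

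My first step is to fix $T_0 > 0$ once and for all and restrict to $T \in (0,T_0]$, so that by Lemma \ref{lem:UniformlyMCF}(ii) there is a constant $M = M(T_0)$ with $\norm{L_T^{-1}} \leq M$ uniformly in $T$. For the contraction step I would combine the mean value inequality in Banach spaces with the pointwise bound
\begin{align*}
\norm{DN[\Phi]}_{\mathcal{L}(\mathbb{E},\mathbb{F})} \leq \norm{DN[\Phi] - DN[\mathbb{O}]}_{\mathcal{L}(\mathbb{E},\mathbb{F})} + \norm{DN[\mathbb{O}]}_{\mathcal{L}(\mathbb{E},\mathbb{F})} \leq c(T_0)R + cT^{1/q},
\end{align*}
which holds on $\overline{B^\mathbb{E}_R(\mathbb{O})}$ by Lemma \ref{lem:UniformlyMCF}(iv) together with the $T^{1/q}$-estimate in Lemma \ref{lem:TechnicalPreliminariesMCF}. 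This yields
\begin{align*}
\norm{K(\Phi_1) - K(\Phi_2)}_{\mathbb{E}} \leq M \bigl(c(T_0)R + cT^{1/q}\bigr) \norm{\Phi_1 - \Phi_2}_{\mathbb{E}},
\end{align*}
so that first picking $R$ small, then $T$ small, forces the prefactor to be at most $\tfrac{1}{2}$.

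For the self-map property I decompose $N(\Phi) = (N(\Phi) - N(\mathbb{O})) + N(\mathbb{O})$. The first summand is absorbed by the contraction estimate applied with $\Phi_2 = \mathbb{O}$, yielding a bound $(c(T_0)R + cT^{1/q})R$. The second summand $N(\mathbb{O})$ consists of time-independent, smooth quantities on $\Gamma^*$ and $\d \Gamma^*$ (its components are essentially $H_{\Gamma^*} - \bar{H}^*$ and $a + b \kappa_{\d D^*} + \skp{n_{\Gamma^*}}{n_{D^*}}$), so a direct computation gives $\norm{N(\mathbb{O})}_{\mathbb{F}_T} \leq c_\ast T^{1/p}$. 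Combining,
\begin{align*}
\norm{K(\Phi)}_{\mathbb{E}} \leq M\bigl((c(T_0)R + cT^{1/q})R + c_\ast T^{1/p} + \norm{\Phi_0}_{\mathbb{I}}\bigr),
\end{align*}
and we enforce this to be at most $R$ by the nested choice: first $R$ small (to dominate $Mc(T_0)R^2$ by $R/4$), then $T$ small (so that $McT^{1/q}R$ and $Mc_\ast T^{1/p}$ are each at most $R/4$), and finally $\epsilon$ small (so that $M\epsilon \leq R/4$). Banach's theorem then produces a unique fixed point $\Phi \in \overline{B^\mathbb{E}_R(\mathbb{O})}$.

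The main obstacle, and the reason for the elaborate machinery of Lemma \ref{lem:UniformlyMCF}, is the ordering of these small parameters: one needs the $T$-uniform bound on $L^{-1}$ because otherwise shrinking $T$ would enlarge $\norm{L^{-1}}$ and defeat the smallness gained from $T^{1/q}$ and $T^{1/p}$. A secondary subtlety is that $N(\mathbb{O}) \not\equiv 0$ (since $\Gamma^*$ is not assumed stationary), so the argument cannot anchor at $\Phi \equiv \mathbb{O}$ without the $T^{1/p}$ cushion obtained from time integration of the time-independent residual.
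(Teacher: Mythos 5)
Your proposal is correct and follows essentially the same route as the paper: the same fixed-point reformulation $K(\Phi)=L^{-1}(N(\Phi),\Phi_0)$, the same use of Lemma \ref{lem:UniformlyMCF} for the $T$-uniform bound on $L^{-1}$ and the $\norm{DN[\Phi]-DN[\mathbb{O}]}\leq c(T_0)\norm{\Phi}$ estimate, the same $T^{1/q}$ and $T^{1/p}$ smallness for $DN[\mathbb{O}]$ and $N(\mathbb{O})$, and the same nested choice of radius, time, and $\epsilon$. The only cosmetic difference is that you contract on the closed ball rather than on the affine slice $\mathbb{X}_r=\{\Phi\in\mathbb{B}^{\mathbb{E}}_r(\mathbb{O})\mid\Phi(0)=\Phi_0\}$, which is harmless since $K$ automatically outputs functions with initial data $\Phi_0$.
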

\begin{proof}
For simplicity we will skip the index $T$ of $L,N$, and the function spaces again in the following.
The equation $L \Phi = (N(\Phi),\Phi_0)$ is equivalent to the fixed point problem $K(\Phi) = \Phi$, where
\begin{align*}
	K(\Phi) := L^{-1} (N(\Phi),\Phi_0) \qquad \forall \, \Phi \in \mathbb{B}^\mathbb{E}_r(\mathbb{O}).
\end{align*}
We set $\mathbb{X}_r := \left\{\Phi \in \mathbb{B}^\mathbb{E}_r(\mathbb{O}) \left| \ \Phi(0) = \Phi_0\right.\right\}$. By Lemma \ref{lem:UniformlyMCF}(iv) we can choose $r > 0$ independent of $T$ such that
\begin{align*}
	\sup_{\Psi \in \mathbb{B}^\mathbb{E}_r(\mathbb{O})} \norm{DN[\Psi]}_{\mathcal{L}(\mathbb{E};\mathbb{F})} \leq \frac{1}{4 \sup_{T \in [0,T_0]}\limits \norm{L^{-1}}} + \norm{DN[\mathbb{O}]}_{\mathcal{L}(\mathbb{E};\mathbb{F})}.
\end{align*}
Then we see that for all $T \in [0,T_0]$ we have
\begin{align*}
	\sup_{\Psi \in \mathbb{B}^\mathbb{E}_r(\mathbb{O})} \norm{DN[\Psi]}_{\mathcal{L}(\mathbb{E};\mathbb{F})} \leq \frac{1}{4 \norm{L^{-1}}} + \norm{DN[\mathbb{O}]}_{\mathcal{L}(\mathbb{E};\mathbb{F})}.
\end{align*}
Before stating the main estimate we have to look at $\norm{N(\mathbb{O})}_\mathbb{F}$. Here we observe that
\begin{align*}
	\norm{N(\mathbb{O})}_\mathbb{F} = T^{\frac{1}{p}} \norm{H_{\Gamma^*} - \bar{H}(\mathbb{O})}_{L_p(\Gamma^*;\R)} + T^{\frac{1}{p}} \norm{a + b \kappa_{\d D^*} + \skp{n_{\Gamma^*}}{n_{D^*}}}_{W^{1 - \frac{1}{p}}_p(\d \Gamma^*; \R)}
\end{align*}
because $H_{\Gamma^*}, \bar{H}(\mathbb{O}), a, b \kappa_{\d D^*}$ and $\skp{n_{\Gamma^*}}{n_{D^*}}$ are time-independent. Hence $\norm{N(\mathbb{O})}_\mathbb{F} \xrightarrow[T \rightarrow 0]{} 0$. This fact and Lemma \ref{lem:TechnicalPreliminariesMCF} show that for a sufficiently small time interval $[0,T]$ we get $\norm{N(\mathbb{O})}_\mathbb{F} \leq \epsilon$ and $\norm{DN[\mathbb{O}]}_{\mathcal{L}(\mathbb{E};\mathbb{F})} \leq \epsilon$ for arbitrarily small $\epsilon > 0$. We use these facts in the estimate
\begin{align*}
	\norm{K(\Phi)}_\mathbb{E} & \leq \norm{L^{-1}} \left(\norm{N(\Phi)}_\mathbb{F} + \norm{\Phi_0}_\mathbb{I}\right)
		  \leq \norm{L^{-1}} \left(\norm{N(\Phi) - N(\mathbb{O})}_\mathbb{F} + \norm{N(\mathbb{O})}_\mathbb{F} + \norm{\Phi_0}_\mathbb{I}\right) \\
		& \leq \norm{L^{-1}} \left(\sup_{\Psi \in \mathbb{B}^\mathbb{E}_r(\mathbb{O})} \norm{DN[\Psi]}_{\mathcal{L}(\mathbb{E};\mathbb{F})} \norm{\Phi}_\mathbb{E} + \norm{N(\mathbb{O})}_\mathbb{F} + \norm{\Phi_0}_\mathbb{I}\right) \\
		& \leq \frac{1}{4} \norm{\Phi}_\mathbb{E} + \norm{L^{-1}} \norm{DN[\mathbb{O}]}_{\mathcal{L}(\mathbb{E};\mathbb{F})} \norm{\Phi}_\mathbb{E} + \norm{L^{-1}} \norm{N(\mathbb{O})}_\mathbb{F} + \norm{L^{-1}} \norm{\Phi_0}_\mathbb{I} \\
		& \leq \frac{r}{4} + \norm{L^{-1}} r \epsilon + 2 \norm{L^{-1}} \epsilon
\end{align*}
for every $\Phi \in \mathbb{X}_r$. By choosing
\begin{align*}
	\epsilon(r) \leq \frac{\min\{1,r\}}{4 \sup_{T \in [0,T_0]}\limits \norm{L^{-1}}}
\end{align*}
independent of $T$, we get $\norm{K(\Phi)} \leq \frac{r}{4} + \frac{r}{4} + \frac{r}{2} = r$, i.e., $K(\mathbb{X}_r) \subseteq \mathbb{X}_r$. To see that $K$ is contractive, we use Lemma \ref{lem:TechnicalPreliminariesMCF} again and observe that for all $\Phi_1, \Phi_2 \in \mathbb{X}_r$ the following holds
\begin{align*}
	\norm{K(\Phi_1) - K(\Phi_2)}_\mathbb{E} & \leq \norm{L^{-1}} \norm{N(\Phi_1) - N(\Phi_2)}_\mathbb{F} \\
		& \leq \norm{L^{-1}} \sup_{\Psi \in \mathbb{B}^\mathbb{E}_r(\mathbb{O})} \norm{DN[\Psi]}_{\mathcal{L}(\mathbb{E};\mathbb{F})} \norm{\Phi_1 - \Phi_2}_\mathbb{E} \\
		& \leq \frac{1}{4} \norm{\Phi_1 - \Phi_2}_\mathbb{E} + \norm{L^{-1}} \norm{DN[\mathbb{O}]}_{\mathcal{L}(\mathbb{E};\mathbb{F})} \norm{\Phi_1 - \Phi_2}_\mathbb{E} \\
		& \leq \frac{1}{4} \norm{\Phi_1 - \Phi_2}_\mathbb{E} + c \norm{L^{-1}} T^{\frac{1}{q}} \norm{\Phi_1 - \Phi_2}_\mathbb{E}.
\end{align*}
Choosing $T$ smaller than $\left(\frac{1}{4 c \norm{L^{-1}}}\right)^q$ we see $\norm{K(\Phi_1) - K(\Phi_2)}_\mathbb{E} \leq \frac{1}{2} \norm{\Phi_1 - \Phi_2}_\mathbb{E}$ and therefore $K: \mathbb{X}_r \longrightarrow \mathbb{X}_r$ is a contraction and the assertion follows from the contraction mapping principle.
\end{proof}

Transforming this statement into our original situation we can establish the following theorem.

\begin{thm}\label{thm:ShortTimeExistenceMCF}
Let $T > 0$ be sufficiently small and $4 < p < \infty$. Then there exists an $\epsilon > 0$ such that for each $\rho_0 \in \pi Z_u$ with $\rho_0|_{\d \Gamma^*} \in \pi Z_\varrho$ and $\norm{\rho_0}_{\pi Z_u} + \norm{\rho_0|_{\d \Gamma^*}}_{\pi Z_\varrho} < \epsilon$ there exists a unique solution $\rho \in Z_u$ with $\rho|_{\d \Gamma^*} \in Z_\varrho$ of the system
\begin{align*}
	V_\Gamma(\rho(t)) &= H_\Gamma(\rho(t)) - \bar{H}(\rho(t)) & &\text{in } [0,T] \times \Gamma^*, \\
	v_{\d D}(\rho(t)) &= a + b \kappa_{\d D}(\rho(t)) + \skp{n_\Gamma(\rho(t))}{n_D(\rho(t))} & &\text{on } [0,T] \times \d \Gamma^*, \\
	\rho(0) &= \rho_0 & &\text{in } \Gamma^*.
\end{align*}
\end{thm}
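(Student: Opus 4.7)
The plan is to obtain this theorem essentially as a repackaging of Lemma \ref{lem:FixpointNonlinearMCF}, which already delivers the existence and uniqueness of a fixed point $\Phi=(u,\varrho)\in\mathbb{E}$ of $L\Phi=(N(\Phi),\Phi_0)$ for sufficiently small initial data $\Phi_0\in\mathbb{I}$. The main task here is therefore the translation step: given $\rho_0\in\pi Z_u$ with $\rho_0|_{\d\Gamma^*}\in\pi Z_\varrho$, I set $\Phi_0:=(\rho_0,\rho_0|_{\d\Gamma^*})$. The compatibility $u_0|_{\d\Gamma^*}=\varrho_0$ built into the definition of $\mathbb{I}$ holds trivially by construction, and $\|\Phi_0\|_{\mathbb{I}} =\|\rho_0\|_{\pi Z_u}+\|\rho_0|_{\d\Gamma^*}\|_{\pi Z_\varrho}$ is exactly the quantity assumed to be smaller than $\epsilon$, so I can choose the $\epsilon$ of the theorem to be at most the $\epsilon$ provided by Lemma \ref{lem:FixpointNonlinearMCF}. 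Applying that lemma yields a unique $\Phi=(u,\varrho)\in Z_u\times Z_\varrho$.

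Next I unpack what $L\Phi=(N(\Phi),\Phi_0)$ says component by component, recalling that $L$ is built from the linearized operators and $N$ adds the nonlinear parts while subtracting off exactly those same linear operators. The first component gives $\tfrac{d}{dt}u+\mathcal{A}(D)u = H_\Gamma(u)-\bar H(u)-V_\Gamma(u)+\tfrac{d}{dt}u+\mathcal{A}(D)u$, in which the linear parts cancel to leave $V_\Gamma(u)=H_\Gamma(u)-\bar H(u)$ on $[0,T]\times\Gamma^*$. The second component collapses analogously to $v_{\d D}(\varrho)=a+b\kappa_{\d D}(\varrho)+\skp{n_\Gamma(u)}{n_D(u)}$ on $[0,T]\times\d\Gamma^*$, and the third component $\mathcal{B}_1(D)u+\mathcal{C}_1(D_\d)\varrho=0$ reads $u|_{\d\Gamma^*}=\varrho$. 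The initial conditions in $\Phi_0$ give $u(0)=\rho_0$ and $\varrho(0)=\rho_0|_{\d\Gamma^*}$. Setting $\rho:=u\in Z_u$ and using the coupling to identify $\rho|_{\d\Gamma^*}=\varrho\in Z_\varrho$, I recover precisely the nonlinear system stated in the theorem.

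For uniqueness I argue in reverse: any further solution $\tilde\rho\in Z_u$ with $\tilde\rho|_{\d\Gamma^*}\in Z_\varrho$ of the nonlinear MCF with initial datum $\rho_0$ defines $\tilde\Phi:=(\tilde\rho,\tilde\rho|_{\d\Gamma^*})\in\mathbb{E}$ with $\tilde\Phi(0)=\Phi_0$, and adding the linear parts back on both sides reproduces $L\tilde\Phi=(N(\tilde\Phi),\Phi_0)$, so $\tilde\Phi=\Phi$ by the uniqueness in Lemma \ref{lem:FixpointNonlinearMCF}. Since all analytic difficulties (maximal regularity, the non-local mean-curvature term, Lipschitz estimates of the nonlinearity, contraction via the $T^{1/q}$ factor) have been handled upstream, there is no real obstacle at this stage beyond verifying that the small ball of admissible $\Phi_0$ produced by Lemma \ref{lem:FixpointNonlinearMCF} indeed contains all $\Phi_0$ coming from $\rho_0$ with $\|\rho_0\|_{\pi Z_u}+\|\rho_0|_{\d\Gamma^*}\|_{\pi Z_\varrho}<\epsilon$, which is the content of the compatibility and norm identification just described.
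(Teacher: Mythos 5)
Your proposal is correct and is exactly the route the paper takes: the paper states Theorem \ref{thm:ShortTimeExistenceMCF} as an immediate transformation of Lemma \ref{lem:FixpointNonlinearMCF} back into the $\rho$-formulation, and your unpacking of the components of $L\Phi=(N(\Phi),\Phi_0)$ (cancellation of the linear parts, the coupling $u|_{\d\Gamma^*}=\varrho$ via $\mathcal{B}_1=1$, $\mathcal{C}_1=-1$, and the reverse direction for uniqueness) is precisely the intended, and correct, content of that step.
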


\begin{rem}\label{rem:Smallness}
Although it seems as if we would require smallness of the time interval and the initial hypersurface parametrized by $\rho_0$, this is actually not the case. We can start the evolution with rather general initial surfaces.
\end{rem}

This theorem completes the first part of this paper. In the next two sections we will examine the Willmore flow with the same strategy to prove the analogous result.

\section{The Willmore flow and its linearization}\label{sec:LinearWillmore}

In this section we want to consider an evolving hypersurface $\Gamma = (\Gamma(t))_{t \in I}$ in $\R^3$ whose motion is driven by the Willmore flow with line tension given by
\begin{align}\label{eq:WillmoreFlow1}
	V_\Gamma(\Psi(q,\rho(t,q))) &= -\Delta_\Gamma H_\Gamma(\Psi(q,\rho(t,q))) - \frac{1}{2} H_\Gamma(\Psi(q,\rho(t,q))) & & \notag \\
										 &\left(H_\Gamma(\Psi(q,\rho(t,q)))^2 - 4 K_\Gamma(\Psi(q,\rho(t,q)))\right) & &\text{in } \Gamma^*, \\ \label{eq:WillmoreFlow2}
	H_\Gamma(\Psi(q,\rho(t,q))) &= 0 & &\text{on } \d \Gamma^*, \\ \label{eq:WillmoreFlow3}
	v_{\d D}(\Psi(q,\rho(t,q))) &= \frac{1}{2} \sin(\alpha(q)) (\nabla_\Gamma H_\Gamma(\Psi(q,\rho(t,q))) \cdot n_{\d \Gamma}(\Psi(q,\rho(t,q)))) & & \notag \\
										 &+ a + b \kappa_{\d D}(\Psi(q,\rho(t,q))) & &\text{on } \d \Gamma^*,
\end{align}
and $\Psi$ is the curvilinear coordinate system as in Subsection \ref{ssec:MCF}.

Again this flow is motivated by the gradient flow of a certain energy namely
\begin{align}\label{eq:WillmoreEnergyLineTension}
	\WE(\Gamma) := \frac{1}{4} \int_\Gamma H^2 \dH^2 - a \int_D 1 \dH^2 + b \int_{\d \Gamma} 1 \dH^1.
\end{align}
Calculate the first variation of that energy, where we vary the geometry in the same way as in (\ref{eq:Variation})-(\ref{eq:FeasibleSet}) we get
\begin{align}\label{eq:VarWillmoreLineTemp}
	(\delta \WE(\Gamma))(\zeta) & = \frac{1}{2} \int_\Gamma \left(\Delta_\Gamma H_\Gamma + H_\Gamma \sum^2_{i=1} \kappa_i^2 - \frac{1}{2} H_\Gamma^3\right) (n_\Gamma \cdot \zeta) \dH^2 \notag \\ 
										 & + \frac{1}{2} \int_{\d \Gamma} \frac{1}{2} H_\Gamma^2 (n_{\d \Gamma} \cdot \zeta) + H_\Gamma (\nabla_\Gamma (n_\Gamma \cdot \zeta) \cdot n_{\d \Gamma}) - (\nabla_\Gamma H_\Gamma \cdot n_{\d \Gamma}) (n_\Gamma \cdot \zeta) \dH^1 \notag \\
										 & - \int_{\d \Gamma} a (n_{\d D} \cdot \zeta) + b (\vec{\kappa} \cdot \zeta) \dH^1,
\end{align}
With clever choices of the vector field $\zeta$ and the fundamental lemma of calculus of variations one can derive the three necessary conditions for stationary solutions to be
\begin{align}\label{eq:WillmoreNecCond1}
	\text{(a)} \quad &\Delta_\Gamma H_\Gamma + \frac{1}{2} H_\Gamma \left(H_\Gamma^2 - 4 K_\Gamma\right) = 0 & \quad &\text{in } \Gamma, \\ \label{eq:WillmoreNecCond2}
	\text{(b)} \quad &H_\Gamma = 0 & \quad &\text{on } \d \Gamma, \\ \label{eq:WillmoreNecCond3}
	\text{(c)} \quad &\frac{1}{2} \sin(\alpha) (\nabla_\Gamma H_\Gamma \cdot n_{\d \Gamma}) + a + b \kappa_{\d D} = 0 & \quad &\text{on } \d \Gamma,
\end{align}
where $K_\Gamma$ is the Gauss curvature of $\Gamma$.

We have already linearized some parts of this flow in Section \ref{ssec:LinearMCF} and will now only calculate the highest order derivatives of the remaining parts as we have seen in Section \ref{ssec:ShortTimeExLinearMCF} that only these are important for the short-time existence. Therefore, we will not compute the lower order terms like $K_\Gamma$, whose linearization obviously contains only first and second order derivatives of $\rho$.

For the next linearization of $\Delta_\Gamma H_\Gamma$ we need to indicate the dependence of the operator $\Delta_\Gamma$ on $\rho$. Following the notation of \cite{Dep10} we transform the surface gradient $\nabla_{\Gamma_\rho(t)}$ and the Laplace-Beltrami-Operator $\Delta_{\Gamma_\rho(t)}$ onto the reference surface $\Gamma^*$ using the pullback metric. The operators then read as
\begin{align}\label{eq:OperatorTransformation1}
	\Delta_{\Gamma_\rho(t)} H_{\Gamma_\rho(t)}(\Psi(q,\rho(t,q))) & = \Delta^\rho_{\Gamma^*} \tilde{H}_\rho(t,q), \\ \label{eq:OperatorTransformation2}
	\nabla_{\Gamma_\rho(t)} H_{\Gamma_\rho(t)}(\Psi(q,\rho(t,q))) & = d_q \Psi\left(\nabla^\rho_{\Gamma^*} \tilde{H}_\rho(t,q)\right),
\end{align}
where $\tilde{H}_\rho(t,q) := H_{\Gamma_\rho(t)}(\Psi(q,\rho(t,q)))$.

\begin{lemma}\label{lem:LinearLaplaceH}
The linearization of $\Delta_\Gamma H_\Gamma$ has the form
\begin{align*}
	\left.\frac{d}{d\epsilon} \Delta_\Gamma H_\Gamma(\Psi(q,\epsilon\rho(t,q)))\right|_{\epsilon=0} = \Delta_{\Gamma^*} \Delta_{\Gamma^*} \rho(t,q) + G_1\left(q,\rho(t,q),\nabla_{\Gamma^*} \rho(t,q),\nabla_{\Gamma^*}^2 \rho(t,q)\right),
\end{align*}
where $G_1$ is a smooth function.
\end{lemma}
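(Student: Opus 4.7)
The plan is to use the transformation identity (\ref{eq:OperatorTransformation1}), which rewrites the quantity as $\Delta^\rho_{\Gamma^*}\tilde{H}_\rho$, acting on the fixed reference surface. Writing $\tilde H_{\epsilon\rho}(t,q) = H_{\Gamma_{\epsilon\rho}(t)}(\Psi(q,\epsilon\rho(t,q)))$, the product rule yields
\begin{align*}
    \left.\frac{d}{d\epsilon}\Delta^{\epsilon\rho}_{\Gamma^*}\tilde{H}_{\epsilon\rho}(t,q)\right|_{\epsilon=0}
    = \left.\left(\frac{d}{d\epsilon}\Delta^{\epsilon\rho}_{\Gamma^*}\right)\right|_{\epsilon=0}\!\!H_{\Gamma^*}(q)
    + \Delta_{\Gamma^*}\!\left(\left.\frac{d}{d\epsilon}\tilde{H}_{\epsilon\rho}(t,q)\right|_{\epsilon=0}\right),
\end{align*}
since $\tilde H_0 = H_{\Gamma^*}$ and $\Delta^0_{\Gamma^*} = \Delta_{\Gamma^*}$. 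I will treat these two contributions separately.

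For the second term I would invoke Lemma \ref{lem:LinearHGamma}, which gives
\begin{align*}
    \left.\frac{d}{d\epsilon}\tilde{H}_{\epsilon\rho}(t,q)\right|_{\epsilon=0}
    = \Delta_{\Gamma^*}\rho(t,q) + |\sigma^*|^2(q)\rho(t,q) + \bigl(\nabla_{\Gamma^*}H_{\Gamma^*}(q)\cdot P(\d_w\Psi(q,0))\bigr)\rho(t,q).
\end{align*}
Applying $\Delta_{\Gamma^*}$ and expanding with the product rule on $\Gamma^*$ produces the highest order term $\Delta_{\Gamma^*}\Delta_{\Gamma^*}\rho(t,q)$ together with terms of the form $a(q)\Delta_{\Gamma^*}\rho + b(q)\cdot\nabla_{\Gamma^*}\rho + c(q)\rho$, where $a,b,c$ are smooth functions built from $|\sigma^*|^2$, $H_{\Gamma^*}$, $\d_w\Psi(\cdot,0)$ and their surface derivatives. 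These lower-order contributions depend smoothly on $q$, $\rho$, $\nabla_{\Gamma^*}\rho$, $\nabla_{\Gamma^*}^2\rho$, so they can be absorbed into $G_1$.

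The first term is where the actual work lies: I must show that the variation of the Laplace--Beltrami operator applied to the fixed smooth function $H_{\Gamma^*}$ contributes only derivatives of $\rho$ up to second order. To that end I would pass to local coordinates, writing
\[
    \Delta^\rho_{\Gamma^*}f = \tfrac{1}{\sqrt{g(\rho)}}\d_i\!\bigl(\sqrt{g(\rho)}\,g^{ij}(\rho)\,\d_j f\bigr),
\]
where $g(\rho) := \det(g_{ij}(\rho))$ and the pullback metric coefficients $g_{ij}(\rho)$ depend smoothly on $\rho$ and $\nabla_{\Gamma^*}\rho$ via the graph parametrization $q\mapsto\Psi(q,\rho(t,q))$. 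Differentiating in $\epsilon$ at $\epsilon=0$, the chain rule produces a linear differential operator in $\rho$ whose coefficients involve the metric, its $\rho$-derivatives, and a single spatial derivative of $g^{ij}$; these coefficients are therefore expressible in terms of $q$, $\rho$, $\nabla_{\Gamma^*}\rho$ and $\nabla_{\Gamma^*}^2\rho$. Because this operator is then evaluated on $H_{\Gamma^*}$, which is a fixed smooth function on $\Gamma^*$ not involving $\rho$, no new derivatives of $\rho$ are produced, and the whole contribution fits into $G_1$.

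The main technical obstacle is keeping careful track of the dependence of $g_{ij}(\rho)$ on $\rho$ and $\nabla_{\Gamma^*}\rho$ and making sure that no fourth-order $\rho$-derivative hidden in the operator variation survives; this is resolved precisely because $\tilde H_0 = H_{\Gamma^*}$ is $\rho$-independent, so only the derivatives of $H_{\Gamma^*}$ (not of $\rho$) appear after the operator acts. Combining both contributions gives
\begin{align*}
    \left.\frac{d}{d\epsilon}\Delta_\Gamma H_\Gamma(\Psi(q,\epsilon\rho(t,q)))\right|_{\epsilon=0}
    = \Delta_{\Gamma^*}\Delta_{\Gamma^*}\rho(t,q) + G_1\bigl(q,\rho(t,q),\nabla_{\Gamma^*}\rho(t,q),\nabla_{\Gamma^*}^2\rho(t,q)\bigr),
\end{align*}
with $G_1$ smooth, as claimed. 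A fully detailed computation would follow the coordinate-wise calculation in Lemma 3.5 of \cite{Dep10}, adapted to the present non-stationary setting.
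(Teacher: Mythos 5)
Your proposal is correct and follows essentially the same route as the paper: the same product-rule splitting into the variation of the operator applied to $\tilde H_0=H_{\Gamma^*}$ plus $\Delta_{\Gamma^*}$ applied to the variation from Lemma \ref{lem:LinearHGamma}. The only difference is that you spell out, via the local-coordinate form of $\Delta^\rho_{\Gamma^*}$ and the dependence of the pullback metric on $\rho$ and $\nabla_{\Gamma^*}\rho$, why the operator-variation term contains at most second-order derivatives of $\rho$ --- a point the paper asserts in one sentence --- and your justification of it is sound.
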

\begin{proof}
By the product rule we obtain
\begin{align*}
	\left.\frac{d}{d\epsilon} \Delta^{\epsilon \rho}_{\Gamma^*} \tilde{H}_{\epsilon \rho}(t,q)\right|_{\epsilon=0}
		& = \left.\frac{d}{d\epsilon} \Delta^{\epsilon \rho}_{\Gamma^*}\right|_{\epsilon=0} \tilde{H}_0(t,q) + \Delta^0_{\Gamma^*} \left.\frac{d}{d\epsilon} \tilde{H}_{\epsilon \rho}(t,q)\right|_{\epsilon=0}.
\end{align*}
For $\rho \equiv 0$ we obviously see $\Delta^0_{\Gamma^*} = \Delta_{\Gamma^*}$ and $\tilde{H}_0 = H_{\Gamma^*}$. In combination with Lemma \ref{lem:LinearHGamma} we have
\begin{align*}
	\left.\frac{d}{d\epsilon} \Delta^{\epsilon \rho}_{\Gamma^*} \tilde{H}_{\epsilon \rho}(t,q)\right|_{\epsilon=0}
		& = \left.\frac{d}{d\epsilon} \Delta^{\epsilon \rho}_{\Gamma^*}\right|_{\epsilon=0} H_{\Gamma^*}(q) + \Delta_{\Gamma^*} \Delta_{\Gamma^*} \rho(t,q) \\
		& + \Delta_{\Gamma^*} \left(|\sigma^*|^2(q) \rho(t,q) + \left(\nabla_{\Gamma^*} H_{\Gamma^*}(q) \cdot P\left(\d_w \Psi(q,0)\right)\right) \rho(t,q)\right).
\end{align*}
The linearization of the Laplace-Beltrami operator applied to $H_{\Gamma^*}$ contains at most second order derivatives of $\rho$ and the claim follows.
\end{proof}

\begin{lemma}\label{lem:LinearGradHConormal}
The linearization of $\nabla_\Gamma H_\Gamma \cdot n_{\d \Gamma}$ is of the form
\begin{align*}
	\left.\frac{d}{d\epsilon} \nabla_\Gamma H_\Gamma(\Psi(q,\epsilon\rho(t,q))) \cdot n_{\d \Gamma}(\Psi(q,\epsilon\rho(t,q)))\right|_{\epsilon=0}
		& = \nabla_{\Gamma^*} \Delta_{\Gamma^*} \rho(t,q) \cdot n_{\d \Gamma^*}(q) \\
		& + G_2\left(q,\rho(t,q),\nabla_{\Gamma^*} \rho(t,q),\nabla_{\Gamma^*}^2 \rho(t,q)\right),
\end{align*}
where $G_2$ is a smooth function.
\end{lemma}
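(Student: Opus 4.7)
The plan is to reduce to the linearization of the mean curvature itself (Lemma \ref{lem:LinearHGamma}) and to absorb the low-order contributions coming from $n_{\d \Gamma}$ and from the metric pullback into $G_2$. First I would apply the product rule at $\epsilon = 0$:
\begin{align*}
\left.\frac{d}{d\epsilon}\bigl(\nabla_\Gamma H_\Gamma \cdot n_{\d \Gamma}\bigr)\right|_{\epsilon=0}
= \left.\frac{d}{d\epsilon}\nabla_\Gamma H_\Gamma\right|_{\epsilon=0} \cdot n_{\d \Gamma^*}(q)
+ \nabla_{\Gamma^*} H_{\Gamma^*}(q) \cdot \left.\frac{d}{d\epsilon} n_{\d \Gamma}\right|_{\epsilon=0}.
\end{align*}
The second summand is harmless: since $n_{\d \Gamma}$ is built from the first derivatives of the parametrization of $\Gamma_{\epsilon\rho}(t)$, its $\epsilon$-derivative at $0$ is a smooth expression in $q$, $\rho$, and $\nabla_{\Gamma^*} \rho$; its inner product with the fixed smooth field $\nabla_{\Gamma^*} H_{\Gamma^*}$ therefore contributes to $G_2$.

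For the first summand I would use the pullback identity (\ref{eq:OperatorTransformation2}), namely
\begin{align*}
\nabla_\Gamma H_\Gamma(\Psi(q, \epsilon\rho)) = d_q \Psi(q, \epsilon\rho)\bigl(\nabla_{\Gamma^*}^{\epsilon\rho} \tilde{H}_{\epsilon\rho}(t, q)\bigr),
\end{align*}
and then differentiate once more with the product rule. The differential $d_q\Psi(q, w)$ and the pulled-back gradient $\nabla_{\Gamma^*}^{\rho}$ (which is built from the inverse of the first fundamental form of $\Gamma_\rho(t)$) depend smoothly on $q$ and on at most the first derivatives of $\rho$, so their $\epsilon$-derivatives, when applied to the fixed field $\nabla_{\Gamma^*} H_{\Gamma^*}$, produce contributions of at most first order in $\rho$ and go into $G_2$. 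Since $t(q, 0) = 0$ on $\Gamma^*$, the map $d_q\Psi(q, 0)$ acts as the identity on $T_q\Gamma^*$, so the remaining principal contribution is
\begin{align*}
\nabla_{\Gamma^*}\!\left(\left.\frac{d}{d\epsilon} \tilde{H}_{\epsilon\rho}\right|_{\epsilon=0}\right) \cdot n_{\d \Gamma^*}(q).
\end{align*}
By Lemma \ref{lem:LinearHGamma} the inner derivative equals $\Delta_{\Gamma^*}\rho + |\sigma^*|^2 \rho + \bigl(\nabla_{\Gamma^*} H_{\Gamma^*} \cdot P(\d_w \Psi(0))\bigr)\rho$, and applying $\nabla_{\Gamma^*}$ produces $\nabla_{\Gamma^*} \Delta_{\Gamma^*} \rho$ plus terms involving at most $\nabla_{\Gamma^*}^{2} \rho$, which again fit the $G_2$-form after the inner product with $n_{\d \Gamma^*}$.

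The main obstacle is the bookkeeping of orders: one has to verify that no third-order derivative of $\rho$ sneaks in via the $\epsilon$-derivatives of $d_q\Psi$ or of $\nabla_{\Gamma^*}^{\rho}$. The first fundamental form of $\Gamma_\rho(t)$ involves only the first derivatives of $q \mapsto \Psi(q, \rho(t,q))$, and its inverse is smooth in a neighborhood of $\rho \equiv 0$ thanks to the angle assumption (\ref{eq:AngleAssumption}) and the diffeomorphism property of $\Psi$; hence the $\epsilon$-derivatives of these auxiliary objects introduce at most first-order derivatives of $\rho$. Combined with the zeroth-order factor $\nabla_{\Gamma^*} H_{\Gamma^*}$, these contributions are safely absorbed into $G_2$, and the argument proceeds in the same spirit as the proof of Lemma \ref{lem:LinearLaplaceH}.
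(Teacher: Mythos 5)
Your proposal is correct and follows essentially the same route as the paper: product rule, absorbing the conormal variation as a lower-order term, commuting the surface gradient with the $\epsilon$-derivative up to terms of at most second order in $\rho$, and invoking Lemma \ref{lem:LinearHGamma} for the principal part. The extra bookkeeping you supply for the $\epsilon$-dependence of $d_q\Psi$ and the pullback metric is a more explicit version of what the paper compresses into ``LOT''.
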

\begin{proof}
First we decompose the desired expression using $\nabla_{\Gamma_0(t)} H_{\Gamma_0(t)} = \nabla_{\Gamma^*} H_{\Gamma^*}$ and $n_{\d \Gamma_0(t)} = n_{\d \Gamma^*}$ as follows
\begin{align*}
	\left.\frac{d}{d\epsilon} \nabla_{\Gamma_{\epsilon \rho}(t)} H_{\Gamma_{\epsilon \rho}(t)} \cdot n_{\d \Gamma_{\epsilon \rho}(t)}\right|_{\epsilon=0} = \left.\frac{d}{d\epsilon} \nabla_{\Gamma_{\epsilon \rho}(t)} H_{\Gamma_{\epsilon \rho}(t)}\right|_{\epsilon=0} \cdot n_{\d \Gamma^*} + \nabla_{\Gamma^*} H_{\Gamma^*} \cdot \left.\frac{d}{d\epsilon} n_{\d \Gamma_{\epsilon \rho}(t)}\right|_{\epsilon=0}.
\end{align*}
While the linearization of the conormal contains $\rho$ and its first derivatives, a closer look at the first term shows
\begin{align*}
	\left.\frac{d}{d\epsilon} \nabla_{\Gamma_{\epsilon \rho}(t)} H_{\Gamma_{\epsilon \rho}(t)}\right|_{\epsilon=0}
		= \nabla_{\Gamma^*} \left.\frac{d}{d\epsilon} \tilde{H}_{\epsilon \rho}\right|_{\epsilon=0} + LOT,
\end{align*}
where LOT stands for lower order terms that include at most second order derivatives of $\rho$. Using Lemma \ref{lem:LinearHGamma} again we see
\begin{align*}
	\nabla_{\Gamma^*} \left.\frac{d}{d\epsilon} \tilde{H}_{\epsilon \rho}\right|_{\epsilon=0}
		= \nabla_{\Gamma^*}\left(\Delta_{\Gamma^*} \rho + |\sigma^*|^2 \rho + \left(\nabla_{\Gamma^*} H_{\Gamma^*} \cdot P\left(\d_w \Psi(q,0)\right)\right) \rho\right).
\end{align*}
This proves the desired statement.
\end{proof}

Combining Lemmas \ref{lem:LinearLaplaceH} - \ref{lem:LinearGradHConormal} and the results of Section \ref{ssec:LinearMCF} we obtain the linearization of the flow (\ref{eq:WillmoreFlow1})-(\ref{eq:WillmoreFlow3}) as
\begin{align}\label{eq:LinearWillmoreFlow1}
	\d_t \rho(t) &= -\Delta_{\Gamma^*} \Delta_{\Gamma^*} \rho(t) + F_1(\rho(t),\nabla_{\Gamma^*} \rho(t),\nabla_{\Gamma^*}^2 \rho(t)) + f(t) & &\text{in } [0,T] \times \Gamma^*, \\ \label{eq:LinearWillmoreFlow2}
	\d_t \rho(t) &= \frac{1}{2} \sin(\alpha) (\nabla_{\Gamma^*} \Delta_{\Gamma^*} \rho(t) \cdot n_{\d \Gamma^*}) + b \sin(\alpha) \d_\sigma^2 \rho(t) & & \notag \\
					 &+ F_2(\rho(t),\nabla_{\Gamma^*} \rho(t),\nabla_{\Gamma^*}^2 \rho(t,q)) + g_0(t) & &\text{on } [0,T] \times \d \Gamma^*, \\ \label{eq:LinearWillmoreFlow3}
	0 				 &= \Delta_{\Gamma^*} \rho(t) + |\sigma^*|^2 \rho(t) + \left(\nabla_{\Gamma^*} H_{\Gamma^*} \cdot P\left(\d_w \Psi(0)\right)\right) \rho(t) + g_1(t) & &\text{on } [0,T] \times \d \Gamma^*, \\ \label{eq:LinearWillmoreFlow4}
	\rho(0)		 & = \rho_0 & &\text{in } \Gamma^*,
\end{align}
where $F_1$ and $F_2$ are smooth functions and we have suppressed the argument $q$ again.

\section{Local existence of solutions of the Willmore flow with line tension}\label{sec:LocalExistenceWillmore}

\subsection{Short-time existence of solutions for the linearized Willmore flow}\label{ssec:ShortTimeExLinearWF}

Now we will do the same considerations as in Section \ref{sec:LocalExistenceMCF} to show that the flow (\ref{eq:WillmoreFlow1})-(\ref{eq:WillmoreFlow3}) has a unique strong solution for short times. Again we consider first the linearized Willmore flow from (\ref{eq:LinearWillmoreFlow1})-(\ref{eq:LinearWillmoreFlow4}), which we will solve by using \cite{DPZ08} again.

First we adopt again all the notation from \cite{DPZ08}. Skipping the argument $q$ the operators and functions in our case read as
\begin{align*}
	\mathcal{A}(D) &:= \Delta_{\Gamma^*} \Delta_{\Gamma^*} + LOT, & & \\
	\mathcal{B}_0(D) &:= -\frac{1}{2} \sin(\alpha) (n_{\d \Gamma^*} \cdot \nabla_{\Gamma^*} \Delta_{\Gamma^*}) + LOT, & \mathcal{C}_0(D_\d) &:= -b \sin(\alpha) \d_\sigma^2 + LOT, \\
	\mathcal{B}_1(D) &:= \Delta_{\Gamma^*} + |\sigma^*|^2 + \left(\nabla_{\Gamma^*} H_{\Gamma^*} \cdot P\left(\d_w \Psi(0)\right)\right), & \mathcal{C}_1(D_\d) &:= 0, \\
	\mathcal{B}_2(D) &:= 1, & \mathcal{C}_2(D_\d) &:= -1, \\
	u(t) &:= \rho(t), & \varrho(t) &:= \rho(t)|_{\d \Gamma^*},
\end{align*}
where $LOT$ stands for some unspecified lower order terms, which will be unimportant for the maximal regularity result below.

We note again that the required condition ``all $\mathcal{B}_j$ and at least one $\mathcal{C}_j$ are non-trivial'' is satisfied. We still have $E := F := \R$ of type $\mathcal{HT}$ and the interval we want to consider is $[0,T]$ denoted by $J$. Because of
\begin{align*}
	l := \max_{i \in \{0,1,2\}} \{\ord(\mathcal{C}_i) - \ord(\mathcal{B}_i) + \ord(\mathcal{B}_0)\} = 3 \qquad \text{ and } \qquad 2m := \ord(\mathcal{A}) = 4
\end{align*}
we have to consider the setting that is called ``case 2'' in \cite{DPZ08}. In our situation the spaces simplify to
\begin{align}\label{eq:SpacesWillmore}
	X					 & := L_p(J;L_p(\Gamma^*;\R)), \notag \\
	Z_u				 & := W^1_p(J; L_p(\Gamma^*;\R)) \cap L_p(J; W^4_p(\Gamma^*;\R)), \notag \\
	\pi Z_u			 & := W^{4-\frac{4}{p}}_p(\Gamma^*;\R), \notag \\
	Y_0				 & := W^{\frac{1}{4}-\frac{1}{4p}}_p(J; L_p(\d \Gamma^*;\R))
								\cap L_p(J; W^{1-\frac{1}{p}}_p(\d \Gamma^*;\R)), \notag \\
	Y_1				 & := W^{\frac{1}{2}-\frac{1}{4p}}_p(J; L_p(\d \Gamma^*;\R))
								\cap L_p(J; W^{2-\frac{1}{p}}_p(\d \Gamma^*;\R)), \notag \\
	Y_2				 & := W^{1-\frac{1}{4p}}_p(J; L_p(\d \Gamma^*;\R))
								\cap L_p(J; W^{4-\frac{1}{p}}_p(\d \Gamma^*;\R)), \notag \\
	Z_\varrho		 & := W^{\frac{5}{4}-\frac{1}{4p}}_p(J; L_p(\d \Gamma^*;\R))
								\cap W^1_p(J; W^{1-\frac{1}{p}}_p(\d \Gamma^*;\R))
								\cap L_p(J; W^{4-\frac{1}{p}}_p(\d \Gamma^*;\R)), \notag \\
	\pi Z_\varrho	 & := W^{4-\frac{4}{p}}_p(\d \Gamma^*;\R), \notag \\
	\pi_1 Z_\varrho & := W^{1-\frac{5}{p}}_p(\d \Gamma^*;\R),
\end{align}
where we have to assure $\frac{4}{p} \notin \N$ and $\kappa_0 := 1 - \frac{\ord(\mathcal{B}_0)}{2m} + \frac{1}{2mp} = \frac{1}{2} - \frac{1}{2p} > \frac{1}{p}$ for the trace spaces. Therefore we assume $p > 5$. As the principle parts of the operators we obtain
\begin{align*}
	\mathcal{A}^\sharp(-i\nabla_{\Gamma^*}) & = \Delta_{\Gamma^*} \Delta_{\Gamma^*} = \left((-i\nabla_{\Gamma^*}) \cdot (-i\nabla_{\Gamma^*})\right)^2, & & \\
	\mathcal{B}_0^\sharp(-i\nabla_{\Gamma^*}) & = \frac{1}{2} i \sin(\alpha(q)) (n_{\d \Gamma^*}(q) \cdot (-i\nabla_{\Gamma^*}) \left((-i\nabla_{\Gamma^*}) \cdot (-i\nabla_{\Gamma^*})\right)), & & \\
	\mathcal{C}_0^\sharp(-i\d_\sigma) & = b \sin(\alpha(q)) (-i\d_\sigma)^2, & & \\
	\mathcal{B}_1^\sharp(-i\nabla_{\Gamma^*}) & = -\left((-i\nabla_{\Gamma^*}) \cdot (-i\nabla_{\Gamma^*})\right), & \mathcal{B}_2^\sharp(-i\nabla_{\Gamma^*}) & = 1, \\
	\mathcal{C}_1^\sharp(-i\d_\sigma) & = 0, & \mathcal{C}_2^\sharp(-i\d_\sigma) & = -1.
\end{align*}
Again we have to check some assumptions to apply the theorems of \cite{DPZ08}. Due to the case $l < 2m$ this time we can only ignore the assumptions (LS$^+_\infty$), (SD), (SB) and (SC), but have to check the assumptions (E), (LS) and henceforth (LS$^-_\infty$).

For assumption (E) we let $t \in J$, $q \in \Gamma^*$ and $\xi \in \R^2$ with $\norm{\xi} = 1$. Then we see
\begin{align*}
	\sigma(\mathcal{A}^\sharp(\xi)) & = \left\{\lambda \in \C \left| \ \lambda - \mathcal{A}^\sharp(\xi) = \lambda - \norm{\xi}^4 = 0\right.\right\} = \{1\} \subseteq \C_+.
\end{align*}

For checking the condition (LS) we prove that the ODE given by
\begin{align*}
	&\text{(I)} & \lambda v(y) + \hat{\xi}^4 v(y) - 2 \hat{\xi}^2 v''(y) + v''''(y) &= 0,\quad y>0, \\
	&\text{(II)} & -\frac{1}{2} \sin(\alpha(q)) \left(\hat{\xi}^2 v'(0) - v'''(0)\right) + \lambda \sigma + b \sin(\alpha(q)) \hat{\xi}^2 \sigma &= 0, \\
	&\text{(III)} & -\hat{\xi}^2 v(0) + v''(0) &= 0, \\
	&\text{(IV)} & v(0) - \sigma &= 0,
\end{align*}
has only the trivial solution in $C_0(\R_+;\R) \times \R$ for $\hat{\xi} \in \R$ and $\lambda \in \bar{\C_+}$ with $|\hat{\xi}| + |\lambda| \neq 0$. Now we will look at these equations step by step.

Equation (I) shows
\begin{align*}
	v(y) = \begin{cases}
				 c_1 e^{\mu_1 y} + c_2 e^{-\mu_1 y} + c_3 e^{\mu_2 y} + c_4 e^{-\mu_2 y} & \text{if} \quad \lambda \neq 0, \\
				 c_1 e^{|\hat{\xi}| y} + c_2 e^{-|\hat{\xi}| y} + c_3 y e^{|\hat{\xi}| y} + c_4 y e^{-|\hat{\xi}| y} & \text{if} \quad \lambda = 0,
			 \end{cases}
\end{align*}
where we can again choose w.l.o.g. the roots to satisfy $\Re(\mu_i) > 0$ for $i \in \{1,2\}$ since $\mu_i$ and $-\mu_i$ both appear in $v$. $\Re(\mu_i) = 0$ is again not possible. Furthermore we require $v \in C_0(\R_+;\R)$, which leads to $c_1 = c_3 = 0$. Now (IV) shows
\begin{align*}
	\sigma = v(0) = \begin{cases}
							 c_2 + c_4  & \text{if} \quad \lambda \neq 0, \\
							 c_2 & \text{if} \quad \lambda = 0,
						 \end{cases}
\end{align*}
and (III) transforms into
\begin{align*}
	\begin{cases}
		c_2 \mu_1^2 + (\sigma - c_2) \mu_2^2 = c_2 \hat{\xi}^2 + (\sigma - c_2) \hat{\xi}^2 & \text{if} \quad \lambda \neq 0, \\
		\sigma \hat{\xi}^2 - 2 |\hat{\xi}| c_4 = \sigma \hat{\xi}^2 & \text{if} \quad \lambda = 0.
	\end{cases}
\end{align*}
Using $\mu_1^2 = \hat{\xi}^2 + \sqrt{-\lambda}$ and $\mu_2^2 = \hat{\xi}^2 - \sqrt{-\lambda}$ we see that these conditions transform into $c_2 = \frac{\sigma}{2}$ if $\lambda \neq 0$ and $c_4 = 0$ if $\lambda = 0$. Therefore we get
\begin{align*}
	v(y) = \begin{cases}
				 \frac{\sigma}{2} \left(e^{-\mu_1 y} + e^{-\mu_2 y}\right) & \text{if} \quad \lambda \neq 0 \\
				 \sigma e^{-|\hat{\xi}| y} & \text{if} \quad \lambda = 0
			 \end{cases}.
\end{align*}
Equation (II) is the only remaining and can be written as
\begin{align*}
	\begin{cases}
		\frac{\sigma}{4} \sin(\alpha) \sqrt{-\lambda} (\mu_1 - \mu_2) = \lambda \sigma + b \sin(\alpha(q)) \hat{\xi}^2 \sigma & \text{if} \quad \lambda \neq 0, \\
		b \sin(\alpha) \hat{\xi}^2 \sigma = 0 & \text{if} \quad \lambda = 0,
	\end{cases},
\end{align*}
because $\hat{\xi}^2 v'(0) - v'''(0) = \frac{\sigma}{2} \sqrt{-\lambda} (\mu_1 - \mu_2)$ as one can easily calculate. In the case $\lambda = 0$ we have $b, \sin(\alpha), \hat{\xi}^2 \in \R_+$ which proves $\sigma = 0$ and hence $v \equiv 0$, which shows (LS) in this case. In the case $\lambda \neq 0$ we either have $\sigma = 0$ which would mean that (LS) is satisfied or
\begin{align}\label{eq:2ndOptionWillmore}
	\underbrace{\frac{1}{4} \sin(\alpha) \sqrt{-\lambda} (\mu_1 - \mu_2)}_{=: L} = \underbrace{\lambda + b \sin(\alpha(q)) \hat{\xi}^2}_{=: R}.
\end{align}
To prove condition (LS) completely we only have to show that (\ref{eq:2ndOptionWillmore}) is not possible. First we remark that we can assume w.l.o.g. $\Im(\sqrt{-\lambda}) > 0$ since the other choice would change the roles of $\mu_1$ and $\mu_2$, but also causes $\sqrt{-\lambda}$ to bring an additional ``$-$'' into the left hand side. Therefore both sides are independent of the choice of $\sqrt{-\lambda}$. Now we have to distinguish two cases.

\textbf{Case 1: $\Im(\lambda) \geq 0$.} Due to $\Im(\sqrt{-\lambda}) > 0$ we see that $\arg(\sqrt{-\lambda}) \in \left[\frac{\pi}{2},\frac{3\pi}{4}\right]$. Denoting $c := \Re(\sqrt{-\lambda}) \leq 0$ and $d := \Im(\sqrt{-\lambda}) > 0$ we can write the square roots $\mu_1$ and $\mu_2$ as follows
\begin{align*}
	\mu_1 & = \sqrt{\frac{\sqrt{(\hat{\xi}^2 + c)^2 + d^2} + (\hat{\xi}^2 + c)}{2}} + i \underbrace{\sgn(\Im(\hat{\xi}^2 + \sqrt{-\lambda}))}_{= 1} \sqrt{\frac{\sqrt{(\hat{\xi}^2 + c)^2 + d^2} - (\hat{\xi}^2 + c)}{2}}, \\
	\mu_2 & = \sqrt{\frac{\sqrt{(\hat{\xi}^2 - c)^2 + d^2} + (\hat{\xi}^2 - c)}{2}} + i \underbrace{\sgn(\Im(\hat{\xi}^2 - \sqrt{-\lambda}))}_{= -1} \sqrt{\frac{\sqrt{(\hat{\xi}^2 - c)^2 + d^2} - (\hat{\xi}^2 - c)}{2}}.
\end{align*}
Due to $c \leq 0$ we see that $\Re(\mu_1) \leq \Re(\mu_2)$, which gives us $\Re(\mu_1 - \mu_2) \leq 0$ and the imaginary parts obviously satisfy $\Im(\mu_1) > 0$ and  $\Im(\mu_2) < 0$, leading to $\Im(\mu_1 - \mu_2) > 0$. This proves $\arg(\mu_1 - \mu_2) \in \left[\frac{\pi}{2},\pi\right)$. Hence we get
\begin{align*}
	\arg(\sqrt{-\lambda} (\mu_1 - \mu_2)) = \arg(\sqrt{-\lambda}) + \arg(\mu_1 - \mu_2) \in \left[\frac{\pi}{2},\frac{3\pi}{4}\right] + \left[\frac{\pi}{2},\pi\right) = \left[\pi,\frac{7\pi}{4}\right).
\end{align*}
Therefore the whole left-hand side $L$ satisfies $\arg(L) \in \left[\pi,\frac{7\pi}{4}\right)$. The right-hand $R$ side obviously fulfills $\arg(R) \in \left[0, \frac{\pi}{2}\right]$ instead.

\textbf{Case 2: $\Im(\lambda) < 0$.} By the same arguments we get $\arg(\sqrt{-\lambda}) \in \left[\frac{\pi}{4},\frac{\pi}{2}\right]$, $c := \Re(\sqrt{-\lambda}) \geq 0$ and $d := \Im(\sqrt{-\lambda}) > 0$. Due to $c \geq 0$ we get this time $\Re(\mu_1 - \mu_2) \geq 0$ and $\Im(\mu_1 - \mu_2) > 0$. This proves $\arg(\mu_1 - \mu_2) \in \left(0,\frac{\pi}{2}\right]$. Hence we now obtain
\begin{align*}
	\arg(\sqrt{-\lambda} (\mu_1 - \mu_2)) = \arg(\sqrt{-\lambda}) + \arg(\mu_1 - \mu_2) \in \left[\frac{\pi}{4},\frac{\pi}{2}\right] + \left(0,\frac{\pi}{2}\right] = \left(\frac{\pi}{4},\pi\right].
\end{align*}
Therefore we see $\arg(L) \in \left(\frac{\pi}{4},\pi\right]$. The right-hand side obviously fulfills $\arg(R) \in \left[\frac{3\pi}{2},2\pi\right]$ instead.

In both cases we get $\arg(L) \neq \arg(R)$ which proves that (\ref{eq:2ndOptionWillmore}) is not true in either case. This completes the proof of (LS).

Next we prove the validity of assumption (LS$^-_\infty$). First we have to show that for $\hat{\xi} \in \R$, $\lambda \in \bar{\C_+}$ with $|\hat{\xi}| + |\lambda| \neq 0$ the ODE
\begin{align*}
	&\text{(I)} & \lambda v(y) + \hat{\xi}^4 v(y) - 2 \hat{\xi}^2 v''(y) + v''''(y) &= 0,\quad y>0, \\
	&\text{(II)} & -\hat{\xi}^2 v(0) + v''(0) &= 0, \\
	&\text{(III)} & v(0) &= 0
\end{align*}
only has the trivial solution in $C_0(\R_+;\R)$. The same arguments as before show that the function $v \in C_0(\R_+;\R)$ has the form
\begin{align*}
	v(y) = \begin{cases}
				 c_2 e^{-\mu_1 y} + c_4 e^{-\mu_2 y} & \text{if} \quad \lambda \neq 0, \\
				 c_2 e^{-|\hat{\xi}| y} + c_4 y e^{-|\hat{\xi}| y} & \text{if} \quad \lambda = 0,
			 \end{cases}
\end{align*}
where we again choose w.l.o.g. $\Re(\mu_i) > 0$. Equation (III) shows that
\begin{align*}
	0 = v(0) = \begin{cases}
					  c_2 + c_4 & \text{if} \quad \lambda \neq 0, \\
					  c_2 & \text{if} \quad \lambda = 0,
				  \end{cases}
\end{align*}
and equation (II) transforms into
\begin{align*}
	\begin{cases}
		c_2 \mu_1^2 - c_2 \mu_2^2 = c_2 \hat{\xi}^2 - c_2 \hat{\xi}^2 = 0 & \text{if} \quad \lambda \neq 0, \\
		-2 |\hat{\xi}| c_4 = 0 & \text{if} \quad \lambda = 0.
	\end{cases}
\end{align*}
Using $\mu_1^2 \neq \mu_2^2$ this gives $c_2 = 0$ if $\lambda \neq 0$ and $c_4 = 0$ if $\lambda = 0$. Hence we have shown $v \equiv 0$.

The second statement to prove in (LS$^-_\infty$) is that for $\lambda \in \bar{\C_+}$
\begin{align*}
	&\text{(I)} & v(y) - 2 v''(y) + v''''(y) &= 0,\quad y>0, \\
	&\text{(II)} & -\frac{1}{2} \sin(\alpha(q)) \left(v'(0) - v'''(0)\right) + \lambda \sigma + b \sin(\alpha(q)) \sigma &= 0, \\
	&\text{(III)} & -v(0) + v''(0) &= 0, \\
	&\text{(IV)} & v(0) - \sigma &= 0
\end{align*}
only has the trivial solution in $C_0(\R_+;\R) \times \R$. Equation (I) and $v \in C_0(\R_+;\R)$ show that
\begin{align*}
	v(y) = c_2 e^{-y} + c_4 y e^{-y}.
\end{align*}
Equation (IV) gives $\sigma = v(0) = c_2$ and equation (III) reads as $\sigma - 2 c_4 = \sigma$, hence $c_4 = 0$. We end up with $v(y) = \sigma e^{-y}$. Computing $v'(0) - v'''(0) = 0$ we see that (II) simplifies to $(\lambda + b \sin(\alpha(q))) \sigma = 0$. Since $\lambda \neq -b \sin(\alpha(q)) \in \R_-$, we get $\sigma = 0$ leading to $v \equiv 0$. Finally this completes (LS$^-_\infty$).

Now we have proven all assumptions from \cite{DPZ08} and can state the following theorem.

\begin{thm}\label{thm:LocalExistenceWillmoreGeneral}
Let $5 < p < \infty$, $J := [0,T]$ and the spaces be defined as in (\ref{eq:SpacesWillmore}). Then the problem
\begin{align}\label{eq:LocalLinearWillmore1}
	\frac{d}{dt} u(t) + \mathcal{A}(D) u(t) &= f(t) & &\text{ in } J \times \Gamma^*, \\ \label{eq:LocalLinearWillmore2}
	\frac{d}{dt} \varrho(t) + \mathcal{B}_0(D) u(t) + \mathcal{C}_0(D_\d) \varrho(t) &= g_0(t) & &\text{ on } J \times \d \Gamma^*, \\ \label{eq:LocalLinearWillmore3}
	\mathcal{B}_1(D) u(t) + \mathcal{C}_1(D_\d) \varrho(t) &= g_1(t) & &\text{ on } J \times \d \Gamma^*, \\ \label{eq:LocalLinearWillmore4}
	\mathcal{B}_2(D) u(t) + \mathcal{C}_2(D_\d) \varrho(t) &= g_2(t) & &\text{ on } J \times \d \Gamma^*, \\ \label{eq:LocalLinearWillmore5}
	u(0) &= u_0 & &\text{ in } \Gamma^*, \\ \label{eq:LocalLinearWillmore6}
	\varrho(0) &= \varrho_0 & &\text{ on } \d \Gamma^*
\end{align}
has a unique solution $(u,\varrho) \in Z_u \times Z_\varrho$ if and only if
\begin{align*}
	f \in X, \qquad u_0 \in \pi Z_u, \qquad \varrho_0 \in \pi Z_\varrho, \qquad g_0 \in Y_0, \\
	g_1 \in Y_1, \qquad g_2 \in Y_2, \qquad g_0(0) - \mathcal{B}_0(D) u_0 - \mathcal{C}_0(D_\d) \varrho_0 \in \pi_1 Z_\varrho, \\
	\mathcal{B}_1(D) u_0 + \mathcal{C}_1(D_\d) \varrho_0 = g_1(0), \qquad \mathcal{B}_2(D) u_0 + \mathcal{C}_2(D_\d) \varrho_0 = g_2(0).
\end{align*}
\end{thm}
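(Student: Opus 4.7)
The strategy is to invoke Theorem~2.1 of \cite{DPZ08} in its ``case 2'' formulation ($l < 2m$) and to translate its conclusion into the notation of the present setting. The translation of (\ref{eq:LocalLinearWillmore1})--(\ref{eq:LocalLinearWillmore6}) into the abstract framework of \cite{DPZ08} has been carried out in the discussion preceding the theorem via the explicit assignments of $\mathcal{A}(D)$, $\mathcal{B}_j(D)$ and $\mathcal{C}_j(D_\d)$. The orders yield $l = 3$ and $2m = 4$, so case 2 applies and the function spaces in (\ref{eq:SpacesWillmore}) coincide with those demanded by the general theorem. As in the analogue proof of Theorem~\ref{thm:LocalExistenceGeneral} for the MCF one argues on smooth manifolds by a partition of unity and local charts, which we suppress.

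It remains to observe that all structural hypotheses of \cite{DPZ08} are in force. The non-triviality of at least one $\mathcal{C}_j$ is guaranteed by $\mathcal{C}_0 \neq 0$ and $\mathcal{C}_2 \neq 0$; the Banach spaces $E = F = \R$ are trivially of type $\mathcal{HT}$; the smoothness assumptions (SD), (SB) and (SC) are satisfied because $\Gamma^*$, $\d\Gamma^*$ and the coefficient functions are smooth; and (LS$^+_\infty$) is not required in case 2. Ellipticity (E) was checked above via $\sigma(\mathcal{A}^\sharp(\xi)) = \{\|\xi\|^4\} \subseteq \C_+$. The Lopatinskii-Shapiro condition (LS) and its asymptotic counterpart (LS$^-_\infty$) were verified by the explicit ODE analyses, the former resting on the argument-separation shown for the two sides of (\ref{eq:2ndOptionWillmore}).

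Granted these verifications, Theorem~2.1 of \cite{DPZ08} yields directly that (\ref{eq:LocalLinearWillmore1})--(\ref{eq:LocalLinearWillmore6}) is uniquely solvable in $Z_u \times Z_\varrho$ iff the data $(f,g_0,g_1,g_2,u_0,\varrho_0)$ lie in the prescribed regularity classes and satisfy the compatibility conditions at $t=0$ listed in the statement. The main obstacle would have been the verification of (LS), which required the delicate case distinction on $\Im(\lambda)$ showing $\arg(L) \neq \arg(R)$ in (\ref{eq:2ndOptionWillmore}); with this already completed, the proof reduces essentially to a citation of \cite{DPZ08}.
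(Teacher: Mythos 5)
Your proposal is correct and follows exactly the paper's route: the proof is an application of Theorem 2.1 of \cite{DPZ08} in the case $l=3<2m=4$, with the hypotheses (E), (LS) and (LS$^-_\infty$) verified in the discussion preceding the theorem and the remaining assumptions either trivially satisfied or not required in case 2. No gaps.
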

\begin{proof}
Follows from Theorem 2.1 in \cite{DPZ08} adapted to this specific case.
\end{proof}

\begin{cor}\label{cor:LocalExistenceWillmore}
Let $5 < p < \infty$, $J := [0,T]$ and the spaces be defined as in (\ref{eq:SpacesWillmore}). Then (\ref{eq:LinearWillmoreFlow1})-(\ref{eq:LinearWillmoreFlow4}) has a unique solution $\rho \in Z_u$ with $\rho|_{\d \Gamma^*} \in Z_\varrho$ if and only if $f \in X$, $g_0 \in Y_0$, $g_1 \in Y_1$, $\rho_0 \in \pi Z_u$ and $\rho_0|_{\d \Gamma^*} \in \pi Z_\varrho$ and $\Delta_{\Gamma^*} \rho_0 + |\sigma^*|^2 \rho_0 + \left(\nabla_{\Gamma^*} H_{\Gamma^*} \cdot P\left(\d_w \Psi(0)\right)\right) \rho_0 = 0$.
\end{cor}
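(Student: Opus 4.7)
The plan is to deduce the corollary directly from Theorem \ref{thm:LocalExistenceWillmoreGeneral} by specializing the abstract data to the concrete setting at hand. Setting $u(t) := \rho(t)$ and $\varrho(t) := \rho(t)|_{\d \Gamma^*}$, with the operators $\mathcal{A}, \mathcal{B}_0, \mathcal{B}_1, \mathcal{B}_2, \mathcal{C}_0, \mathcal{C}_1, \mathcal{C}_2$ identified as in the paragraph preceding the theorem, the linearized flow (\ref{eq:LinearWillmoreFlow1})--(\ref{eq:LinearWillmoreFlow4}) is exactly of the form (\ref{eq:LocalLinearWillmore1})--(\ref{eq:LocalLinearWillmore6}) provided we set $g_2 \equiv 0$: the third boundary relation $\mathcal{B}_2(D) u + \mathcal{C}_2(D_\d)\varrho = u|_{\d \Gamma^*} - \varrho$ then encodes the implicit trace coupling between $u$ and $\varrho$, and its initial compatibility $u_0|_{\d \Gamma^*} = \varrho_0$ is tautological since by definition $\varrho_0 := \rho_0|_{\d \Gamma^*}$.

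Next I would verify that the remaining first-order compatibility requirements of Theorem \ref{thm:LocalExistenceWillmoreGeneral} either appear in the hypotheses or are trivially satisfied. Since $\mathcal{C}_1 \equiv 0$, the condition $\mathcal{B}_1(D) u_0 + \mathcal{C}_1(D_\d)\varrho_0 = g_1(0)$ reduces to
\[
	\Delta_{\Gamma^*} \rho_0 + |\sigma^*|^2 \rho_0 + \left(\nabla_{\Gamma^*} H_{\Gamma^*} \cdot P\left(\d_w \Psi(0)\right)\right) \rho_0 = 0,
\]
which is precisely the geometric compatibility condition listed in the statement, while the analogous condition for $g_2$ is empty.

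The main obstacle, and the step that actually requires some care, is to check that the third compatibility $g_0(0) - \mathcal{B}_0(D) u_0 - \mathcal{C}_0(D_\d)\varrho_0 \in \pi_1 Z_\varrho$ is automatic, so that it need not be imposed as an additional hypothesis. I would follow the template of the proof of Corollary \ref{cor:LocalExistence1} and argue term by term. Since $\mathcal{B}_0$ is of order three and $\rho_0 \in W^{4-\frac{4}{p}}_p(\Gamma^*;\R)$, the bulk term $\mathcal{B}_0(D)\rho_0$ belongs to $W^{1-\frac{4}{p}}_p(\Gamma^*;\R)$, whose boundary trace sits in $W^{1-\frac{5}{p}}_p(\d \Gamma^*;\R) = \pi_1 Z_\varrho$ thanks to $p > 5$. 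Since $\mathcal{C}_0$ is of order two and $\varrho_0 \in W^{4-\frac{4}{p}}_p(\d \Gamma^*;\R)$, the boundary term $\mathcal{C}_0(D_\d)\varrho_0$ lies in $W^{2-\frac{4}{p}}_p(\d \Gamma^*;\R) \hookrightarrow \pi_1 Z_\varrho$. Finally, $g_0(0) \in \pi_1 Z_\varrho$ via the temporal trace $\gamma_0 \colon Y_0 \to \pi_1 Z_\varrho$ provided by (A.25) of \cite{Gru95}, the relevant trace threshold $\tfrac{1}{4}-\tfrac{1}{4p} > \tfrac{1}{p}$ coinciding precisely with the standing assumption $p > 5$. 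Once these Sobolev--Slobodeckij regularity checks are in place, an application of Theorem \ref{thm:LocalExistenceWillmoreGeneral} delivers the unique solution $\rho \in Z_u$ with $\rho|_{\d \Gamma^*} \in Z_\varrho$, completing the argument.
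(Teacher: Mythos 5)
Your proposal is correct and follows essentially the same route as the paper's own proof: specialize Theorem \ref{thm:LocalExistenceWillmoreGeneral} with $g_2 \equiv 0$, observe that the $g_2$-compatibility is tautological from $u_0|_{\d \Gamma^*} = \varrho_0$, that the $g_1$-compatibility reduces to the stated geometric condition since $\mathcal{C}_1 \equiv 0$, and that the $\pi_1 Z_\varrho$-condition is automatic by the order-counting and trace arguments already used for Corollary \ref{cor:LocalExistence1}. The only (harmless) addition is your explicit verification that the temporal-trace threshold $\tfrac14-\tfrac1{4p}>\tfrac1p$ is exactly $p>5$, which the paper delegates to the citation of (A.25) in \cite{Gru95}.
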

\begin{proof}
Follows from Theorem \ref{thm:LocalExistenceWillmoreGeneral} if we choose $g_2 \equiv 0$. The condition $\mathcal{B}_2(0) u_0 + \mathcal{C}_2(0) \varrho_0 = g_2(0)$ is valid since $u_0|_{\d \Gamma^*} = \rho_0|_{\d \Gamma^*} = \varrho_0$. Finally,
\begin{align*}
	g_0(0) - \mathcal{B}_0(-i\nabla_{\Gamma^*})\rho_0 - \mathcal{C}_0(-i \d_\sigma) \rho_0|_{\d \Gamma^*} \in \pi_1 Z_\varrho = W^{1-\frac{5}{p}}_p(\d \Gamma^*;\R)
\end{align*}
can be ignored, because $\rho_0|_{\d \Gamma^*} \in \pi Z_\varrho = W^{4-\frac{4}{p}}_p(\d \Gamma^*;\R)$, $\mathcal{C}_0$ is of second order as well as $\rho_0 \in \pi Z_u = W^{4-\frac{4}{p}}_p(\Gamma^*;\R)$, $\mathcal{B}_0$ is of third order and the trace operator maps from $W^{1-\frac{4}{p}}_p(\Gamma^*;\R)$ to $W^{1-\frac{5}{p}}_p(\d \Gamma^*;\R)$. In addition, $g_0(0) \in \pi_1 Z_\varrho$, since the trace operator $\gamma_0$ maps from $Y_0$ to $\pi_1 Z_\varrho$ as one can see from (A.25) in \cite{Gru95}. Finally, $\mathcal{B}_1(D) u_0 + \mathcal{C}_1(D_\d) \varrho_0 = g_1(0)$ simplifies to
\begin{align*}
	\Delta_{\Gamma^*} \rho_0 + |\sigma^*|^2 \rho_0 + \left(\nabla_{\Gamma^*} H_{\Gamma^*} \cdot P\left(\d_w \Psi(0)\right)\right) \rho_0 = 0.
\end{align*}
\end{proof}

\subsection{Short-time existence of solutions for the Willmore flow}\label{ssec:ShortTimeExWF}

Now we want to prove short-time existence of solutions of the non-linear flow
\begin{align}\label{eq:NonLinearWF1}
	V_\Gamma(u(t)) &= -\Delta_\Gamma H_\Gamma(u(t)) - \frac{1}{2} H_\Gamma(u(t)) \left(H_\Gamma(u(t))^2 - 4 K_\Gamma(u(t))\right) & &\text{in } J \times \Gamma^*, \\ \label{eq:NonLinearWF2}
	v_{\d D}(\varrho(t)) &= \frac{1}{2} \sin(\alpha) (\nabla_\Gamma H_\Gamma(u(t)) \cdot n_{\d \Gamma}(u(t))) + a + b \kappa_{\d D}(\varrho(t)) & &\text{on } J \times \d \Gamma^*, \\ \label{eq:NonLinearWF3}
	0 &= H_\Gamma(u(t)) & &\text{on } J \times \d \Gamma^*, \\ \label{eq:NonLinearWF4}
	0 &= u(t) - \varrho(t) & &\text{on } J \times \d \Gamma^*, \\ \label{eq:NonLinearWF5}
	u(0) &= u_0 & &\text{in } \Gamma^*, \\ \label{eq:NonLinearWF6}
	\varrho(0) &= \varrho_0 & &\text{on } \d \Gamma^*,
\end{align}									 
where we have adopted the structure of the linearized PDE in Theorem \ref{thm:LocalExistenceGeneral} again. To prove short-time existence we follow the same strategy as in Section \ref{ssec:ShortTimeExMCF}. We define again functions $\Phi := (u,\varrho)$, $\Phi_0 := (u_0,\varrho_0)$, spaces
\begin{align*}
	\mathbb{E} & := Z_u \times Z_\varrho, \qquad \mathbb{F} := X \times Y_0 \times Y_1 \times \{0\}, \\
	\mathbb{I} & := \left\{(u_0,\varrho_0) \in \pi Z_u \times \pi Z_\varrho \mid u_0|_{\d \Gamma^*} = \varrho_0\right\}
\end{align*}
and the operator $L: \mathbb{E} \longrightarrow \mathbb{F} \times \mathbb{I}$ as the left-hand side of (\ref{eq:LocalLinearWillmore1})-(\ref{eq:LocalLinearWillmore6}). For the right-hand side of the contraction mapping principle we define the non-linear operator $N: \mathbb{E} \to \mathbb{F}$ as
\begin{align*}
	N(\Phi) := \left(F(u), G(u,\varrho), H(u,\varrho), 0\right)^T,
\end{align*}
where
\begin{align*}
	F(u)			 & := -\Delta_\Gamma H_\Gamma(u) - \frac{1}{2} H_\Gamma(u) \left(H_\Gamma(u)^2 - 4 K_\Gamma(u)\right) - V_\Gamma(u) + \frac{d}{dt} u + \mathcal{A}(D) u \\
	G(u,\varrho) & := \frac{1}{2} \sin(\alpha) (\nabla_\Gamma H_\Gamma(u) \cdot n_{\d \Gamma}(u)) + a + b \kappa_{\d D}(\varrho) - v_{\d D}(\varrho) \\
					 & + \frac{d}{dt} \varrho + \mathcal{B}_0(D) u + \mathcal{C}_0(D_\d) \varrho \\
	H(u,\varrho) & := H_\Gamma(u) + \mathcal{B}_1(D) u + \mathcal{C}_1(D_\d) \varrho.
\end{align*}
In order to solve the equation $L \Phi = (N \Phi, \Phi_0)$ by the contraction mapping principle we have to prove some technical lemmas again.

\begin{lemma}\label{lem:EmbeddingsWillmore}
(i) Let $6 < p < \infty$. Then one gets for any $\sigma \in (0,1)$
\begin{align}\label{eq:CrucialEmbeddingWillmore}
	Z_u & \hookrightarrow W^\sigma_p(J;W^{4(1 - \sigma)}_p(\Gamma^*;\R)), \notag \\
	Z_\varrho & \hookrightarrow W^{\sigma(\frac{5}{4} - \frac{1}{4p})}_p(J;W^{(1 - \sigma)(4 - \frac{1}{p})}_p(\d \Gamma^*;\R)), \notag \\
	Y_0 & \hookrightarrow W^{\sigma(\frac{1}{4} - \frac{1}{4p})}_p(J;W^{(1 - \sigma)(1 - \frac{1}{p})}_p(\d \Gamma^*;\R)), \notag \\
	Z_u & \hookrightarrow BUC(J;W^{4 - \frac{4}{p}}_p(\Gamma^*;\R)) \hookrightarrow BUC(J;BUC^3(\Gamma^*;\R)), \\
	Z_\varrho & \hookrightarrow BUC(J;W^{4 - \frac{4}{p}}_p(\d \Gamma^*;\R)) \hookrightarrow BUC(J;BUC^3(\d \Gamma^*;\R)). \notag
\end{align}
\end{lemma}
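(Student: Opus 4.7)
The plan is to assemble the lemma from three well-known ingredients: mixed-derivative interpolation, the trace characterization of maximal $L_p$-regularity spaces, and classical Sobolev embedding in space.

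First I would establish the three interpolation embeddings by applying Lemma~4.3 of \cite{DSS08} to each of the intersection spaces $Z_u$, $Z_\varrho$, $Y_0$, exactly as was done for the analogous Lemma~\ref{lem:Embeddings} in the MCF setting. For $Z_u = W^1_p(J;L_p(\Gamma^*;\R))\cap L_p(J;W^4_p(\Gamma^*;\R))$ this directly produces $Z_u\hookrightarrow W^\sigma_p(J;W^{4(1-\sigma)}_p(\Gamma^*;\R))$, and for $Y_0$ the argument is identical with endpoint pairs $(\tfrac14-\tfrac1{4p},0)$ and $(0,1-\tfrac1p)$. For the triple intersection $Z_\varrho$ it suffices to apply the same interpolation between its first and third defining spaces, namely $W^{5/4-1/(4p)}_p(J;L_p)$ and $L_p(J;W^{4-1/p}_p)$, which yields exactly the claimed mixed exponents; the middle space $W^1_p(J;W^{1-1/p}_p)$ is harmless here and really only plays a role in fixing the correct trace space at $t=0$.

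For the $BUC$-in-time embeddings I would invoke the standard trace identification for maximal $L_p$-regularity spaces (Theorem~III.4.10.2 of \cite{Ama95}, already used elsewhere in the paper): any $f\in W^1_p(J;X_0)\cap L_p(J;X_1)$ admits a continuous representative in $BUC(J;(X_0,X_1)_{1-1/p,p})$. Applied to $Z_u$ with $(X_0,X_1)=(L_p(\Gamma^*),W^4_p(\Gamma^*))$ this gives $Z_u\hookrightarrow BUC(J;W^{4-4/p}_p(\Gamma^*;\R))$. For $Z_\varrho$ one uses the chain $Z_\varrho\subseteq W^1_p(J;W^{1-1/p}_p(\d\Gamma^*))\cap L_p(J;W^{4-1/p}_p(\d\Gamma^*))$ and applies the same trace result to the couple $(W^{1-1/p}_p,W^{4-1/p}_p)$, whose real interpolation space at parameter $1-1/p$ is precisely $W^{4-4/p}_p(\d\Gamma^*;\R)=\pi Z_\varrho$ as listed in~\eqref{eq:SpacesWillmore}.

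The final purely spatial step is the classical Sobolev embedding $W^{4-4/p}_p(M;\R)\hookrightarrow BUC^3(M;\R)$, which on a compact $d$-dimensional manifold requires the fractional order to exceed $3+d/p$. On the two-dimensional surface $\Gamma^*$ this condition $4-\tfrac4p>3+\tfrac2p$ is equivalent to $p>6$, which is exactly the binding constraint and the reason for the hypothesis of the lemma. On the one-dimensional boundary curve $\d\Gamma^*$ the analogous inequality $4-\tfrac4p>3+\tfrac1p$ reduces to $p>5$ and is therefore automatic. The only real obstacle in the whole argument is the book-keeping in paragraph two, where one has to verify that the interpolation couple controlling the $t=0$-trace of $Z_\varrho$ (which comes from an intersection of three, not two, Bochner--Sobolev spaces) genuinely produces the advertised space $W^{4-4/p}_p(\d\Gamma^*;\R)$; all remaining steps are direct invocations of \cite{DSS08} and \cite{Ama95}.
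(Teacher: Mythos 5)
Your proposal is correct and follows essentially the same route as the paper, whose proof is the one-line citation of Lemmas 4.3 and 4.4 of \cite{DSS08} together with the Sobolev embeddings under $p>6$; you have simply written out the details, including the correct verification that the interpolation couple $\bigl(W^{1-1/p}_p(\d\Gamma^*),W^{4-1/p}_p(\d\Gamma^*)\bigr)$ at parameter $1-1/p$ yields $W^{4-4/p}_p(\d\Gamma^*)$ and that the binding constraint $4-\tfrac4p>3+\tfrac2p$ on the two-dimensional $\Gamma^*$ is exactly $p>6$.
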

\begin{proof}
Follows from Lemmas 4.3 and 4.4. of \cite{DSS08} and usual Sobolev embeddings with our assumption $p > 6$.
\end{proof}

\begin{rem}
The embedding (\ref{eq:CrucialEmbeddingWillmore}) is only valid for $p > 6$ and will be crucial in the considerations to follow. That is why we are forced to change the range of $p$ from $p > 5$ in Theorem \ref{thm:LocalExistenceWillmoreGeneral} to $p > 6$ in our final Theorem \ref{thm:ShortTimeExistenceWillmore}.
\end{rem}

Following the strategy of Section \ref{ssec:ShortTimeExMCF} we define
\begin{align*}
	\nabla^3 Z_u & := W^{\frac{1}{4}}_p(J;L_p(\Gamma^*;\R)) \cap L_p(J;W^1_p(\Gamma^*;\R)), \\
	\nabla^2 Z_u & := W^{\frac{1}{2}}_p(J;L_p(\Gamma^*;\R)) \cap L_p(J;W^2_p(\Gamma^*;\R)), \\
	\nabla^2 Z_\varrho & := W^{\frac{2}{3} - \frac{1}{3p}}_p(J;L_p(\d \Gamma^*;\R)) \cap L_p(J;W^{2 - \frac{1}{p}}_p(\d \Gamma^*;\R))
\end{align*}
as the spaces containing the third and second space derivatives of $u$ and the second arc-length derivatives of $\varrho$, respectively.

\begin{lemma}\label{lem:AlgebraWillmore}
Let $6 < p < \infty$. Then the spaces
\begin{align*}
	\text{(i)} &\quad & \nabla^3 Z_u &= W^{\frac{1}{4}}_p(J;L_p(\Gamma^*;\R)) \cap L_p(J;W^1_p(\Gamma^*;\R)), \\
	\text{(ii)} &\quad & \nabla^2 Z_u &= W^{\frac{1}{2}}_p(J;L_p(\Gamma^*;\R)) \cap L_p(J;W^2_p(\Gamma^*;\R)), \\
	\text{(iii)} &\quad & \nabla^2 Z_\varrho &= W^{\frac{2}{3} - \frac{1}{3p}}_p(J;L_p(\d \Gamma^*;\R)) \cap L_p(J;W^{2 - \frac{1}{p}}_p(\d \Gamma^*;\R))
\end{align*}
are Banach algebras up to a constant in the norm estimate for the product.
\end{lemma}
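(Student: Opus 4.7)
The plan is to imitate the argument for Lemma~\ref{lem:AlgebraMCF}. For each of the three spaces I would first establish an embedding into $BUC(J; BUC^k(M;\R))$ with $M \in \{\Gamma^*, \d\Gamma^*\}$ and $k$ chosen so that every factor arising from a Leibniz expansion can be placed in $L_\infty$, and then bound $\norm{fg}$ in the anisotropic intersection norm $W^s_p(J; L_p(M)) \cap L_p(J; W^r_p(M))$ by splitting it into its time-Slobodeckij piece and its space-Sobolev piece.

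First, the trace inclusions provided by Lemma~\ref{lem:EmbeddingsWillmore} (or equivalently by Lemmas 4.3 and 4.4 of \cite{DSS08} with a suitable choice of interpolation parameter), combined with the standing assumption $p > 6$, yield
\begin{align*}
	\nabla^3 Z_u       & \hookrightarrow BUC(J; W^{1-\frac{4}{p}}_p(\Gamma^*;\R)) \hookrightarrow BUC(J; BUC(\Gamma^*;\R)),\\
	\nabla^2 Z_u       & \hookrightarrow BUC(J; W^{2-\frac{4}{p}}_p(\Gamma^*;\R)) \hookrightarrow BUC(J; BUC^1(\Gamma^*;\R)),\\
	\nabla^2 Z_\varrho & \hookrightarrow BUC(J; W^{2-\frac{4}{p}}_p(\d\Gamma^*;\R)) \hookrightarrow BUC(J; BUC^1(\d\Gamma^*;\R)),
\end{align*}
where $p > 6$ is precisely what is needed for the target Sobolev index to exceed the embedding threshold of the respective underlying manifold ($2/p$ or $1 + 2/p$ on the two-dimensional $\Gamma^*$, and $1 + 1/p$ on the one-dimensional $\d\Gamma^*$).

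Second, for $f, g$ in any of the three spaces the product estimate decouples into two pieces. On the $L_p(J; W^r_p(M))$-side I would apply the Leibniz rule and bound each term of a derivative of order at most $r$ of $fg$ by placing the factor of strictly lower differential order into $L_\infty(J; L_\infty(M))$ via the embedding above, leaving the other factor in $L_p(J; W^r_p(M))$. On the $W^s_p(J; L_p(M))$-side the pointwise decomposition
\begin{align*}
	f(t) g(t) - f(\tau) g(\tau) = (f(t) - f(\tau)) g(t) + f(\tau) (g(t) - g(\tau))
\end{align*}
inserted into the Slobodeckij double integral yields
\begin{align*}
	\norm{fg}_{W^s_p(J; L_p(M))} \leq c\left(\norm{f}_{L_\infty(J \times M)} \norm{g}_{W^s_p(J; L_p(M))} + \norm{g}_{L_\infty(J \times M)} \norm{f}_{W^s_p(J; L_p(M))}\right).
\end{align*}
Summing the two bounds and invoking the $L_\infty$-embedding once more gives the desired algebra estimate $\norm{fg} \leq c\, \norm{f}\, \norm{g}$.

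The main obstacle I expect lies in case (iii), where the spatial regularity $r = 2 - \frac{1}{p}$ is fractional and the classical Leibniz rule is not directly applicable. There I would differentiate $fg$ classically once in the arc-length variable $\sigma$ and then bound the resulting cross-terms $f_\sigma g$ and $f g_\sigma$ in $W^{1-\frac{1}{p}}_p(\d \Gamma^*;\R)$ via the bilinear Slobodeckij estimate (available since $1 - \frac{1}{p} \in (0,1)$), using the embedding $\nabla^2 Z_\varrho \hookrightarrow BUC(J; BUC^1(\d\Gamma^*;\R))$ established above to control the first arc-length derivatives of both factors in $L_\infty$. The associated tedious but routine constant tracking is carried out in detail in \cite{Mue13}, to which I would refer for the explicit verification.
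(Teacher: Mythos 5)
Your proposal follows essentially the same route as the paper: the proof there consists precisely of establishing the $BUC(J;BUC^k)$ embeddings for the three spaces via Lemmas 4.3/4.4 of \cite{DSS08} and then declaring that the algebra property follows exactly as in Lemma \ref{lem:AlgebraMCF}, with the Slobodeckij/Leibniz details you spell out deferred to \cite{Mue13}. Your exponent $2-\frac{4}{p}$ in case (iii) is in fact the correct trace index (the paper's stated $W^{4-\frac{4}{p}}_p$ and $BUC^3$ there appear to be a slip), and the embedding into $BUC^1(\d\Gamma^*)$ already holds for $p>5$, so your argument is sound.
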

\begin{proof}
The crucial ingredient for the proof is the embedding into the space of bounded uniformly continuous functions. Therefore we will only prove these embeddings and the rest will follow in the same manner as in Lemma \ref{lem:AlgebraMCF}. \\
(i) Lemma 4.3 of \cite{DSS08} gives the embedding
\begin{align*}
	\nabla^3 Z_u \hookrightarrow BUC(J;W^{1 - \frac{4}{p}}_p(\Gamma^*;\R)) \hookrightarrow BUC(J;BUC(\Gamma^*;\R)),
\end{align*}
where we used $p > 6$ in the second step. \\
(ii) Here Lemma 4.4 of \cite{DSS08} gives the embedding
\begin{align*}
	\nabla^2 Z_u \hookrightarrow BUC(J;W^{2 - \frac{4}{p}}_p(\Gamma^*;\R)) \hookrightarrow BUC(J;BUC^1(\Gamma^*;\R)),
\end{align*}
where again we have used $p > 6$ in the second step. \\
(iii) The same lemma as in (i) shows
\begin{align*}
	\nabla^2 Z_\varrho \hookrightarrow BUC(J;W^{4 - \frac{4}{p}}_p(\d \Gamma^*;\R)) \hookrightarrow BUC(J;BUC^3(\d \Gamma^*;\R)),
\end{align*}
where we used $p > 5$ in the second embedding.
\end{proof}

\begin{lemma}\label{lem:TechnicalPreliminariesWillmore}
Let $J := [0,T]$ and $6 < p < \infty$ and $\mathbb{B}^\mathbb{E}_r(\mathbb{O}) := \left\{\Phi \in \mathbb{E} \left| \norm{\Phi}_{\mathbb{E}} < r\right.\right\}$. Then there exists an $r > 0$ such that $N(\mathbb{B}^\mathbb{E}_r(\mathbb{O})) \subseteq \mathbb{F}$. Moreover, $N \in C^1(\mathbb{B}^\mathbb{E}_r(\mathbb{O});\mathbb{F})$ and $DN[\mathbb{O}] = \mathbb{O}$, where $DN$ denotes the Fr\'echet derivative of $N$.
\end{lemma}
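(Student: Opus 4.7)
The plan is to follow the template of Lemma~\ref{lem:TechnicalPreliminariesMCF}, adapted to the fourth-order setting. Three things must be verified: (i) the mapping property $N(\mathbb{B}^\mathbb{E}_r(\mathbb{O})) \subseteq \mathbb{F}$ for $r$ small enough, (ii) that $N \in C^1(\mathbb{B}^\mathbb{E}_r(\mathbb{O});\mathbb{F})$, and (iii) that $DN[\mathbb{O}] = \mathbb{O}$. The last item is noticeably stronger than in the MCF case, and this is the crucial qualitative gain coming from the locality of the Willmore operator.

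For (i), we write every geometric operator appearing in $N$ in quasilinear form, e.g.\
\begin{align*}
\Delta_\Gamma H_\Gamma(u) = \sum_{|\alpha|=4} A_\alpha\bigl(u,\nabla_{\Gamma^*} u, \nabla_{\Gamma^*}^2 u, \nabla_{\Gamma^*}^3 u\bigr)\,\d^\alpha u + B\bigl(u,\nabla_{\Gamma^*} u, \nabla_{\Gamma^*}^2 u, \nabla_{\Gamma^*}^3 u\bigr),
\end{align*}
and analogously for $H_\Gamma(u)$, $K_\Gamma(u)$, $\nabla_\Gamma H_\Gamma(u)\cdot n_{\d\Gamma}(u)$, $V_\Gamma(u)$, $v_{\d D}(\varrho)$ and $\kappa_{\d D}(\varrho)$, where the coefficients are smooth on a closed neighbourhood of the origin. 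By Lemma~\ref{lem:EmbeddingsWillmore} we have $Z_u \hookrightarrow BUC(J;BUC^3(\Gamma^*))$ and $Z_\varrho \hookrightarrow BUC(J;BUC^3(\d\Gamma^*))$, so for $r$ sufficiently small all these coefficients remain uniformly bounded and the pullback first fundamental form is non-degenerate, guaranteeing that every term is well defined. The top-order contribution to $F(u)$ lies in $X$ because $u\in L_p(J;W^4_p(\Gamma^*))$; the linear parts $\d_t u + \mathcal{A}(D)u$ and $\d_t\varrho + \mathcal{B}_0(D) u + \mathcal{C}_0(D_\d)\varrho$ are in $X$ and $Y_0$ by the very definition of $Z_u, Z_\varrho$. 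For the boundary component $G$, the third-order principal part $-\tfrac{1}{2}\sin(\alpha) n_{\d\Gamma^*}\cdot\nabla_{\Gamma^*}\Delta_{\Gamma^*} u$ lands in $Y_0$ via $Z_u \hookrightarrow W^{1/4}_p(J;W^3_p(\Gamma^*))$ (Lemma~4.3 of \cite{DSS08} with $\sigma = 1/4$) combined with the space-time trace theorem (A.24) of \cite{Gru95}; the remaining curvature, velocity and angle terms are handled exactly as in Lemma~\ref{lem:TechnicalPreliminariesMCF}. For $H(u,\varrho)$, the second-order principal part $H_\Gamma(u)$ traces into $Y_1$ using $Z_u\hookrightarrow W^{1/2}_p(J;W^2_p(\Gamma^*))$ and again (A.24) of \cite{Gru95}.

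For (ii), the candidate Fr\'echet derivative is, by construction, the linearization computed in Section~\ref{sec:LinearWillmore} together with those in Section~\ref{ssec:LinearMCF}. As in the MCF case it suffices to show that $\delta_\Phi N$ is Lipschitz on $\bar{B_r(0)} \subseteq \mathbb{E}$. For each summand, after pulling out the highest-order derivative one is left with a product of a smooth function of $u, \nabla_{\Gamma^*} u, \nabla_{\Gamma^*}^2 u, \nabla_{\Gamma^*}^3 u$ (respectively $\varrho, \d_\sigma\varrho, \d_\sigma^2\varrho$) and a derivative of $u$ or $\varrho$ of one order higher. The Banach-algebra structure of $\nabla^3 Z_u$, $\nabla^2 Z_u$ and $\nabla^2 Z_\varrho$ from Lemma~\ref{lem:AlgebraWillmore}, together with the $L_\infty$-embeddings in Lemma~\ref{lem:EmbeddingsWillmore}, allows exactly the same integral-form Lipschitz estimate
\begin{align*}
\|f(u)-f(\tilde u)\|_{\nabla^k Z_u} \leq c\,\|u-\tilde u\|_{\nabla^k Z_u}
\end{align*}
as in Lemma~\ref{lem:TechnicalPreliminariesMCF}, which delivers $N\in C^1(\mathbb{B}^\mathbb{E}_r(\mathbb{O});\mathbb{F})$ after choosing $r$ appropriately.

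For (iii), in sharp contrast with the MCF situation, $N$ contains no non-local term (no $\bar H$ analogue appears in the Willmore flow). By construction the linear parts $\mathcal{A}(D)$, $\mathcal{B}_0(D)$, $\mathcal{C}_0(D_\d)$ and $\mathcal{B}_1(D)$ were defined to be precisely the linearizations of $-\Delta_\Gamma H_\Gamma(u) - \tfrac{1}{2}H_\Gamma(u)(H_\Gamma(u)^2 - 4K_\Gamma(u)) - V_\Gamma(u)$, of $\tfrac{1}{2}\sin(\alpha)(\nabla_\Gamma H_\Gamma(u)\cdot n_{\d\Gamma}(u)) + b\kappa_{\d D}(\varrho) - v_{\d D}(\varrho)$ and of $H_\Gamma(u)$ around $\rho\equiv 0$, as established in Lemmas~\ref{lem:LinearLaplaceH}--\ref{lem:LinearGradHConormal} together with the results of Section~\ref{ssec:LinearMCF}. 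Subtracting the linear operators from the nonlinear ones therefore produces a map whose Fr\'echet derivative at $\mathbb{O}$ vanishes identically in each component, i.e.\ $DN[\mathbb{O}]=\mathbb{O}$.

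The main obstacle is the careful bookkeeping in step~(i): identifying the correct anisotropic embedding $Z_u \hookrightarrow W^{\sigma}_p(J;W^{4(1-\sigma)}_p(\Gamma^*))$ for each principal boundary operator so that its trace lands in $Y_0$ or $Y_1$ with matching time and space regularity, which is where the sharpened assumption $p>6$ (from Lemma~\ref{lem:EmbeddingsWillmore}) is essential. Step~(ii) is lengthy but structurally identical to the MCF proof, and step~(iii) is almost immediate once the linearizations of Section~\ref{sec:LinearWillmore} are in place.
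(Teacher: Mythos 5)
Your proposal is correct and follows essentially the same route as the paper: quasilinear decomposition of the fourth-order operators, the $BUC(J;BUC^3)$ embeddings of Lemma \ref{lem:EmbeddingsWillmore} to control coefficients, anisotropic embeddings plus the trace theorem (A.24) of \cite{Gru95} for the boundary components, the Banach-algebra structure of $\nabla^3 Z_u$, $\nabla^2 Z_u$, $\nabla^2 Z_\varrho$ for the Lipschitz continuity of the Fr\'echet derivative, and the observation that the absence of a non-local $\bar H$-term makes $DN[\mathbb{O}]=\mathbb{O}$ immediate from the construction of the linear operators. Your bookkeeping of the specific exponents ($\sigma=1/4$ for the third-order boundary operator into $Y_0$, $\sigma=1/2$ for $H_\Gamma$ into $Y_1$) is in fact slightly more explicit than what the paper records.
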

\begin{proof}
The fact that $DN[\mathbb{O}] = \mathbb{O}$ is obvious due to the structure of the linearization in Section \ref{sec:LinearWillmore}. \\
Our first goal is to show that $F(u) \in X$, $G(u,\varrho) \in Y_0$ and $H(u) \in Y_1$ for all $(u,\varrho) \in \mathbb{B}^\mathbb{E}_r(\mathbb{O})$. For $r > 0$ small enough all the terms appearing in $F$, $G$ and $H$ are well-defined and the linear parts of $F$, $G$ and $H$ can be omitted by the same arguments as in Lemma \ref{lem:TechnicalPreliminariesMCF}. Also the velocities in $F$ and $G$ are bounded in the same manner as in the proof of Lemma \ref{lem:TechnicalPreliminariesMCF}. \\
By Lemma \ref{lem:EmbeddingsWillmore} we see that $|u(t,q)|$, $|\nabla_{\Gamma^*} u(t,q)|$, $|\nabla_{\Gamma^*}^2 u(t,q)|$ and $|\nabla_{\Gamma^*}^3 u(t,q)|$ stay bounded. This shows that for a maybe even smaller $r$ the first fundamental form of all the hypersurfaces in the family $\left(\Gamma_{\rho}(t)\right)_{t \in J}$ is not degenerated. Because of the fact that $\Delta_\Gamma H_\Gamma(u)$ depends linearly on the fourth space derivatives of $u$ and that the coefficients, involving only $u$ and its first to third spacial derivatives, are bounded, we get
\begin{align*}
	\norm{\Delta_\Gamma H_\Gamma(u)}_X \leq c \left(\norm{\nabla_{\Gamma^*}^4 u}_X + 1\right) \leq c \left(\norm{u}_{L_p(J;W^4_p(\Gamma^*;\R))} + 1\right) \leq c \left(\norm{u}_{Z_u} + 1\right) < \infty.
\end{align*}
Because of the fact that $|u(t,q)|$, $|\nabla_{\Gamma^*} u(t,q)|$, $|\nabla_{\Gamma^*}^2 u(t,q)|$ and $|\nabla_{\Gamma^*}^3 u(t,q)|$ stay bounded and $H_\Gamma(u)$ and $K_\Gamma(u)$ depend continuously on $u$ and its first and second spacial derivatives, we obtain that $\norm{H_\Gamma(u)}_X$, $\norm{H_\Gamma(u)^2}_X$, $\norm{K_\Gamma(u)}_X$, $\norm{H_\Gamma(u)}_{Y_1}$ and $\norm{\nabla_\Gamma H_\Gamma(u)}_{Y_0}$ are bounded in a similar way as in Lemma \ref{lem:TechnicalPreliminariesMCF}. \\
For $\kappa_{\d D}(\varrho)$ we observe by Lemma \ref{lem:EmbeddingsWillmore} that $Z_\varrho \hookrightarrow BUC(J;BUC^3(\d \Gamma^*;\R))$ and hence $|\varrho(t,q)|$, $|\d_{\sigma} \varrho(t,q)|$ and $|\d_{\sigma}^2 \varrho(t,q)|$ stay bounded. In the same way as for $H_\Gamma(u)$, the continuous dependence of $\kappa_{\d D}(\varrho)$ on $\varrho$ and its derivatives shows the boundedness and $\norm{\kappa_{\d D}(\varrho)}_{Y_0} < \infty$. This shows $N(\mathbb{B}^\mathbb{E}_r(\mathbb{O})) \subseteq \mathbb{F}$. \\
What is left is the fact that $N \in C^1(\mathbb{B}^\mathbb{E}_r(\mathbb{O});\mathbb{F})$, for which it is enough to show Lipschitz continuity of the several parts of $N$. Again we just consider the highest order term $\Delta_\Gamma H_\Gamma$, since the Lipschitz continuity of the remaining terms follows analogously. We know that we can write
\begin{align*}
	\Delta_\Gamma H_\Gamma(u) = \sum_{|\alpha|=4} a_\alpha(\underbrace{u, \nabla_{\Gamma^*} u, \nabla_{\Gamma^*}^2 u, \nabla_{\Gamma^*}^3 u}_{=: \mathbb{U}}) \d^\alpha u + b(\mathbb{U})
\end{align*}
with $a_\alpha, b \in C^3(U)$ and $U \subseteq \R \times \R^2 \times \R^{2 \times 2} \times \R^{2 \times 2 \times 2}$ a closed neighborhood of $0$. Linearizing this we obtain
\begin{align*}
	(D_u \Delta_\Gamma H_\Gamma(u))(v) & = \sum_{|\alpha|=4} \Big(\d^\alpha u \d_1 a_\alpha(\mathbb{U}) v + \d^\alpha u \d_2 a_\alpha(\mathbb{U})(\nabla_{\Gamma^*} v) + \d^\alpha u \d_3 a_\alpha(\mathbb{U})(\nabla_{\Gamma^*}^2 v)\\
& + \d^\alpha u \d_4 a_\alpha(\mathbb{U})(\nabla_{\Gamma^*}^3 v) + a_\alpha(\mathbb{U}) \d^\alpha v\Big) + \d_1 b(\mathbb{U}) v + \d_2 b(\mathbb{U})(\nabla_{\Gamma^*} v) \\
		& + \d_3 b(\mathbb{U})(\nabla_{\Gamma^*}^2 v) + \d_4 b(\mathbb{U})(\nabla_{\Gamma^*}^3 v)
\end{align*}
Again by the smoothness of $a$ and $b$ the coefficients $\d_i a_\alpha, \d_i b$ with $i \in \{1, 2, 3, 4\}$ and $a_\alpha$ satisfy a Lipschitz condition on $\bar{B_r(0)} \subseteq \nabla^3 Z_u$. Because of $\norm{\bullet}_{\nabla^3 Z_u} \leq c \norm{\bullet}_{Z_u}$, any two $u, \tilde{u} \in \bar{B_r(0)} \subseteq Z_u$ are also in $\bar{B_{cr}(0)} \subseteq \nabla^3 Z_u$. Similar to $H_\Gamma$ in Lemma \ref{lem:TechnicalPreliminariesMCF} we use this time $\norm{\nabla_{\Gamma^*}^i}_{\mathcal{L}(Z_u,X)} < \infty$ for $i \in \{0, 1, 2, 3, 4\}$ and the embedding $\nabla^3 Z_u \hookrightarrow X$ to prove for $u, \tilde{u} \in \bar{B_r(0)} \subseteq Z_u$ the estimate
\begin{align*}
	\left\|D_u \Delta_\Gamma H_\Gamma(\mathbb{U}) - D_u \Delta_\Gamma H_\Gamma(\tilde{\mathbb{U}})\right\|_{\mathcal{L}(Z_u,X)} \leq c(r) \norm{u - \tilde{u}}_{Z_u}.
\end{align*}
This shows the Lipschitz continuity of $D_u \Delta_\Gamma H_\Gamma: \bar{B_r(0)} \subseteq Z_u \longrightarrow \mathcal{L}(Z_u,X)$ and hence we see $\Delta_\Gamma H_\Gamma \in C^1(\bar{B_r(0)},X)$. For more details on this proof see \cite{Mue13}. 
\end{proof}

After proving this technical lemma we can again apply the contraction mapping principle in the following lemma.

\begin{rem}
Again it is important that $L$ is an isomorphism. We do not need to consider $g_0(0) - \mathcal{B}_0(0) u_0 - \mathcal{C}_0(0) \varrho_0 \in \pi_1 Z_\varrho$ from Theorem \ref{thm:LocalExistenceWillmoreGeneral} by the same argumentation as in proof of Corollary \ref{cor:LocalExistenceWillmore}. Moreover, the condition $\mathcal{B}_2(0) u_0 + \mathcal{C}_2(0) \varrho_0 = g_2(0)$ can be dropped because $g_2 \equiv 0$ and $(u_0,\varrho_0) \in \mathbb{I}$. Finally, $\mathcal{B}_1(0) u_0 + \mathcal{C}_1(0) \varrho_0 = g_1(0)$ reduces to $\mathcal{B}_1(0) u_0 = g_1(0)$. Due to Theorem \ref{thm:LocalExistenceWillmoreGeneral} the operator $L$ is an isomorphism between the spaces $\mathbb{E}$ and
\begin{align*}
	\mathbb{F}_0 \times \mathbb{I} := \left\{(f, g_0, g_1, 0, u_0, \varrho_0) \in \mathbb{F} \times \mathbb{I} \ \left| \ \mathcal{B}_1(0) u_0 = g_1(0) \right.\right\} \times \mathbb{I}.
\end{align*}
\end{rem}

\begin{lemma}\label{lem:UniformlyWillmore}
Let $T_0 > 0$ be fixed and $T \in (0,T_0)$ arbitrary. \\
(i) There exists a bounded extension operator from $\mathbb{F}_T$ to $\mathbb{F}_{T_0}$, i.e., for all $f \in \mathbb{F}_T$ there is a $\tilde{f} \in \mathbb{F}_{T_0}$ with $\left.\tilde{f}\right|_{[0,T]} = f$ and $\norm{\tilde{f}}_{\mathbb{F}_{T_0}} \leq c(T_0) \norm{f}_{\mathbb{F}_T}$. \\
(ii) The operator norm of $L_T^{-1}: \mathbb{F}_T \times \mathbb{I} \longrightarrow \mathbb{E}_T$ is uniformly bounded in $T$. \\
(iii) There exists a bounded extension operator from $\mathbb{E}_T$ to $\mathbb{E}_{T_0}$, i.e., for all $\Phi \in \mathbb{E}_T$ there is a $\tilde{\Phi} \in \mathbb{E}_{T_0}$ with $\left.\tilde{\Phi}\right|_{[0,T]} = \Phi$ and $\norm{\tilde{\Phi}}_{\mathbb{E}_{T_0}} \leq c(T_0) \norm{\Phi}_{\mathbb{E}_T}$. \\
(iv) The uniform estimate $\norm{DN_T[\Phi]}_{\mathcal{L}(\mathbb{E}_T,\mathbb{F}_T)} \leq c(T_0) \norm{\Phi}_{\mathbb{E}_T} < \infty$ holds for $\Phi \in B^{\mathbb{E}_T}_r(0)$.
\end{lemma}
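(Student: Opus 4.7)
The plan is to follow the same four-step template as in Lemma \ref{lem:UniformlyMCF}, adapted to the higher-order Willmore setting: first construct a bounded extension operator on the forcing side (part (i)), use it to transfer the maximal-regularity bound on $L_T^{-1}$ from $T_0$ down to arbitrary $T \in (0, T_0)$ (part (ii)), then dualize to extend solutions (part (iii)), and finally use this extension together with $DN_{T_0}[\mathbb{O}] = \mathbb{O}$ to control $DN_T$ (part (iv)).

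For (i) I would decompose any $f = (f_1, g_0, g_1, 0) \in \mathbb{F}_T$ into its four components and extend each separately. The bulk term $f_1 \in X^T = L_p(J;L_p(\Gamma^*;\R))$ is simply extended by zero. For each boundary datum $g_i \in Y_i^T$ with $i \in \{0,1\}$ the trace $g_i(T)$ lies in the corresponding trace-at-fixed-time space with a $T$-uniform bound $\norm{g_i(T)} \leq c \norm{g_i}_{Y_i^T}$, which can be read off from (A.25) of \cite{Gru95} together with Theorem III.4.10.2 of \cite{Ama95} and Lemma 7.2 of \cite{Ama05}, exactly as in the MCF case. On $[0, T_0 - T]$ one then solves an auxiliary heat-type problem $\d_t \hat g_i - \d_\sigma^2 \hat g_i = 0$ on $\d \Gamma^*$ with initial datum $g_i(T)$ and glues it to the original $g_i$ at $t = T$; maximal $L_p$-regularity for the Laplace-Beltrami operator on the compact manifold $\d \Gamma^*$ provides $\tilde g_i \in Y_i^{T_0}$ with $\norm{\tilde g_i}_{Y_i^{T_0}} \leq c(T_0) \norm{g_i}_{Y_i^T}$. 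The fourth component stays zero.

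Parts (ii) and (iii) would then follow the MCF argument almost verbatim, now invoking Corollary \ref{cor:LocalExistenceWillmore} in place of Corollary \ref{cor:LocalExistence1}. For (ii) I would extend $f$ via (i), solve $L_{T_0} \tilde\Phi = (\tilde f, \Phi_0)$ on $[0, T_0]$, use uniqueness to identify $\tilde\Phi|_{[0,T]}$ with $L_T^{-1}(f, \Phi_0)$, and chain the estimates $\norm{L_T^{-1}(f, \Phi_0)}_{\mathbb{E}_T} \leq \norm{\tilde\Phi}_{\mathbb{E}_{T_0}} \leq c(T_0)(\norm{\tilde f}_{\mathbb{F}_{T_0}} + \norm{\Phi_0}_\mathbb{I}) \leq \hat c(T_0)(\norm{f}_{\mathbb{F}_T} + \norm{\Phi_0}_\mathbb{I})$. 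For (iii) I would apply $L_T$ to $\Phi \in \mathbb{E}_T$, extend the resulting forcing via (i), and solve on $[0, T_0]$; the bound $\norm{L_T}_{\mathcal{L}(\mathbb{E}_T,\mathbb{F}_T \times \mathbb{I})} \leq c$ uniform in $T$ is immediate since all coefficient functions are time-independent and bounded.

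For (iv) the extension from (iii) yields $\tilde\Phi \in \mathbb{E}_{T_0}$ with $\tilde\Phi|_{[0,T]} = \Phi$, and since $N$ acts pointwise in time through purely geometric quantities one has $N_T(\Phi) = N_{T_0}(\tilde\Phi)|_{[0,T]}$, hence $DN_T[\Phi](v) = DN_{T_0}[\tilde\Phi](\tilde v)|_{[0,T]}$. Combining $DN_{T_0}[\mathbb{O}] = \mathbb{O}$ from Lemma \ref{lem:TechnicalPreliminariesWillmore} with the $C^1$-regularity of $N_{T_0}$ on $\mathbb{B}^{\mathbb{E}_{T_0}}_r(\mathbb{O})$ and the fundamental theorem of calculus yields $\norm{DN_{T_0}[\tilde\Phi]}_{\mathcal{L}(\mathbb{E}_{T_0}, \mathbb{F}_{T_0})} \leq C(T_0) \norm{\tilde\Phi}_{\mathbb{E}_{T_0}}$, which together with (iii) produces the claimed estimate $\norm{DN_T[\Phi]}_{\mathcal{L}(\mathbb{E}_T, \mathbb{F}_T)} \leq c(T_0) \norm{\Phi}_{\mathbb{E}_T}$. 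The one delicate point in the whole program is (i): verifying that both traces $g_0(T)$ and $g_1(T)$ really do land in the correct Sobolev-Slobodeckij spaces with $T$-uniform constants is slightly more involved than for the MCF because of the mixed non-integer indices appearing in the Willmore function spaces (\ref{eq:SpacesWillmore}), and one has to choose the auxiliary parabolic problem for each $g_i$ so that its solution lies in $Y_i^{T_0-T}$ and glues to yield an element of $Y_i^{T_0}$.
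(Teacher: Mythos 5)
Your overall architecture coincides with the paper's: the proof there is literally ``same as Lemma \ref{lem:UniformlyMCF}, with one additional component in (i) and with $-\d_\sigma^2$ replaced by $\d_\sigma^4$ in (\ref{eq:ExtensionEq})''. Parts (ii)--(iv) of your proposal are fine. The genuine gap is in your concrete construction for (i): you propose to extend $g_0$ and $g_1$ by solving the \emph{second-order} heat equation $\d_t \hat g_i - \d_\sigma^2 \hat g_i = 0$ with initial datum $g_i(T)$, and this does not work in the Willmore setting. The spaces $Y_0$ and $Y_1$ in (\ref{eq:SpacesWillmore}) carry the $1{:}4$ anisotropic scaling of a fourth-order parabolic problem (time regularity is one quarter of the spatial regularity), and the temporal trace of $Y_0$ lands only in $\pi_1 Z_\varrho = W^{1-\frac{5}{p}}_p(\d\Gamma^*)$, i.e. in $W^{r-\frac{4}{p}}_p$ with $r = 1-\frac1p$; similarly $g_1(T)$ lies only in $W^{2-\frac{5}{p}}_p$. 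For the heat equation to deliver a maximal-regularity solution in $L_p(J;W^{r}_p)$ one would need the initial datum in $W^{r-\frac{2}{p}}_p$, which is strictly more regularity than the traces $g_0(T)$ and $g_1(T)$ possess. Starting the heat flow from $W^{r-\frac4p}_p$ only yields $L_p(J;W^{r-\frac2p}_p)$ in space, which fails to embed into $L_p(J;W^{1-\frac1p}_p)$ resp. $L_p(J;W^{2-\frac1p}_p)$, so the glued function is not in $Y_i^{T_0}$.

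The fix — and the one modification the paper actually insists on — is to take the fourth-order parabolic problem $\d_t \hat g_i + \d_\sigma^4 \hat g_i = 0$ on $\d\Gamma^*$ with initial datum $g_i(T)$. Its trace space for target regularity $W^r_p$ is exactly $W^{r-\frac4p}_p$, matching $\pi_1 Z_\varrho$ (and the analogous space for $g_1$), and its maximal-regularity class $W^1_p(J;W^{r-4}_p)\cap L_p(J;W^{r}_p)$ interpolates precisely into $W^{\frac14-\frac{1}{4p}}_p(J;L_p)\cap L_p(J;W^{1-\frac1p}_p) = Y_0^{T_0}$ (and correspondingly into $Y_1^{T_0}$). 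You flag at the very end that the auxiliary problem must be ``chosen so that its solution lies in $Y_i^{T_0-T}$'', which is the right instinct, but the equation you actually wrote down is the wrong one, and the scaling mismatch is exactly the point where the Willmore case differs from the MCF case.
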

\begin{proof}
Can be shown in the same way as in Lemma \ref{lem:UniformlyMCF} only with one additional component while proving (i) and replacing $-\d_\sigma^2$ by $\d_\sigma^4$ in (\ref{eq:ExtensionEq}) (cf. \cite{Mue13} for details).
\end{proof}

\begin{lemma}\label{lem:FixpointNonlinearWillmore}
Let $6 < p < \infty$ and $J := [0,T]$ where $T > 0$ must be chosen sufficiently small. Then there exists some $\epsilon > 0$ such that for each $\Phi_0 = (u_0, \varrho_0) \in \mathbb{I}$ with $\norm{\Phi_0}_\mathbb{I} < \epsilon$ and $H_\Gamma(u_0) = 0$ there exists a unique solution $\Phi = (u, \varrho) \in \mathbb{E}$ of $L \Phi = (N(\Phi),\Phi_0)$.
\end{lemma}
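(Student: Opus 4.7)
The plan is to apply the contraction mapping principle to $K(\Phi) := L^{-1}(N(\Phi),\Phi_0)$ on the closed set $\mathbb{X}_r := \{\Phi \in \overline{\mathbb{B}^\mathbb{E}_r(\mathbb{O})} : \Phi(0) = \Phi_0\}$, following the same strategy as in Lemma \ref{lem:FixpointNonlinearMCF}. The first step is to check that $K$ is well-defined on $\mathbb{X}_r$: by the remark preceding the lemma, $L$ is an isomorphism of $\mathbb{E}$ onto $\mathbb{F}_0 \times \mathbb{I}$, where $\mathbb{F}_0$ encodes the single remaining compatibility condition $\mathcal{B}_1(D) u_0 = g_1(0)$. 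For our nonlinear $N$ this condition reads $\mathcal{B}_1(D) u_0 = H_\Gamma(u_0) + \mathcal{B}_1(D) u_0 + \mathcal{C}_1(D_\d)\varrho_0$, which, using $\mathcal{C}_1 \equiv 0$, reduces exactly to the hypothesis $H_\Gamma(u_0) = 0$. Hence $(N(\Phi),\Phi_0) \in \mathbb{F}_0 \times \mathbb{I}$ for every $\Phi \in \mathbb{X}_r$ and $K(\Phi) \in \mathbb{E}$ is defined.

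The crucial simplification compared to the MCF case is that, by Lemma \ref{lem:TechnicalPreliminariesWillmore}, $DN[\mathbb{O}] = \mathbb{O}$; in particular there is no nonlocal term to estimate by a fractional power of $T$. By the continuity of $DN$ and Lemma \ref{lem:UniformlyWillmore}(iv), one can choose $r>0$ small (independently of $T \in (0,T_0]$) so that
\begin{align*}
\sup_{\Psi \in \mathbb{B}^\mathbb{E}_r(\mathbb{O})} \|DN[\Psi]\|_{\mathcal{L}(\mathbb{E},\mathbb{F})} \leq \frac{1}{4 \sup_{T \in (0,T_0]} \|L_T^{-1}\|}.
\end{align*}
Next, since $N(\mathbb{O})$ involves only time-independent quantities on $\Gamma^*$ and $\d\Gamma^*$, one has $\|N(\mathbb{O})\|_\mathbb{F} \to 0$ as $T \to 0$ by the same argument as in the proof of Lemma \ref{lem:FixpointNonlinearMCF}.

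The self-mapping and contraction estimates are now routine. For $\Phi \in \mathbb{X}_r$, the mean-value inequality gives
\begin{align*}
\|K(\Phi)\|_\mathbb{E} \leq \|L^{-1}\|\bigl(\|N(\Phi) - N(\mathbb{O})\|_\mathbb{F} + \|N(\mathbb{O})\|_\mathbb{F} + \|\Phi_0\|_\mathbb{I}\bigr) \leq \tfrac{r}{4} + \|L^{-1}\|\bigl(\|N(\mathbb{O})\|_\mathbb{F} + \|\Phi_0\|_\mathbb{I}\bigr),
\end{align*}
and the last two terms can be made at most $\tfrac{3r}{4}$ by choosing $T$ so small that $\|N(\mathbb{O})\|_\mathbb{F}$ is small and then $\epsilon$ sufficiently small relative to $r / \sup_T \|L_T^{-1}\|$. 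Likewise, for $\Phi_1, \Phi_2 \in \mathbb{X}_r$,
\begin{align*}
\|K(\Phi_1) - K(\Phi_2)\|_\mathbb{E} \leq \|L^{-1}\| \sup_{\Psi \in \mathbb{B}^\mathbb{E}_r(\mathbb{O})} \|DN[\Psi]\|_{\mathcal{L}(\mathbb{E},\mathbb{F})}\,\|\Phi_1 - \Phi_2\|_\mathbb{E} \leq \tfrac{1}{4}\|\Phi_1 - \Phi_2\|_\mathbb{E},
\end{align*}
so $K\colon \mathbb{X}_r \to \mathbb{X}_r$ is a strict contraction. The Banach fixed-point theorem produces the unique $\Phi \in \mathbb{X}_r$ with $L\Phi = (N(\Phi),\Phi_0)$.

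The main obstacle, as in the MCF case, is not the fixed-point step itself but the uniform-in-$T$ control on $\|L_T^{-1}\|$ and on the Fr\'echet derivatives of $N_T$, which is precisely the content of Lemmas \ref{lem:TechnicalPreliminariesWillmore} and \ref{lem:UniformlyWillmore}; once these are granted, the fact that $DN[\mathbb{O}]$ vanishes identically makes the contraction argument cleaner than in the MCF setting, where an additional $T^{1/q}$ factor had to be extracted from a nonlocal term.
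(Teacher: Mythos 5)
Your proposal is correct and follows essentially the same route as the paper's proof: the same fixed-point map $K(\Phi) = L^{-1}(N(\Phi),\Phi_0)$ on $\mathbb{X}_r$, the same identification of the compatibility condition $\mathcal{B}_1(D)u_0 = N^{(3)}(\Phi_0)$ with the hypothesis $H_\Gamma(u_0)=0$, the same use of Lemmas \ref{lem:TechnicalPreliminariesWillmore} and \ref{lem:UniformlyWillmore} for the $T$-uniform smallness of $DN$ and of $\norm{N(\mathbb{O})}_{\mathbb{F}}$, and the same self-mapping and contraction estimates (differing only in the choice of constants $\tfrac14$ versus $\tfrac12$). Your closing observation that the vanishing of $DN[\mathbb{O}]$ removes the $T^{1/q}$ extraction needed for the nonlocal term in the MCF case matches the paper's reasoning exactly.
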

\begin{proof}
We set $\mathbb{X}_r := \left\{\Phi \in \mathbb{B}^\mathbb{E}_r(\mathbb{O}) \left| \ \Phi(0) = \Phi_0\right.\right\}$. The equation $L \Phi = (N(\Phi),\Phi_0)$ is equivalent to the fixed point problem $K(\Phi) = \Phi$, where
\begin{align*}
	K(\Phi) := L^{-1} (N(\Phi),\Phi_0) \qquad \forall \, \Phi \in \mathbb{B}^\mathbb{E}_r(\mathbb{O}).
\end{align*}
For the invertibility of $L$ we have to make sure that $(N(\Phi),\Phi_0) \in \mathbb{F}_0$, which means
\begin{align*}
	\mathcal{B}_1(D) u_0 = N^{(3)}(\Phi_0) = H_\Gamma(u_0) + \mathcal{B}_1(D) u_0.
\end{align*}
This is equivalent to $H_\Gamma(u_0) = 0$ on $\d \Gamma^*$. \\
Due to Lemma \ref{lem:UniformlyWillmore}(iv) and Lemma \ref{lem:TechnicalPreliminariesWillmore} we can choose $r > 0$ independent of $T$ such that
\begin{align*}
	\sup_{\Psi \in \mathbb{B}^\mathbb{E}_r(\mathbb{O})} \norm{DN[\Psi]}_{\mathcal{L}(\mathbb{E};\mathbb{F})} \leq \frac{1}{2 \sup_{T \in [0,T_0]}\limits \norm{L^{-1}}}.
\end{align*}
Taking a look at $\norm{N(\mathbb{O})}_\mathbb{F}$ we see
\begin{align*}
	\norm{N(\mathbb{O})}_\mathbb{F} & = T^{\frac{1}{p}} \norm{\Delta_{\Gamma^*} H_{\Gamma^*} + \frac{1}{2} H_{\Gamma^*} \left(H_{\Gamma^*}^2 - 4 K_{\Gamma^*}\right)}_{L_p(\Gamma^*;\R)} \\
		& + T^{\frac{1}{p}} \norm{\frac{1}{2} \sin(\alpha) (\nabla_{\Gamma^*} H_{\Gamma^*} \cdot n_{\d \Gamma^*}) + a + b \kappa_{\d D^*}}_{W^{1 - \frac{1}{p}}_p(\d \Gamma^*; \R)} + T^{\frac{1}{p}} \norm{H_{\Gamma^*}}_{W^{2 - \frac{1}{p}}_p(\d \Gamma^*; \R)},
\end{align*}
because $\Delta_{\Gamma^*} H_{\Gamma^*}, H_{\Gamma^*}, K_{\Gamma^*}, a, \kappa_{\d D^*}$ and $n_{\d \Gamma^*}$ are time-independent. Hence $\norm{N(\mathbb{O})}_\mathbb{F} \xrightarrow[T \rightarrow 0]{} 0$ and for a sufficiently small time interval $[0,T]$ we get $\norm{N(\mathbb{O})}_\mathbb{F} \leq \epsilon$. We use this fact in the estimate
\begin{align*}
	\norm{K(\Phi)}_\mathbb{E} & \leq \norm{L^{-1}} \left(\norm{N(\Phi)}_\mathbb{F} + \norm{\Phi_0}_\mathbb{I}\right)
		  \leq \norm{L^{-1}} \left(\norm{N(\Phi) - N(\mathbb{O})}_\mathbb{F} + \norm{N(\mathbb{O})}_\mathbb{F} + \norm{\Phi_0}_\mathbb{I}\right) \\
		& \leq \norm{L^{-1}} \left(\sup_{\Psi \in \mathbb{B}^\mathbb{E}_r(\mathbb{O})} \norm{DN[\Psi]}_{\mathcal{L}(\mathbb{E};\mathbb{F})} \norm{\Phi}_\mathbb{E} + \norm{N(\mathbb{O})}_\mathbb{F} + \norm{\Phi_0}_\mathbb{I}\right) \\
		& \leq \frac{1}{2} \norm{\Phi}_\mathbb{E} + \norm{L^{-1}} \norm{N(\mathbb{O})}_\mathbb{F} + \norm{L^{-1}} \norm{\Phi_0}_\mathbb{I} \leq \frac{r}{2} + 2 \norm{L^{-1}} \epsilon
\end{align*}
for every $\Phi \in \mathbb{X}_r$. By choosing
\begin{align*}
	\epsilon(r) \leq \frac{r}{4 \sup_{T \in [0,T_0]}\limits \norm{L^{-1}}}
\end{align*}
we get $\norm{K(\Phi)} \leq \frac{r}{2} + \frac{r}{2} = r$, i.e., $K(\mathbb{X}_r) \subseteq \mathbb{X}_r$. To see that $K$ is contractive, we observe that for all $\Phi_1, \Phi_2 \in \mathbb{X}_r$ the following holds
\begin{align*}
	\norm{K(\Phi_1) - K(\Phi_2)}_\mathbb{E} & \leq \norm{L^{-1}} \norm{N(\Phi_1) - N(\Phi_2)}_\mathbb{F} \\
		& \leq \norm{L^{-1}} \sup_{\Psi \in \mathbb{B}^\mathbb{E}_r(\mathbb{O})} \norm{DN[\Psi]}_{\mathcal{L}(\mathbb{E};\mathbb{F})} \norm{\Phi_1 - \Phi_2}_\mathbb{E} \leq \frac{1}{2} \norm{\Phi_1 - \Phi_2}_\mathbb{E}.
\end{align*}
Hence $K: \mathbb{X}_r \to \mathbb{X}_r$ is a contraction and the assertion follows from the contraction mapping principle.
\end{proof}

Transforming this statement to our original situation in terms of $\rho$ instead of $\Phi$ we end up with

\begin{thm}\label{thm:ShortTimeExistenceWillmore}
Let $T > 0$ be sufficiently small and $6 < p < \infty$. Then there exists an $\epsilon > 0$ such that for each $\rho_0 \in \pi Z_u$ with $\rho_0|_{\d \Gamma^*} \in \pi Z_\varrho$, $\norm{\rho_0}_{\pi Z_u} + \norm{\rho_0|_{\d \Gamma^*}}_{\pi Z_\varrho} < \epsilon$ and $H_\Gamma(\rho_0) = 0$ on $\d \Gamma^*$ there exists a unique solution $\rho \in Z_u$ with $\rho|_{\d \Gamma^*} \in Z_\varrho$ of the system
\begin{align*}
	V_\Gamma(\rho(t)) &= -\Delta_\Gamma H_\Gamma(\rho(t)) - \frac{1}{2} H_\Gamma(\rho(t)) \left(H_\Gamma(\rho(t))^2 - 4 K_\Gamma(\rho(t))\right) & &\text{in } [0,T] \times \Gamma^*, \\
	v_{\d D}(\rho(t)) &= \frac{1}{2} \sin(\alpha) (\nabla_\Gamma H_\Gamma(\rho(t)) \cdot n_{\d \Gamma}(\rho(t))) + a + b \kappa_{\d D}(\rho(t)) & &\text{on } [0,T] \times \d \Gamma^*, \\
	0 &= H_\Gamma(\rho(t)) & &\text{on } [0,T] \times \d \Gamma^*, \\
	\rho(0) &= \rho_0 & &\text{in } \Gamma^*.
\end{align*}
\end{thm}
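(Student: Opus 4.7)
The proof is essentially a direct translation of Lemma \ref{lem:FixpointNonlinearWillmore} back to the PDE formulation in terms of $\rho$, so the plan is purely bookkeeping: all the analytic work has already been done. Given $\rho_0$ satisfying the hypotheses of the theorem, I set $u_0 := \rho_0$, $\varrho_0 := \rho_0|_{\d \Gamma^*}$, and $\Phi_0 := (u_0, \varrho_0)$. The trace compatibility $u_0|_{\d \Gamma^*} = \varrho_0$ built into $\mathbb{I}$ holds by construction, the smallness assumption $\norm{\rho_0}_{\pi Z_u} + \norm{\rho_0|_{\d \Gamma^*}}_{\pi Z_\varrho} < \epsilon$ gives $\norm{\Phi_0}_\mathbb{I} < \epsilon$, and the extra compatibility condition $H_\Gamma(\rho_0) = 0$ on $\d \Gamma^*$ is exactly the additional hypothesis required by Lemma \ref{lem:FixpointNonlinearWillmore}.

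Applying Lemma \ref{lem:FixpointNonlinearWillmore} then yields a unique $\Phi = (u, \varrho) \in \mathbb{E}$ satisfying $L \Phi = (N(\Phi), \Phi_0)$. Define $\rho := u \in Z_u$. The trace constraint inherited from the definition of $\mathbb{E}$ (via the domain of the operator $A$) forces $\rho|_{\d \Gamma^*} = \varrho \in Z_\varrho$, and the condition $\Phi(0) = \Phi_0$ translates to $\rho(0) = \rho_0$.

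It remains to unravel the four components of $L \Phi = N(\Phi)$. In the first component, the terms $\frac{d}{dt} u + \mathcal{A}(D) u$ appear on both sides and cancel, leaving precisely the Willmore evolution
\begin{align*}
V_\Gamma(\rho(t)) = -\Delta_\Gamma H_\Gamma(\rho(t)) - \tfrac{1}{2} H_\Gamma(\rho(t)) \bigl(H_\Gamma(\rho(t))^2 - 4 K_\Gamma(\rho(t))\bigr).
\end{align*}
The same cancellation in the second component recovers the dynamic boundary condition, and in the third component it recovers $H_\Gamma(\rho(t)) = 0$ on $\d \Gamma^*$. The fourth (vanishing) component is automatic, and the identification $\varrho = \rho|_{\d \Gamma^*}$ is built in. Uniqueness descends directly from the uniqueness of $\Phi$ in the fixed-point argument.

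I do not expect any genuine obstacle here: the theorem is the translation step, and the hard work lies in the preceding lemmas---the maximal regularity result of Theorem \ref{thm:LocalExistenceWillmoreGeneral} (via the Lopatinskii--Shapiro verification), the algebra and embedding properties in Lemmas \ref{lem:EmbeddingsWillmore}--\ref{lem:AlgebraWillmore}, the Fr\'echet smoothness $N \in C^1$ with $DN[\mathbb{O}] = \mathbb{O}$ from Lemma \ref{lem:TechnicalPreliminariesWillmore}, and the $T$-uniform bounds of Lemma \ref{lem:UniformlyWillmore} that make the contraction argument of Lemma \ref{lem:FixpointNonlinearWillmore} go through. The only subtlety worth double-checking is that the boundary condition $H_\Gamma(\rho_0)=0$ really is the correct form of the range compatibility $\mathcal{B}_1(D) u_0 = g_1(0) = N^{(3)}(\Phi_0)$ required for $(N(\Phi),\Phi_0) \in \mathbb{F}_0 \times \mathbb{I}$; this was verified inside Lemma \ref{lem:FixpointNonlinearWillmore}, and no further compatibility arises because $g_0$ and the fourth component impose no extra trace conditions on the initial data.
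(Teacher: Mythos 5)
Your proposal is correct and matches the paper exactly: the theorem is stated as the direct translation of Lemma \ref{lem:FixpointNonlinearWillmore} back to the $\rho$-formulation, with the cancellation of the linear terms $\frac{d}{dt}u + \mathcal{A}(D)u$ in each component of $L\Phi = (N(\Phi),\Phi_0)$ recovering the nonlinear system, and the compatibility $H_\Gamma(\rho_0)=0$ on $\d\Gamma^*$ being precisely the condition $\mathcal{B}_1(D)u_0 = g_1(0)$ needed for $L$ to be invertible onto $\mathbb{F}_0\times\mathbb{I}$. No gap.
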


Note that Remark \ref{rem:Smallness} is true in the situation of the Willmore flow as well.

\printbibliography												

\end{document}